\newtheorem{theorem}{Theorem}[section]
\newtheorem{lemma}[theorem]{Lemma}
\theoremstyle{definition}
\newtheorem{definition}[theorem]{Definition}
\newtheorem{proposition}[theorem]{Proposition}
\newtheorem{corollary}[theorem]{Corollary}
\theoremstyle{remark}
\newtheorem{remark}[theorem]{Remark}
\numberwithin{equation}{section}
\begin{document}
\title{Topological dynamics on finite directed graphs}
\date{\today}
\author{Jos\'{e} Ayala}
\address{FIA, Universidad Arturo Prat, Iquique, Chile}
\email{jayalhoff@gmail.com}
\author{Wolfgang Kliemann}
\address{Department of Mathematics, Iowa State University \\Ames, Iowa 50011, U.S.A.}
\email{kliemann@iastate.edu}
\subjclass[2010]{Primary 54H20, 05C20, 60J10; Secondary 37C70, 60J22 }
\keywords{Morse decomposition, directed graphs, Markov chians}
\maketitle

\begin{abstract} In this work we establish that finite directed graphs give rise to semiflows on the power set of their nodes. We analyze the topological dynamics for semiflows on finite directed graphs by characterizing Morse decompositions, recurrence behavior and attractor-repeller pairs under weaker assumptions. As is expected, the discrete metric plays an important role in our constructions and their consequences. The connections between the semiflow, graph theory and Markov chains are here explored. We lay the foundation for a dynamical systems
approach to hybrid systems with Markov chain type perturbations.
\end{abstract}


\section{Introduction}

The mathematical theory of dynamical systems analyzes, from an
axiomatic point of view, the common features of many models that
describe the behavior of systems in time. In its abstract form, a
dynamical system is given by a time set ${T}$ (with semigroup
operation $\circ$), a state space $M$, and a map
$\Phi:{T}\times M\rightarrow M$ that satisfies (i)
$\Phi(0,x)=x$ for all $x\in M$, describing the initial value, and
(ii) $\Phi(t\circ s,x)=\Phi(t,\Phi(s,x))$ for all $t,s\in{T}$
and $x\in M$. At the heart of the theory of dynamical systems is the
study of systems behavior when $t\rightarrow\pm\infty$ (qualitative behaviour), as well the change
in behaviour under variation of parameters (bifurcation theory) \cite{AH06, CONLEY78, Devaney, KH95, Robinson}.

In this work we consider dynamical systems on finite directed graphs without multiple edges. We analyze
their communication structure, i.e., equivalence classes of vertices
that can be reached mutually via sequences of edges. This leads to the set of {\it communicating classes} $\mathcal{C}$ of a graph and a reachability order $\preceq$ on $\mathcal{C}$. The key concept is that of an $L-$graph corresponding to graphs
for which each vertex has out-degree $\geq1$. As it turns out, these
are exactly the graphs for which the $\omega-$limit sets of the
associated semiflow are nonempty. To each graph $G=(V,E)$, where $V$ is the set of vertices, $\mathcal{P}(V)$ the power set of $V$,
and $E\subset V\times V$ the set of edges, we associate a semiflow
$\Phi
_{G}:\mathbb{N}\times\mathcal{P}(V)\rightarrow\mathcal{P}(V)$.
This semiflow is studied from the point of view of qualitative
behavior of dynamical systems. We adapt the concepts of
$\omega-$limit sets, (positive) invariance, recurrence, Morse
decompositions, attractors and attractor-repeller pairs to
$\Phi_{G}$ and prove characterizations equivalent to those in
\cite{AH06}. As it turns out, the finest Morse decomposition of the semiflow 
$\Phi_{G}$ corresponds to the decomposition of $G$ into the communicating classes $\mathcal C$. In addition, the order on the
communicating classes is equivalent to the order that accompanies a
Morse decomposition. Moreover, the connected components of the
recurrent set of $\Phi_{G}$ are exactly the (finest) Morse sets of
$\Phi_{G}$, i.e., the communicating classes of $G$. 

\noindent Graphs $G=(V,E)$ (and certain aspects of Markov chains)
are often studied using the adjacency matrix $A_{G}$. The products of $A_{G}$ describe the paths, and hence the
communication structure of $G$. We construct a semiflow
$\Psi_{A}:\mathbb{N}\times\mathcal{Q}^{d}\rightarrow\mathcal{Q}^{d}$
(where $\mathcal{Q}^{d}$ is the vertex set of the unit cube in $\mathbb{R}%
^{d}$) that is equivalent to the semiflow $\Phi_{G}$ defined on $\mathcal{P}%
(V)$, using Boolean matrix multiplication. This point of view is
somewhat different from the standard approach that uses regular
matrix multiplication and that does not lead to an equivalent
semiflow. The equivalence allows us to interpret all results
obtained for $\Phi_{G}$ in terms of certain linear iterated function
systems.

We subsequently apply the results obtained
for graphs and their semiflows to the study of general finite Markov
chains. Our results are presented in the form of a ``3-language
dictionary'': Each key concept for Markov chains is ``translated'' into
graph language and into semiflow language. This dictionary is
contained in Facts 1-13 and Fact 15. Note that our concepts and
results in terms of global dynamics only deal with the communication
structure of graphs (or the qualitative behavior of semiflows) and
hence they do not contain the probabilistic information of a Markov
chain. But, it turns out that a simple result on the geometric decay
of certain probabilities (Lemma \ref{leaking}) is sufficient
to recapture all the relevant probabilistic information. Facts 14
and 16-18 describe the long term behavior of general Markov chains
and introduce the concept of multistable states. Our presentation unifies many related concepts concepts and shows which
structural (deterministic, graph theoretic, semiflow) properties and
which probabilistic properties are really needed to analyze Markov
chains. Recall that a hybrid system is a dynamical system exhibiting the interaction between discrete and continuous phenomena. In this work we lay the foundation for a dynamical systems approach to hybrid systems with Markov chain type perturbations.


\section{Orbit decomposition of finite directed graphs}\label{secgraph}

We start by presenting basic definitions and notations used along this work. Our first goal is to produce two simple graph decompositions motivated by concepts form dynamical systems. In contrast to the standard convention, throughout this work, the {\it vertex communication is not in necessarily an equivalence relation}. This simple observation will lead to interesting phenomena. We define communicating sets and classes, and a partial order between communicating classes is presented. We introduce a necessary condition to meaningfully study asymptotic behaviour via the concept of orbit. Some of the results presented in this section are somehow elementary, we prefer to include such results for the sake of exposition.  



\begin{remark}\label{nomult}Throughout this note whenever we refer to a {\it graph} $G=(V,E)$ we mean a {\it finite directed graph} when $V$ is the set of vertices in $G$ and $E$ is the set of edges in $G$ such that {\it no multiple edges between any two elements in $V$ are allowed}.
\end{remark}

\subsection{Orbits and communicating classes}  \label{orbits&cc} The communication structure in graphs is a central concept in this work. In this section we introduce the concepts of communicating sets and communicating classes based on the idea of orbits. 

Let $G$ be a graph. An edge from the vertex $i$ to the vertex $j$ is denoted $(i,j)\in E$. A path in $G$ correspond to a sequence of vertices agree with the incidence and direction in $G$ and is denoted by $\langle i_0i_1\ldots i_n \rangle$. Sometimes we write $i\in \gamma$ to specify that the vertex $i$ belongs to the path $\gamma$. We define the set:
\begin{equation}
\Gamma^{n}=\{\gamma:\ell(\gamma)=n,\,\ n\in\mathbb{N}\}\label{defpathset}%
\end{equation}
as the set of all paths $\gamma$ of {\it length} $\ell(\gamma)=n$, with $\Gamma^{0}=V$. We can specify vertices in a path $\gamma$ in terms of the projection maps
$\pi_{p}$ for
$0\leq p\leq n$:%
\[
\pi_{p}:\Gamma^{n}\rightarrow V\text{, }\ \pi_{p}(\gamma)=i_{p}%
\]
where $i_{p}$\ is the $p^{th}$ vertex in $\gamma$. In other words,
\[
\gamma=\langle
\pi_{0}(\gamma)\ldots \pi_{p}(\gamma)\ldots \pi_{n}(\gamma
) \rangle\text{.}%
\]

A subpath $\gamma^{\prime}$ of $\gamma$ is a subsequence
of $\gamma$ of consecutive edges (or vertices) belonging to
$\gamma$. In particular, any edge of a path is a subpath of length
one. Composition of paths will play a role in many
of our proofs.

\begin{definition}
For two paths $\gamma_{1}$ and $\gamma_{2}$ with
$\ell(\gamma_{1})=m$ and $\ell(\gamma_{2})=n$ such that
$\gamma_{1}=\langle i\ldots j \rangle$ and $\gamma_{2}=\langle j \ldots k
\rangle$ we define the concatenation of the paths
as%
\[
\left\langle \gamma_{1},\gamma_{2}\right\rangle =\left\langle
i \ldots j\right\rangle \ast\left\langle j \ldots k\right\rangle =\left\langle
i \ldots k\right\rangle
\]
with $\ell(\langle i \ldots k \rangle)=m+n$ and
$\pi_{m}(\langle  i \ldots k \rangle)=j$.
\end{definition}


\begin{definition}\label{defcs}A vertex $i\in V$ has access to a vertex $j\in V$ if there exists a path of
length $\geq1$ from $i$ to $j$. We say that the vertices $i$ and $j$
{\it communicate}, written as $i\sim j$, if they have mutual access. A subset $U$ of
$V$ is a {\it communicating set} if any two vertices of $U$ communicate.
\end{definition}

%

\begin{remark} Note that the relation $\sim$ is reflexive iff for
all $i\in V$ there exists a path $\gamma_{ii}=\left\langle i \ldots i\right\rangle
$, a property that does not always hold. In general we have that: The vertex communication relation $\sim$ is symmetric and
transitive but, in general, it lacks the reflexivity property.
\end{remark}


We define a
smaller set on which reflexivity holds. Denote the union of all
communicating sets by:
\[
V_{c}=\{\,\,i\in V:\,\,i\sim j\text{ \ for some \ }j\in V\,\,\}\text{.}%
\]

For $i\in V_{c}$ we define $[i]=\{j\in V$, $j\sim i\}$. Then
$V_{c}/\hspace{-.1cm}\sim=\{[i]$, $i\in V_{c}\}$ is a partition of $V_{c}$, that is,
$[i]\cap\lbrack j]=\varnothing$ for $j\notin\lbrack i]$, and $\cup\lbrack
i]=V_{c} $.

\begin{definition}
\label{defcc}Let $G$ be a graph with communication relation $\sim$. Each set
$[i]$ for $i\in V_{c}$ is called a {\it communicating class} of $G$. We denote the
set $V_{c}/\hspace{-.15cm}\sim$ of all communicating classes by $\mathcal{C}$.
\end{definition}

Note that by definition, communicating classes are communicating
sets. They are characterised by their maximality.

\begin{proposition} Communicating classes are maximal communicating sets with respect to the set
inclusion. Vice versa, maximal communicating sets are communicating classes.
\end{proposition}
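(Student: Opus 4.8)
The plan is to prove the two inclusions separately, and the single tool I would lean on throughout is the observation already recorded in the excerpt: although $\sim$ fails reflexivity on all of $V$, it \emph{is} reflexive on $V_c$. Indeed, $i\in V_c$ means $i\sim j$ for some $j$, whence $j\sim i$ by symmetry and $i\sim i$ by transitivity. Consequently $i\in[i]$ for every $i\in V_c$, and on $V_c$ the relation $\sim$ is a genuine equivalence. I would state this normalization first, since both directions hinge on it.

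For the forward direction I would first confirm that a communicating class $[i]$ genuinely is a communicating set: given $j,k\in[i]$ we have $j\sim i$ and $k\sim i$, so symmetry yields $i\sim k$ and transitivity yields $j\sim k$. To establish maximality, I would suppose $U$ is a communicating set with $[i]\subseteq U$ and argue that equality is forced. Since $i\in V_c$ we have $i\in[i]\subseteq U$, so for every $u\in U$ the fact that $U$ is a communicating set gives $u\sim i$, hence $u\in[i]$; therefore $U=[i]$ and no communicating set properly contains $[i]$.

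For the converse I would take a maximal communicating set $U$, fix $i\in U$, and verify that $i\in V_c$: because $U$ is a communicating set, $i$ communicates with some vertex of $U$, so $i\sim j$ for some $j$, giving $i\in V_c$ and $i\in[i]$. Every $u\in U$ then satisfies $u\sim i$, so $U\subseteq[i]$; since $[i]$ is itself a communicating set (by the forward direction) and $U$ is maximal, I obtain $U=[i]$, exhibiting $U$ as a communicating class.

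The main delicacy, and the only place where the failure of reflexivity could bite, is the edge case of a one-element set $U=\{v\}$: here the clause ``any two vertices communicate'' holds vacuously, so one must read the definition so that a singleton qualifies as a communicating set precisely when $v\sim v$ (equivalently $v\in V_c$). Otherwise a loop-free isolated vertex would furnish a maximal communicating set that is \emph{not} a communicating class, breaking the converse. Under this reading the argument above runs uniformly, since membership in $V_c$ is exactly what guarantees $i\in[i]$ and makes the two notions coincide.
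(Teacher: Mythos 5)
Your proof is correct, and it takes a genuinely different (and in places more complete) route than the paper's. The paper proves maximality by contradiction with explicit path concatenations: from $i\in[j]$, $k\notin[j]$, $i\sim k$ it produces paths $\gamma_{1}=\langle i\ldots j\rangle$, $\gamma_{2}=\langle j\ldots i\rangle$, $\gamma_{3}=\langle i\ldots k\rangle$, $\gamma_{4}=\langle k\ldots i\rangle$ and concatenates $\langle\gamma_{4},\gamma_{1}\rangle$ and $\langle\gamma_{2},\gamma_{3}\rangle$ to force $k\sim j$ --- which is really an inline re-derivation of the transitivity of $\sim$ that the paper has already recorded; you instead invoke symmetry and transitivity abstractly, after first isolating the key normalization that $\sim$ is reflexive on $V_{c}$ (so $i\in[i]$ and $\sim$ is a genuine equivalence there). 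That normalization is exactly what licenses the equivalence-class bookkeeping, and stating it up front is the right move, since the non-reflexivity of $\sim$ is the one place the standard argument could silently fail. Your argument is also direct rather than by contradiction, and --- importantly --- you actually prove the converse, which the paper dismisses with ``follows by definition of communicating classes.'' In doing so you surface the real definitional subtlety: a singleton $\{v\}$ must count as a communicating set only when $v\sim v$, i.e.\ $v\in V_{c}$, since otherwise any transitory vertex (a central object in this paper) would yield a maximal communicating set that is not a communicating class, falsifying the converse as stated. The only residue you leave untreated is the equally degenerate case $U=\varnothing$ (your ``fix $i\in U$'' tacitly assumes $U$ nonempty; in a graph with no communication at all, $\varnothing$ is vacuously a maximal communicating set), but this is resolved by the same implicit nonemptiness convention you already invoke for singletons. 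In sum: the paper's proof stays self-contained at the level of raw paths but omits half the statement; yours is shorter, reuses the established relational structure, covers both directions, and makes explicit the reading of the definition under which the proposition is actually true.
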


\begin{proof}
Assume that $[j]\in\mathcal{C}$ is not maximal, then there exist $i\in
[j]$, and $k\notin[j]$ with $i\sim k$ i.e.,
$[j]$ is not maximal. Since $i\in[j]$ there exist paths
$\gamma_{1}=\langle i \ldots j \rangle$ and $\gamma_{2}=\langle j \ldots i \rangle$.
Moreover, since $i$ and $k$ communicate we have paths $\gamma_{3}%
=\langle i \ldots k \rangle$ and $\gamma_{4}=\langle k \ldots i \rangle$. The
concatenations $\langle \gamma_{4},\gamma_{1}\rangle$ and $\langle
 \gamma_{2},\gamma_{3} \rangle$ imply $k\sim j$ and therefore $k\in
[j]$, which leads to a contradiction, proving the first claim of
the proposition. The second part follows by definition of communicating classes.
\end{proof}

\begin{definition}
\label{deforbit} The positive and negative orbit of a vertex $i\in V$
are
defined as:%
\[
\mathcal{O}^{+}(i)=\{j\in\ V:\,\,\,\
\exists\,n\geq1,\,\,\,\exists\,\gamma
\in\Gamma^{n}\text{ \ such that \ }\pi_{0}(\gamma)=i,\,\,\,\pi_{n}%
(\gamma)=j\}\text{,}%
\]%
\[
\mathcal{O}^{-}(i)=\{j\in\ V:\,\,\,\
\exists\,n\geq1,\,\,\,\exists\,\gamma
\in\Gamma^{n}\,\,\text{such that \ }\pi_{0}(\gamma)=j,\,\,\,\pi_{n}%
(\gamma)=i\}\text{,}%
\]
where $\pi_{0}(\gamma)$ and $\pi_{n}(\gamma)$ represent the initial
and the final vertices in $\gamma$.
\end{definition}

Our first result shows that communicating classes can be
characterised using orbits of vertices.

\begin{theorem}
\label{cc=orbit}Every communicating class $C\in\mathcal{C}$ is of the form
\[
C=\mathcal{O}^{+}(i)\cap\mathcal{O}^{-}(i)
\]
for some $i\in V$. Vice versa, if $C:=\mathcal{O}^{+}(i)\cap\mathcal{O}%
^{-}(i)\neq\varnothing$ for some $i\in V$, then $C$ is a communicating class.
\end{theorem}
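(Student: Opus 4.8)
The plan is to reduce both assertions to a single identity that simply unwinds the definitions, namely that for \emph{every} vertex $i\in V$
\[
\mathcal{O}^{+}(i)\cap\mathcal{O}^{-}(i)=\{\,j\in V:\,i\sim j\,\}.
\]
First I would verify this identity. By Definition \ref{deforbit}, $j\in\mathcal{O}^{+}(i)$ means precisely that there is a path of length $\geq 1$ from $i$ to $j$, i.e. $i$ has access to $j$ in the sense of Definition \ref{defcs}; symmetrically $j\in\mathcal{O}^{-}(i)$ means $j$ has access to $i$. Hence $j$ lies in the intersection exactly when $i$ and $j$ have mutual access, which is the defining condition $i\sim j$. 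Note that this uses no reflexivity and holds for arbitrary $i$, whether or not $i\in V_{c}$.

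For the forward direction, let $C\in\mathcal{C}$. By Definition \ref{defcc} we may write $C=[i]$ for some $i\in V_{c}$, and by definition $[i]=\{\,j:\,j\sim i\,\}$, which by symmetry of $\sim$ equals $\{\,j:\,i\sim j\,\}$. Applying the identity gives $C=\mathcal{O}^{+}(i)\cap\mathcal{O}^{-}(i)$, as required.

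For the converse, suppose $C:=\mathcal{O}^{+}(i)\cap\mathcal{O}^{-}(i)\neq\varnothing$ for some $i\in V$. Choosing any $j\in C$, the identity yields $i\sim j$, so that $i\in V_{c}$ and $[i]$ is a genuine communicating class. Invoking the identity once more gives $C=\{\,j:\,i\sim j\,\}=[i]\in\mathcal{C}$.

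The statement is elementary, so there is no serious obstacle; the only point requiring care is the failure of reflexivity of $\sim$ noted in the Remark. Because of it one cannot assume a priori that $[i]$ is defined for an arbitrary $i\in V$, nor that $i\in C$. The nonemptiness hypothesis in the converse is exactly what repairs this: it forces $i\in V_{c}$ and hence makes $[i]$ meaningful. (One may also observe, for consistency, that when $i\in V_{c}$ this nonemptiness is automatic, since $i\sim j$ for some $j$ produces a closed path through $i$ by concatenation, placing $i$ itself in $\mathcal{O}^{+}(i)\cap\mathcal{O}^{-}(i)$.)
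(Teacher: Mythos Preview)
Your proof is correct and follows essentially the same approach as the paper: both arguments reduce to the observation that $j\in\mathcal{O}^{+}(i)\cap\mathcal{O}^{-}(i)$ if and only if $i\sim j$, and then identify the intersection with $[i]$. Your version is somewhat cleaner in that you isolate this identity once and reuse it, whereas the paper verifies the two inclusions separately in each direction, but the underlying content is the same.
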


\begin{proof} Let $C$ be a communicating class with $i\in C$. Then since $i\sim
i$, we have that $C$ contains a path $\gamma=\langle i \ldots i \rangle$ and
hence it follows that $i\in\mathcal{O}^{+}(i)\cap\mathcal{O}^{-}(i)$, that is,
 $C\subset\mathcal{O}^{+}(i)\cap\mathcal{O}^{-}(i)$. Now consider
$j\in\mathcal{O}^{+}(i)\cap\mathcal{O}^{-}(i)$ for some $i\in V$. Then there
exist a path $\gamma$ and $n\geq1$ such that $\pi_{0}(\gamma)=i$ and $\pi
_{n}(\gamma)=j$, as well as a path $\gamma^{\prime}$ such that $\pi_{0}%
(\gamma^{\prime})=j$ and $\pi_{m}(\gamma^{\prime})=i$, for some $m\geq1$. This
immediately implies $i\sim j$ for every element $j\in\mathcal{O}^{+}%
(i)\cap\mathcal{O}^{-}(i)$, and therefore $\mathcal{O}^{+}(i)\cap
\mathcal{O}^{-}(i)\subset\lbrack i]$. Conversely, assume that $\mathcal{O}^{+}(i)\cap\mathcal{O}^{-}(i)\neq
\varnothing$ for some $i\in V$. We have to show that $\mathcal{O}^{+}%
(i)\cap\mathcal{O}^{-}(i)$ is a communicating class, i.e. $\mathcal{O}%
^{+}(i)\cap\mathcal{O}^{-}(i)=[i]$. Take $j\in\mathcal{O}^{+}(i)\cap
\mathcal{O}^{-}(i)$, then we argue as before that $j\sim i$ and hence
$j\in\lbrack i]$. On the other hand, if $j\notin\mathcal{O}^{+}(i)\cap
\mathcal{O}^{-}(i)$, then $j\notin\mathcal{O}^{+}(i)$ or $j\notin
\mathcal{O}^{-}(i)$. In the first case there is no path from $i$ to $j$, in
the second case there is no path from $j$ to $i$. Any of these two statements
implies that $i\nsim j$, which completes the proof.
\end{proof}

\begin{definition}
\label{deftransitory} A transitory vertex is a vertex that
does not belong to a communicating class.
\end{definition}

Note that by Theorem \ref{cc=orbit} transitory vertices are exactly those
vertices $i\in V$ for which $\mathcal{O}^{+}(i)\cap\mathcal{O}^{-}%
(i)=\varnothing$. This also means that $\mathcal{O}^{+}(i)\cap\mathcal{O}%
^{-}(i)\neq\varnothing$ iff $i\in V_{c}$ i.e., exactly these vertices {\it anchor}
communicating classes. In addition, the statements in Theorem \ref{cc=orbit} take on this simple form
because we have defined orbits in Definition \ref{deforbit} as starting with
paths of length 1, not 0. If we include paths of length 0 in an orbit, then it
always holds that $i\in\mathcal{O}^{+}(i)\cap\mathcal{O}^{-}(i)$. This trivial
situation then needs to be excluded in Theorem \ref{cc=orbit}. Similarly, we
have defined communicating classes in Definition \ref{defcc} using mutual
access (a vertex $i\in V$ satisfies $i\in V_{c}$ if there exists a path
of length $\geq1$ from $i$ to $i$). This avoids the triviality that each vertex
communicates with itself. Note that for systems on continuous state spaces one
needs separate non-triviality conditions, such as the existence of an infinite
path within a  {\it communicating class} and a condition on the richness of the
orbits, see the discussion in \cite{CK00}, Chapter 3 for control systems.


\subsection{Communicating sets in $L-$graphs} We analyse communicating structure in graphs that admit limit behaviour. 

\begin{definition} \label{inout-deg}
In a directed graph $G$ the out-degree of a vertex $i\in
V$ is defined as the number of edges {\it going out of
the vertex $i$}
\[
O(i)=\#\,\{(i,j):\,\,\,(i,j)\in E\,\,\,\text{for some
}\,j\in V\}.
\]
The in-degree of a vertex $i$ corresponds to the number
of edges {\it coming into $i$}
\[
I(i)=\#\,\{(j,i):\,\,\,(j,i)\in E\,\,\,\text{for some
}\,j\in V\}.
\]
\end{definition}

\begin{definition}
\label{deflgraph} A graph is called an $L-$graph if every vertex has positive out-degree.
\end{definition}

From now on we only concentrate on a graph $G=(V,E)$ such that $O(i)\geq 1$ for all $i\in V$. $L-$graphs are needed to ensure the existence of various objects, such as communicating classes. Definition \ref{deflgraph} states a non-degeneracy condition for orbits in
a graph. We ensure the existence of communicating classes (with certain
additional properties). 


\begin{lemma}
\label{lpath}Each $L-$graph has paths of arbitrary length.
\end{lemma}

\begin{proof}
Let $G$ be an $L-$graph and $n\geq1$.\ Pick $i_{0}\in V$, then by definition
${O}(i_{0})\,\geq{1}$. This ensures the existence of $i_{1}\in V$ such
that $(i_{0},i_{1})$ is an incidence in $G$. Using the same argument we see
that there is a vertex $i_{2}\in V$ and an incidence $(i_{1},i_{2})$.
Continuing with this process up to step $n$ we infer the existence of a path%
\[
\gamma=\langle i_{0},(i_{0},i_{1}),i_{1},(i_{1},i_{2}), \ldots ,i_{n-2}%
,(i_{n-1},i_{n}),i_{n} \rangle
\]
or equivalently,%
\[
\gamma=\langle i_{0} \ldots i_{n} \rangle
\]
with $\ell(\gamma)=n$.
\end{proof}

\begin{definition}
A path $\gamma$ of length $\ell(\gamma)=n$, with $n\geq1$, is said to be a loop if
there exists a vertex $i\in\gamma$ such that $\pi_{0}(\gamma)=\pi_{n}%
(\gamma)=i$.
\end{definition}

\begin{lemma}
\label{pathloop}In a graph $G$ with $d$ vertices any path $\gamma$ of length
$\ell(\gamma)=n$, with $n\geq d$ contains a loop.
\end{lemma}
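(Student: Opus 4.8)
The plan is to use the pigeonhole principle on the vertices appearing in the path $\gamma$. Let $G$ have vertex set $V$ with $\#V = d$, and let $\gamma = \langle i_0 i_1 \ldots i_n \rangle$ be a path of length $\ell(\gamma) = n \geq d$. Then $\gamma$ visits the sequence of vertices $i_0, i_1, \ldots, i_n$, which consists of $n+1$ entries. Since $n \geq d$, we have $n + 1 \geq d + 1 > d = \#V$, so the number of (indexed) vertex occurrences in $\gamma$ strictly exceeds the total number of available distinct vertices.

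By the pigeonhole principle, at least two of these occurrences must be the same vertex: there exist indices $p$ and $q$ with $0 \leq p < q \leq n$ such that $\pi_p(\gamma) = \pi_q(\gamma) =: i$. First I would record that the subsequence $\gamma' = \langle i_p \, i_{p+1} \ldots i_q \rangle$ is a subpath of $\gamma$ in the sense of the earlier definition, since it is a sequence of consecutive vertices (and edges) of $\gamma$. Its length is $\ell(\gamma') = q - p \geq 1$ because $p < q$, so it is a genuine path of positive length.

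Finally I would verify that $\gamma'$ satisfies the definition of a loop: reindexing $\gamma'$ so that its own projections run from $0$ to $q-p$, we have that its initial vertex $\pi_0(\gamma') = i_p = i$ equals its terminal vertex $\pi_{q-p}(\gamma') = i_q = i$. Thus $\gamma'$ is a loop contained in $\gamma$, which is exactly what the statement asserts.

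The argument is essentially a clean pigeonhole count, so I do not anticipate a genuine obstacle; the only point requiring minor care is bookkeeping on indices — namely confirming that $n \geq d$ forces a repetition among the $n+1$ vertex slots (as opposed to $n+1 \geq d+1$ being needed), and that the extracted subpath indeed has length $\geq 1$ so that it qualifies as a loop under the paper's convention excluding length-zero paths. I would double-check that the count $n+1 > d$ follows from $n \geq d$, which it does, so the repetition is guaranteed.
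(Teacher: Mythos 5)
Your proof is correct and is essentially the same argument as the paper's: both rest on the pigeonhole observation that a path of length $n\geq d$ visits more vertex slots than there are vertices, so some vertex repeats, and the subpath between two occurrences is a loop of positive length. The paper merely packages this as a contradiction on the length-$d$ prefix $\langle i_0 \ldots i_{d-1}\rangle$, while you apply the pigeonhole count directly to all $n+1$ entries; the content is identical.
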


\begin{proof}
We consider a path $\gamma$ in $G$ such that $\gamma=\langle i_{0}i_{1}\ldots i_{d} \rangle$
with $\ell(\gamma)=d$. Assume that the subpath $\langle i_{0} \ldots i_{d-1}%
 \rangle$ contains no loop. Then all the vertices of
$\langle i_{0} \ldots i_{d-1} \rangle$ are distinct and hence $\{ i_{0}%
,\ldots,i_{d-1} \}=V$. Now $i_{d}\in V$\ implies that there exists $\alpha
\in\{0,\ldots,d-1\}$\ with $i_{d}=i_{\alpha}.$\ Hence the subpath $\langle
 i_{\alpha} \ldots i_{d} \rangle$ is a loop contained in $\gamma$.
\end{proof}

\noindent The next three lemmata explore the relationship between loops and
communicating classes, leading to the existence of communicating classes in
$L$-graphs.

\begin{lemma}
\label{loopcc}If $G$ has a loop, then there exists
a communicating class $C$ in $G$ such that the vertices in the loop are
contained in $C$.
\end{lemma}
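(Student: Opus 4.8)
The plan is to let $i$ be the distinguished vertex of the loop, i.e.\ to write the loop as $\gamma=\langle i_{0}i_{1}\ldots i_{n}\rangle$ with $n\geq 1$ and $\pi_{0}(\gamma)=\pi_{n}(\gamma)=i$, so that $i_{0}=i_{n}=i$. I would then show that every vertex $i_{p}$ appearing in $\gamma$ communicates with $i$, so that all loop vertices lie in the single class $[i]$. Finally, since the full loop $\gamma$ witnesses $i\sim i$, we have $i\in V_{c}$, hence $[i]\in\mathcal{C}$ is a genuine communicating class (equivalently, by Theorem~\ref{cc=orbit}, $[i]=\mathcal{O}^{+}(i)\cap\mathcal{O}^{-}(i)\neq\varnothing$), and we take $C=[i]$.

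The core of the argument is the construction of the two required access paths for each $i_{p}$. For access from $i$ to $i_{p}$ I would use the subpath $\langle i_{0}\ldots i_{p}\rangle$, which has length $p$; for access from $i_{p}$ to $i$ I would use the subpath $\langle i_{p}\ldots i_{n}\rangle$, of length $n-p$. When both lengths are $\geq 1$ this immediately gives $i_{p}\sim i$. Since $\sim$ is symmetric and transitive on $V_{c}$ (as recorded in the remark following Definition~\ref{defcs}), communication of every $i_{p}$ with $i$ then forces $i_{p}\sim i_{q}$ for all $p,q$, so the loop vertices form a communicating set contained in $[i]$.

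The one delicate point, which is exactly the step I expect to be the main obstacle, is the endpoint case $p=0$ (equivalently $p=n$), where $i_{p}=i$ and the naive subpaths degenerate to length $0$; recall that access in Definition~\ref{defcs} requires a path of length $\geq 1$, so length-$0$ paths are not admissible. Here I would invoke the full loop $\gamma$ itself, which has $\ell(\gamma)=n\geq 1$ and satisfies $\pi_{0}(\gamma)=\pi_{n}(\gamma)=i$, to furnish the required path of length $\geq 1$ from $i$ to $i$ in both directions; this is precisely what establishes $i\sim i$ and hence $i\in V_{c}$. For an interior vertex $i_{p}$ ($0<p<n$) one could alternatively obtain the reflexivity $i_{p}\sim i_{p}$ by concatenating $\langle i_{p}\ldots i_{n}\rangle$ with $\langle i_{0}\ldots i_{p}\rangle$ via the composition of Definition~\ref{defcs}, yielding a loop at $i_{p}$ of length $n\geq 1$, but this is not needed once communication with $i$ is in hand. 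With the endpoint subtlety resolved, the maximality of $[i]$ and Theorem~\ref{cc=orbit} close the proof.
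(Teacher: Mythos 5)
Your proof is correct and follows essentially the same route as the paper's: both show that every vertex of the loop has mutual access to the base vertex $i_0=i$ via the two subpaths, and conclude that all loop vertices lie in the single class $C=[i]$. The only difference is that you carefully handle the degenerate endpoint case $p\in\{0,n\}$ by invoking the full loop as the required path of length $\geq 1$ from $i$ to $i$ — a subtlety the paper's terse proof passes over silently with ``the elements in $\lambda$ have mutual access'' — so your write-up is, if anything, a more complete version of the same argument.
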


\begin{proof}
Consider a loop $\lambda=\langle i_{0}i_{1} \ldots i_{n}i_{0} \rangle$. Since
the elements in $\lambda$ have mutual access each other, we have $i_{k}%
\in[i_{0}]$ for $0\leq k\leq{n}$. Therefore each $i_{k} $
belongs to the same communicating class $C=[{i_{0}}]$.
\end{proof}

\begin{lemma}
\label{pathout}Let $G$ be a graph and $\gamma=\langle i_{0}\ldots i_{n} \rangle$
a path in $G$. If there is a communicating class $C$ with $i_{0}\in
C$ in $G$, and if there is $\alpha\in\{1,\ldots,n\}$ with $i_{\alpha}\notin C$, then
$i_{\beta}\notin C$ for all $\beta\in\{\alpha,\ldots,n\}$.
\end{lemma}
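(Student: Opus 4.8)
The plan is to argue by contradiction, showing that if the path re-enters $C$ after leaving it, then the very vertex $i_\alpha$ at which it escaped would be forced back into $C$. Fix $\alpha$ as in the hypothesis and suppose, for contradiction, that there exists some $\beta\in\{\alpha,\ldots,n\}$ with $i_\beta\in C$. If $\beta=\alpha$ this is immediately absurd, since $i_\alpha\notin C$ by assumption; hence I may assume $\beta>\alpha$ and work to contradict $i_\alpha\notin C$.

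The key step is to assemble a two-way communication between $i_\alpha$ and $i_0$ out of subpaths of $\gamma$ together with a return path lying inside $C$. First, the subpath $\langle i_0\ldots i_\alpha\rangle$ of $\gamma$ has length $\alpha\geq 1$, so it witnesses access from $i_0$ to $i_\alpha$. For the reverse direction, the subpath $\langle i_\alpha\ldots i_\beta\rangle$ of $\gamma$ has length $\beta-\alpha\geq 1$ and gives access from $i_\alpha$ to $i_\beta$; since $i_0$ and $i_\beta$ both lie in the single communicating class $C=[i_0]$, we have $i_\beta\sim i_0$ and hence a path from $i_\beta$ to $i_0$. Concatenating the path $i_\alpha\to i_\beta$ with the path $i_\beta\to i_0$ (via the concatenation operation introduced earlier) produces a path from $i_\alpha$ to $i_0$ of length $\geq 1$.

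At this point $i_\alpha$ has access to $i_0$ and $i_0$ has access to $i_\alpha$, so $i_\alpha\sim i_0$, whence $i_\alpha\in[i_0]=C$. This contradicts $i_\alpha\notin C$, so no such $\beta$ can exist, and therefore $i_\beta\notin C$ for every $\beta\in\{\alpha,\ldots,n\}$.

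I expect the only real subtlety to be bookkeeping on path lengths. Because access and communication in this paper are defined through paths of length $\geq 1$ rather than $\geq 0$, I must verify that each subpath I invoke genuinely has positive length; this is exactly what the inequalities $\alpha\geq 1$ and $\beta>\alpha$ secure, and it is also the reason the degenerate case $\beta=\alpha$ must be handled separately at the outset rather than folded into the concatenation argument.
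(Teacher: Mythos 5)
Your proof is correct and is essentially the paper's own argument: assume for contradiction that some $i_{\beta}\in C$ with $\beta\geq\alpha$, use the subpath $\langle i_{0}\ldots i_{\alpha}\rangle$ for access in one direction and the concatenation of $\langle i_{\alpha}\ldots i_{\beta}\rangle$ with a path $i_{\beta}\to i_{0}$ inside $C$ for the other, concluding $i_{\alpha}\sim i_{0}$ and hence $i_{\alpha}\in C$, a contradiction. Your explicit handling of the degenerate case $\beta=\alpha$ and the checks that all invoked paths have length $\geq 1$ merely make precise what the paper leaves implicit.
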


\begin{proof}
Using the notation of the statement of the lemma, assume, to the contrary,
that there exists $\beta\geq\alpha$ with $i_{\beta}\in C$. Then there are
paths $\gamma_{1}=\langle i_{0} \ldots i_{\alpha} \rangle$ and $\gamma
_{2}=\langle i_{\alpha} \ldots i_{\beta} \ldots i_{0} \rangle$, showing that
$i_{0}\sim i_{\alpha}$ and hence $i_{\alpha}\in C$, which is a contradiction.
\end{proof}

\begin{lemma}
\label{ccloop}If a vertex $i$ belongs to a communicating class
$C$ in $G$, then there exists a loop $\lambda$ in $C$ such that $i\in
\lambda$.
\end{lemma}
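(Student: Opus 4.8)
The plan is to prove Lemma \ref{ccloop} by starting from the fact that $i$ belongs to a communicating class $C$, which means $i \sim i$ holds. By the definition of communication (Definition \ref{defcs}) and the reflexivity discussion, this precisely means there exists a path of length $\geq 1$ from $i$ to itself, i.e., a path $\gamma = \langle i \ldots i \rangle$ with $\ell(\gamma) = n \geq 1$. Such a path is by definition a loop (it satisfies $\pi_0(\gamma) = \pi_n(\gamma) = i$), and moreover $i \in \gamma$ trivially. So at first glance the statement seems almost immediate from unwinding the definitions.

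The only subtlety I would address is the claim that the loop lies entirely \emph{in} $C$, i.e., that every vertex appearing along $\gamma$ belongs to $C$. This is where Lemma \ref{loopcc} does the work: since $\gamma$ is a loop, Lemma \ref{loopcc} guarantees that all vertices of the loop lie in a common communicating class, and since $i$ is among them and $i \in C$, that class must be $C = [i]$. Thus the loop $\lambda := \gamma$ satisfies both $i \in \lambda$ and $\lambda \subset C$. So the proof amounts to: (1) invoke reflexivity $i \sim i$ on $C$ to extract a length-$\geq 1$ path from $i$ to $i$; (2) observe this is a loop; (3) apply Lemma \ref{loopcc} to conclude all its vertices are in $[i] = C$.

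I would present the argument in this order, leading with the existence of the closed path from the definition of a communicating class, then identifying it as a loop, then sealing the containment via Lemma \ref{loopcc}. The main (and really only) obstacle is conceptual rather than technical: one must be careful to use the \emph{nonstandard} convention of this paper, where communication requires a path of length $\geq 1$ (not $\geq 0$), so that $i \sim i$ genuinely produces a nontrivial closed path and hence a bona fide loop rather than the degenerate length-zero path. Once that convention is invoked correctly, no computation is needed and the result follows directly from the definitions together with Lemma \ref{loopcc}.
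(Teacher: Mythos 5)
Your proof is correct, and its skeleton matches the paper's: both extract from $i\sim i$ (which holds because membership in a communicating class is defined via paths of length $\geq 1$) a closed path $\gamma=\langle i \ldots i\rangle$ with $\ell(\gamma)=n\geq 1$, observe that this is by definition a loop containing $i$, and then argue that the loop lies entirely in $C$. The only divergence is in that final containment step. The paper invokes Lemma \ref{pathout}, the no-return lemma: once a path starting in $C$ leaves $C$ it can never re-enter, so a closed path beginning and ending at $i\in C$ can never exit $C$ in the first place. You instead invoke Lemma \ref{loopcc}, which places all vertices of any loop in a single communicating class, and then use the disjointness of the classes (the partition structure of $V_{c}/\hspace{-.1cm}\sim$) to identify that class as $[i]=C$; this extra identification step is the small piece of glue your route needs that the paper's does not, and you supply it correctly. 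Both lemmas are established before Lemma \ref{ccloop} and neither depends on it, so there is no circularity either way, and the two arguments are of essentially equal length and strength. Your explicit attention to the paper's nonstandard convention --- communication requires a path of length $\geq 1$, so $i\sim i$ is a genuinely nontrivial hypothesis producing a bona fide loop rather than a degenerate length-zero path --- is exactly the right point of care, and it is the same subtlety the paper emphasizes in its discussion following Theorem \ref{cc=orbit}.
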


\begin{proof}
Let $G$ be a graph and $C$ a communicating class in $G$. Then there is
pairwise communication between the elements in $C$, i.e., given $i,j\in C$
there exists a path $\gamma_{ij}=\langle i \ldots j \rangle$. In particular, for
$i=j$ we have the path $\gamma_{ii}=\langle i \ldots i \rangle$ with $\pi
_{0}(\gamma)=\pi_{n}(\gamma)=i$ for some $n\geq1$. Hence $\gamma$ is indeed a
loop containing $i$. By Lemma \ref{pathout} all components of this loop are in
$C$.
\end{proof}

\begin{proposition}
\label{lhascc}An $L-$graph has at least one communicating class.
\end{proposition}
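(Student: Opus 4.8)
The plan is to chain together the lemmata that have just been established to produce a communicating class from the $L$-graph hypothesis. The logical skeleton is: an $L$-graph has arbitrarily long paths (Lemma \ref{lpath}), a sufficiently long path must contain a loop (Lemma \ref{pathloop}), and a loop forces the existence of a communicating class (Lemma \ref{loopcc}). So the entire proposition is really a matter of invoking these three results in sequence.

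Concretely, let $G=(V,E)$ be an $L$-graph with $d=\#V$ vertices. First I would apply Lemma \ref{lpath} with $n=d$ to obtain a path $\gamma$ of length $\ell(\gamma)=d$; this is where the $L$-graph hypothesis (positive out-degree at every vertex) is essential, since without it the construction of a long path could stall at a sink. Next, since $\ell(\gamma)=d\geq d$, Lemma \ref{pathloop} guarantees that $\gamma$ contains a loop $\lambda$. Finally, Lemma \ref{loopcc} applied to $\lambda$ yields a communicating class $C$ in $G$ containing the vertices of $\lambda$. This produces the desired class, establishing the proposition.

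I do not anticipate a genuine obstacle here, since the three lemmata were evidently arranged precisely to make this a one-line corollary; the proof is essentially bookkeeping. The only point requiring a modicum of care is the interface between Lemma \ref{lpath} and Lemma \ref{pathloop}: one must pick the length of the path to be at least $d$ so that the pigeonhole argument underlying Lemma \ref{pathloop} applies. Choosing $n=d$ exactly suffices and keeps the argument clean. It is worth remarking that the resulting class is nonempty by construction (it contains at least the loop vertices), so there is no degeneracy to rule out, and the statement ``at least one communicating class'' follows immediately.
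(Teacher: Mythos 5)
Your proof is correct and follows exactly the paper's own argument: Lemma \ref{lpath} to produce a path of length at least $d$, Lemma \ref{pathloop} to extract a loop, and Lemma \ref{loopcc} to place the loop's vertices in a communicating class. No gaps; this matches the paper's proof of Proposition \ref{lhascc} essentially verbatim.
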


\begin{proof}
Consider an $L-$graph $G$ with $d$ vertices. By Lemma \ref{lpath} there exists a
path $\gamma$ such that $\ell(\gamma)=n$ with $n\geq d$. By Lemma
\ref{pathloop} the path $\gamma$ contains a loop $\lambda$, and by Lemma
\ref{loopcc} there exist a communicating class containing the vertices of
$\lambda$.
\end{proof}



\begin{remark}
\label{ccinorbit}The proof of Proposition \ref{lhascc} actually shows the
stronger statement: Let $G$ be an $L-$graph and $i\in V$. Then there exists at
least one communicating class $C$ with $C\subset\mathcal{O}^{+}(i)$. Note
that, in general, $i\in\mathcal{O}^{+}(i)$ may not hold.
\end{remark}


\noindent As a final idea of this section we explore an order on the set of
communicating classes, which will lead to a characterisation of so-called
forward invariant classes.

\begin{definition}
\label{deforder}Let $G$ be a graph with a family $\mathcal{C}=\{C_{1}%
,\ldots,C_{k}\}$ of communicating classes. We define a relation on $\mathcal{C}$
by%
\[%
\begin{array}
[c]{ccc}%
C\mathcal{_{\mu}\preceq}C\mathcal{_{\nu}} & \text{if} & \text{there exists a
path }\gamma\in\Gamma^{n}\text{ with}\\
&  & \pi_{0}(\gamma)\in C\mathcal{_{\mu}}\text{ and }\pi_{n}(\gamma)\in
C\mathcal{_{\nu}}\text{.}%
\end{array}
\]

\end{definition}

\begin{lemma}
\label{ccorder}Let $G$ be a graph with a family $\mathcal{C}=\{C_{1}%
,\ldots,C_{k}\}$ of communicating classes. The relation $\preceq$ defines a
(partial) order on $\mathcal{C}$.
\end{lemma}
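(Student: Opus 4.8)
The plan is to verify the three defining properties of a partial order on $\mathcal{C}$: reflexivity, antisymmetry, and transitivity. The only tool needed throughout is concatenation of paths, together with the observation that any two vertices lying in a single communicating class have mutual access by definition.

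For reflexivity, I would fix $C\in\mathcal{C}$ and any $i\in C$. By Lemma \ref{ccloop} there is a loop $\lambda$ in $C$ through $i$, which is precisely a path $\gamma$ with $\pi_{0}(\gamma)=\pi_{n}(\gamma)=i\in C$; hence $C\preceq C$. For transitivity, suppose $C_{\mu}\preceq C_{\nu}$ and $C_{\nu}\preceq C_{\rho}$. The first relation yields a path $\gamma_{1}=\langle i\ldots j\rangle$ with $i\in C_{\mu}$ and $j\in C_{\nu}$, and the second yields $\gamma_{2}=\langle j'\ldots k\rangle$ with $j'\in C_{\nu}$ and $k\in C_{\rho}$. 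Since $j$ and $j'$ lie in the same class $C_{\nu}$, they communicate, so there is a path $\delta=\langle j\ldots j'\rangle$. The concatenation $\langle\gamma_{1},\delta,\gamma_{2}\rangle$ is a path from $i\in C_{\mu}$ to $k\in C_{\rho}$, giving $C_{\mu}\preceq C_{\rho}$.

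The step demanding the most care is antisymmetry. Assume $C_{\mu}\preceq C_{\nu}$ and $C_{\nu}\preceq C_{\mu}$. Then there are paths $\gamma_{1}=\langle i\ldots j\rangle$ with $i\in C_{\mu}$, $j\in C_{\nu}$, and $\gamma_{2}=\langle j'\ldots i'\rangle$ with $j'\in C_{\nu}$, $i'\in C_{\mu}$. Using mutual access within $C_{\nu}$ (a path $\langle j\ldots j'\rangle$) and within $C_{\mu}$ (a path $\langle i'\ldots i\rangle$), the concatenation $\langle\gamma_{1},\langle j\ldots j'\rangle,\gamma_{2},\langle i'\ldots i\rangle\rangle$ shows that $j$ has access to $i$; combined with $\gamma_{1}$ giving $i$ access to $j$, this yields $i\sim j$. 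Hence $i$ and $j$ lie in the same communicating class, forcing $C_{\mu}=C_{\nu}$.

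The dynamical content is minimal once these concatenations are assembled; the real obstacle is purely bookkeeping—tracking which endpoint lies in which class and inserting the intra-class connecting paths so that the chain closes up correctly to produce genuine mutual access. Care must also be taken that each path used has length $\geq 1$, so that all invocations of $\sim$ remain consistent with Definition \ref{defcs}.
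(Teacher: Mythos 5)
Your proof is correct and follows essentially the same route as the paper: reflexivity via the loop from Lemma \ref{ccloop}, transitivity by inserting an intra-class connecting path inside $C_{\nu}$, and antisymmetry by closing a cycle of paths and invoking the maximality/partition property of communicating classes. If anything, your antisymmetry step is slightly more careful than the paper's, which concatenates $\gamma_{\mu\nu}$ and $\gamma_{\nu\mu}$ directly without making explicit the intra-class connecting paths that you insert to make the concatenation well-defined.
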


\begin{proof}  {\it Reflexivity}: Let $C\mathcal{_{\mu}}$ be a communicating class in
$G$. By Lemma \ref{ccloop}, for any $i\in C\mathcal{_{\mu}}$ there exists a
loop $\lambda$ in $C\mathcal{_{\mu}}$ such that $i\in\lambda$. Since
$C\mathcal{_{\mu}}$ is a communicating class, the loop $\lambda$ gives us a
path form $C\mathcal{_{\mu}}$ to itself, and therefore $C\mathcal{_{\mu
}\preceq}C\mathcal{_{\mu}}$.

   {\it Antisymmetry}: Assume that $C_{\mu}\preceq C_{\nu}$ and $C_{\nu
}\preceq C_{\mu}$. From the first relation we get a path $\gamma_{\mu\nu}%
\in\Gamma^{p}$ for some $p\geq1$ such that
\[
\pi_{0}(\gamma_{\mu\nu})\in C_{\mu}\,\,\,\,\,\text{and}\,\,\,\,\,\pi
_{p}(\gamma_{\mu\nu})\in C_{\nu}\text{.}%
\]
By the second relation there exists a path $\gamma_{\nu\mu}\in\Gamma^{q} $ for
some $q\geq1$ with
\[
\pi_{0}(\gamma_{\nu\mu})\in\mathcal{C}_{\nu}\,\,\,\,\,\,\text{and}%
\,\,\,\,\,\pi_{q}(\gamma_{\nu\mu})\in\mathcal{C}_{\mu}.
\]
Hence the concatenation $\left\langle \gamma_{\mu\nu},\gamma_{\nu\mu
}\right\rangle $\ is a path in $C_{\mu}$\ of length $p+q$.\ Since
communicating classes are maximal, and any two vertices in a communicating
class have mutual access, it holds that $\mathcal{C_{\mu}\ =\,\,\,C_{\nu}}$.

  {\it Transitivity}: Let $\mathcal{C}_{\mu}$, $\mathcal{C}_{\nu}$,
$\mathcal{C}_{\xi}$ in $\mathcal{C}$ and suppose that $C_{\mu}\preceq C_{\nu}$
and $C_{\nu}\preceq C_{\xi}$ hold. Since $C_{\mu}\preceq C_{\nu}$, there
exists a path $\gamma_{1}$ such that $\pi_{0}(\gamma_{1})\in C_{\mu}$ and
$\pi_{n_{1}}(\gamma_{1})\in C\mathcal{_{\nu}}$ for some $n_{1}\in\mathbb{N}$.
Moreover, since $C_{\nu}\preceq C_{\xi}$, there exists a path $\gamma_{3}$
such that $\pi_{0}(\gamma_{3})\in C_{\nu}$ and $\pi_{n_{3}}(\gamma_{3})\in
C_{\xi}$ for some $n_{3}\in\mathbb{N}$. Since $C\mathcal{_{\nu}}$ is a
communicating class, there exists a path $\gamma_{2}$ in $C\mathcal{_{\nu}}$
such that $\pi_{0}(\gamma_{2})=\pi_{n_{1}}(\gamma_{1})$ and $\pi_{n_{2}%
}(\gamma_{2})=\pi_{0}(\gamma_{3})$ for some $n_{2}\in\mathbb{N}$. The path
$\gamma=\langle\gamma_{1},\gamma_{2},\gamma_{3}\rangle$ is such that $\pi
_{0}(\gamma)\in C_{\mu}$ and $\pi_{m}(\gamma)\in C_{\xi}$ for $m=n_{1}%
+n_{2}+n_{3}$. Therefore, $C_{\mu}\preceq C_{\xi}$.

\end{proof}



\begin{definition}
\label{definvarsetgraph}A set of vertices $U$ of $G$ is called forward
invariant if,
\[
\mathcal{O}^{+}(U)\subset U\text{.}%
\]
Similarly, $U$ is called backward invariant if,
\[
\mathcal{O}^{-}(U)\subset U%
\]
and invariant if,
\[
\mathcal{O}^{+}(U)\cup\mathcal{O}^{-}(U)\subset U\text{.}%
\]

\end{definition}

\begin{remark}
\label{ordermax}Note that, by definition, a forward invariant communicating
class is maximal with respect to the order $\preceq$ introduced in Definition
\ref{deforder}.
\end{remark}

\begin{proposition}
\label{lficc}An $L-$graph $G$ contains a forward invariant communicating class.
\end{proposition}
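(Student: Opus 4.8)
The plan is to combine the existence of communicating classes in an $L$-graph (Proposition~\ref{lhascc}) with the fact that $\preceq$ is a partial order on a finite set (Lemma~\ref{ccorder}), exploiting Remark~\ref{ordermax}, which tells us that a $\preceq$-maximal communicating class is exactly a forward invariant one. Since $G$ is an $L$-graph, Proposition~\ref{lhascc} guarantees that the family $\mathcal{C}=\{C_1,\ldots,C_k\}$ is nonempty. A partial order on a \emph{finite} nonempty set always has at least one maximal element, so I would first extract a $\preceq$-maximal class $C$ from $\mathcal{C}$, and then argue that maximality forces forward invariance.

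First I would make the equivalence ``$\preceq$-maximal $\Longleftrightarrow$ forward invariant'' precise, since Remark~\ref{ordermax} only asserts one direction explicitly. Suppose $C$ is a communicating class that is maximal with respect to $\preceq$. To show $\mathcal{O}^{+}(C)\subset C$, take any $j\in\mathcal{O}^{+}(C)$, so there is a path $\gamma=\langle i\ldots j\rangle$ with $i\in C$ and $\ell(\gamma)\geq 1$. The target vertex $j$ either is transitory or lies in some communicating class $C'$. If $j$ were transitory then $\mathcal{O}^{+}(j)\cap\mathcal{O}^{-}(j)=\varnothing$, but I would use the fact that $G$ is an $L$-graph together with Remark~\ref{ccinorbit} to find a communicating class inside $\mathcal{O}^{+}(j)\subset\mathcal{O}^{+}(C)$; this produces a class $C''$ with $C\preceq C''$, and maximality forces $C''=C$, which after chasing the path back shows $j$ cannot escape $C$. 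The cleaner route is: if $j\in C'$ for a communicating class $C'$, the path $\gamma$ witnesses $C\preceq C'$, so maximality gives $C'=C$ and hence $j\in C$; the only thing remaining is to rule out $j$ being transitory while lying on a path leaving $C$.

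The main obstacle is precisely handling the transitory vertices reachable from $C$: a priori a path leaving $C$ could land on a vertex that belongs to no communicating class, so the naive argument ``the endpoint lies in a class $C'$ with $C\preceq C'$'' does not immediately apply. I would resolve this by invoking the $L$-graph hypothesis more carefully via Lemma~\ref{pathout}: once a path leaves $C$ it never returns, and by Remark~\ref{ccinorbit} every vertex of an $L$-graph has a communicating class in its forward orbit. Thus if any vertex outside $C$ were reachable from $C$, its forward orbit would contain a communicating class $\widetilde{C}$ strictly beyond $C$ in the order, i.e.\ $C\preceq\widetilde{C}$ with $\widetilde{C}\neq C$, contradicting maximality. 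Therefore $\mathcal{O}^{+}(C)\subset C$, establishing that the maximal class is forward invariant.

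Putting these together completes the argument: by Proposition~\ref{lhascc} the set $\mathcal{C}$ is nonempty and finite, so by Lemma~\ref{ccorder} it admits a $\preceq$-maximal element $C$; by the equivalence just established (the content of Remark~\ref{ordermax} together with the escape analysis above), $C$ is forward invariant. Hence $G$ contains a forward invariant communicating class, as claimed.
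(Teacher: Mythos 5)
Your proposal is correct and follows essentially the same route as the paper's own proof: extract a $\preceq$-maximal class from the finite poset $(\mathcal{C},\preceq)$ (nonempty by Proposition~\ref{lhascc}, ordered by Lemma~\ref{ccorder}) and show maximality forces forward invariance via Remark~\ref{ccinorbit} and Lemma~\ref{pathout}. You are in fact slightly more careful than the paper in two spots it leaves implicit — citing Proposition~\ref{lhascc} for nonemptiness of $\mathcal{C}$ and explicitly handling the case of a transitory vertex reachable from the maximal class — but the underlying argument is identical.
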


\begin{proof}
Consider an $L-$graph $G$ with set of communicating classes $\mathcal{C}%
=\{C_{1}, \ldots ,C_{k}\}$. Since an order relation on a finite set has maximal element, denote by $C_{\mu}$ a maximal element in $(\mathcal{C},\preceq)$. We show that $C_{\mu}$\ is forward invariant:
Assume to the contrary that $C_{\mu}$ is not forward invariant. Since then
$\mathcal{O^{+}}(\,C_{\mu}\mathcal{\,)}$ is not contained in $C_{\mu}$,there
exist $i,j_{0}\in V$ with $i\in C_{\mu}$ and $j_{0}\notin C_{\mu}$ such that
$(i,j_{0})\in E$. Since $O(j_{0})\geq1$, there exists $j_{1}\in V$
such that $(j_{0},j_{1})\in E$. Note that by Lemma \ref{pathout} $j_{1}$
cannot belong to$\mathcal{\ }C_{\mu}$. By Remark \ref{ccinorbit}\ there exists
a communicating class $C\subset\mathcal{O}^{+}(j_{0})$ and $C\cap C_{\mu
}=\varnothing$ by Lemma \ref{pathout}. It follows that, $C_{\mu}\preceq C$,
which contradicts maximality of $C_{\mu}$.
\end{proof}

\begin{remark}
\label{ficcinorbit}Note that in the proof of Proposition \ref{lficc} we
actually showed the stronger statement: Let $G$ be an $L-$graph and $i\in V$.
Then $\mathcal{O}^{+}(i)$ contains a forward invariant communicating class.
\end{remark}

\begin{remark}
\label{graphsum}Summarizing Remark \ref{ordermax} and Proposition \ref{lficc}
we see that for an $L-$graph the maximal elements of $(\mathcal{C},\preceq)$
are exactly the forward invariant communicating classes. It also follows
directly from Definition \ref{deforder} that backward invariant communicating
classes are minimal in $(\mathcal{C},\preceq)$. Note, however, that minimal
communicating classes in $(\mathcal{C},\preceq)$ need not be backward
invariant. For this fact to hold we would need a backward nondegeneracy
condition similar to the $L-$graph property, e.g., using the in-degree of
vertices. For an analogue of this issue in the theory of control systems in
discrete time compare \cite{AS91}.
\end{remark}

\subsection{Quotient graphs}\label{quograph}
There are at least two quotient structures associated with the idea of
communicating sets in a graph. The first idea is to simply take the order
graph given in Definition \ref{deforder}. Equivalently,  this graph is obtained as the quotient
$V_{c}/ \hspace{-.1cm}\sim$. The graph obtained from Definition \ref{deforder} does not necessarily cover all the vertices of a
given graph $G$, and its edges may not be edges of $G$. For a given directed graph $G=(V,E)$\ we define $V_{Q}%
:=\mathcal{C}\cup V\backslash V_{c}=\{C$, $C$ is a communicating
class$\}\cup\{i\in V$, $i$ is a transitory vertex$\}$ as a set of vertices.
The set of edges is constructed as follows: For $A,B\in V_{Q}$ we set
$(A,B)\in E_{Q}$ if there exist $i\in A$ and $j\in B$ with $(i,j)\in E$. (Note
the abuse of notation: if $A\in V_{Q}$ is a (transitory) vertex of $G$ then
$``i\in A"$ is to be interpreted as $`` i=A"$.) The graph $G_{Q}=(V_{Q},E_{Q})$
is called the extended quotient graph of $G$. It is easily seen that the
extended quotient graph of $G_{Q}$\ is $G_{Q}$\ itself. All vertices in
$V_{Q}$ have specific interpretations in the context of Markov chains, see
Section \ref{dictionary}.




\section{The semiflow of a finite directed graph \label{secsemiflow}}

Our second approach to study decompositions of graphs is based on an idea from
the theory of dynamical systems. A Morse
decomposition describes the global behaviour of a dynamical system, i.e. the
limit sets of a system and the flow between these sets. The following definition and basic ingredients are standard. These can be found for example in \cite{AH06, CK00}.

\begin{definition}
\label{MorseDecomp}A \textit{Morse decomposition} of a flow on a compact
metric space $X$ is a finite collection $\left\{  \mathcal{M}_{i}%
,\;i=1,...,n\right\}  $ of nonvoid, pairwise disjoint, and compact isolated
invariant sets such that:\smallskip
\begin{itemize}
 \item For all $x\in X$ one has $\omega(x),\,\omega^{\ast}(x)\subset%
{\displaystyle\bigcup_{i=1}^{n}}
\mathcal{M}_{i}$.


\item Suppose there are $\mathcal{M}_{j_{0}},\mathcal{M}_{j_{1}%
},...,\mathcal{M}_{j_{l}\text{ }}$ and $x_{1},...,x_{l}\in X\setminus%
{\displaystyle\bigcup_{i=1}^{n}}
\mathcal{M}_{i}$ with $\omega^{\ast}(x_{i})\subset\mathcal{M}_{j_{i-1}}$ and
$\omega(x_{i})\subset\mathcal{M}_{j_{i}}$ for $i=1,...,l$; then $\mathcal{M}%
_{j_{0}}\neq\mathcal{M}_{j_{l}}$.
\end{itemize}
The elements of a Morse decomposition are called \textit{Morse sets}.
\end{definition}

A Morse decomposition results in an order among the components, the Morse sets, of the
decomposition. It can be constructed from attractors and repellers, and the
behaviour of the system on the Morse sets is characterised by (chain) recurrence. Here we then construct an
analogue for (discrete) systems defined by $L-$graphs. Unfortunately,
this analogy is not complete since systems defined by these directed graphs only
lead to semiflows, i.e. systems for which the time set is $\mathbb{N}$, and
not all of $\mathbb{Z}$.

\subsection{Semiflows associated with graphs }\label{graphsemiflow}

When  adapting the idea of Morse decompositions and attractors-repellers
to systems induced by directed graphs, one faces two main
challenges: The first concerns the topology on discrete spaces that has some
interesting consequences for limit sets, isolated invariant\ sets, etc. The
second challenge stems from the fact that the out-degree of vertices can be
$>1$, resulting in set-valued systems. This is the reason why graphs define
semiflows (on $\mathbb{N}$ instead of $\mathbb{Z}$).

 Let $G=(V,E)$ be a finite directed graph. We denote by $\mathcal{P}(V)$ the power set of
the vertex set. We consider the topology derived by the discrete metric on $\mathcal{P}(V)$.

The directed graph $G$ gives rise to two semiflows, one with
the positive integers $\mathbb{N}$\ as time set, and one with the negative
integers $\mathbb{N}^{-}:=\{-n$, $n\in\mathbb{N}\}$:%
\[
\Phi_{G}:\mathbb{N}\times\mathcal{P}(V)\rightarrow\mathcal{P}(V)\text{, }%
\]%
\begin{equation}
\Phi_{G}(n,A)=\left\{
\begin{array}
[c]{cc}%
j\in V: & \exists\text{ }i\in A\text{ and }\gamma\in\Gamma^{n}\text{ such
that}\\
& \pi_{0}(\gamma)=i\text{ and }\pi_{n}(\gamma)=j
\end{array}
\right\}  \text{.}\label{defflow+}%
\end{equation}
Similarly, we define
\[
\Phi_{G}^{-}:\mathbb{N}^{-}\times\,\,\mathcal{P}(V)\rightarrow\mathcal{P}(V)\text{, }%
\]%
\begin{equation}
\Phi_{G}^{-}(n,A)=\left\{
\begin{array}
[c]{cc}%
j\in V: & \exists\text{ }i\in A\text{ and }\gamma\in\Gamma^{-n}\text{ such
that}\\
& \pi_{0}(\gamma)=j\text{ and }\pi_{-n}(\gamma)=i
\end{array}
\right\}  \text{.}\label{defflow-}%
\end{equation}

 We note that the definition of the semiflows $\Phi_{G}(n,A)$ and
$\Phi_{G}^{-}(n,A)$ only requires the basic ingredients of a graph: the sets
of vertices and of edges. 


 The next proposition collects some properties of the maps defined in
(\ref{defflow+}) and (\ref{defflow-}).

\begin{proposition}
\label{semiflow}Consider a graph $G$ and the associated map $\Phi_{G}$
defined in (\ref{defflow+}). This map is a semiflow, i.e. it has the properties:

\begin{itemize}
\item $\Phi_{G}$ is continuous,

\item $\Phi_{G}(0,A)=A$ for all $A\in\mathcal{P}(V)$,

\item $\Phi_{G}(n+m,A)=\Phi_{G}(n,\Phi_{G}(m,A))$ for all $A\in\mathcal{P}(V)$,
$m,n\in\mathbb{N}$.
\end{itemize}

\noindent The same properties hold for the negative semiflow $\Phi_{G}^{-}$.
\end{proposition}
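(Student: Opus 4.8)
The plan is to verify the three semiflow axioms directly from the definition of $\Phi_G$ in (\ref{defflow+}), treating continuity separately from the two algebraic identities. For the initial condition $\Phi_G(0,A)=A$, I would note that $\Gamma^0=V$ by the convention adopted after (\ref{defpathset}), so a ``path of length $0$'' starting at $i$ is just the vertex $i$ itself; hence the set in (\ref{defflow+}) with $n=0$ collects exactly those $j\in V$ equal to some $i\in A$, which is $A$. This is essentially a bookkeeping check, but it is the place where the length-$0$ convention must be invoked carefully.

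The core identity is the semigroup property $\Phi_G(n+m,A)=\Phi_G(n,\Phi_G(m,A))$, and here the main work goes into a concatenation argument. First I would establish the inclusion $\Phi_G(n+m,A)\subset\Phi_G(n,\Phi_G(m,A))$: given $j\in\Phi_G(n+m,A)$, there is $i\in A$ and a path $\gamma\in\Gamma^{n+m}$ with $\pi_0(\gamma)=i$ and $\pi_{n+m}(\gamma)=j$. Splitting $\gamma$ at its $m$-th vertex $k:=\pi_m(\gamma)$ yields subpaths $\gamma_1=\langle i\ldots k\rangle\in\Gamma^m$ and $\gamma_2=\langle k\ldots j\rangle\in\Gamma^n$, so $k\in\Phi_G(m,A)$ and therefore $j\in\Phi_G(n,\Phi_G(m,A))$. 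For the reverse inclusion I would run the argument backwards: any $j$ reached from $\Phi_G(m,A)$ by a length-$n$ path $\gamma_2$ starting at some $k\in\Phi_G(m,A)$ can be prepended with a length-$m$ path $\gamma_1$ from $A$ to $k$, and the concatenation $\langle\gamma_1,\gamma_2\rangle\in\Gamma^{n+m}$ (using the concatenation definition, which requires the endpoint of $\gamma_1$ to match the start of $\gamma_2$) witnesses $j\in\Phi_G(n+m,A)$. The one subtlety to handle is the degenerate cases $m=0$ or $n=0$, where the length-$0$ convention again enters.

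For continuity, the key observation is that $\mathcal{P}(V)$ carries the topology induced by the discrete metric, so it is a discrete space and every map out of it — in particular each $\Phi_G(n,\cdot)$ — is automatically continuous. Since the time set $\mathbb{N}$ is also discrete, $\Phi_G:\mathbb{N}\times\mathcal{P}(V)\to\mathcal{P}(V)$ is continuous on the (discrete) product. I would state this explicitly, because it is exactly the point flagged in the preceding discussion that ``the discrete metric plays an important role''; the continuity that would be genuinely delicate in a continuous setting becomes trivial here.

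Finally, the identical statements for $\Phi_G^-$ follow by the symmetric argument: replacing $\mathcal{O}^+$-type forward paths with the backward paths of (\ref{defflow-}), the same splitting-and-concatenation reasoning at an intermediate vertex gives the semigroup law on $\mathbb{N}^-$, the length-$0$ convention gives the identity at $0$, and discreteness gives continuity. I expect the main (and only real) obstacle to be organizing the concatenation argument so that the intermediate vertex $\pi_m(\gamma)$ is correctly identified as lying in $\Phi_G(m,A)$ and so that the degenerate length-$0$ endpoints are consistently accommodated; everything else reduces to the discreteness of the space and the path-composition definition already set up in Section~\ref{orbits&cc}.
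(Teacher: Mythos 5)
Your proposal is correct and takes essentially the same route as the paper's own proof: the identity at time $0$ via the convention $\Gamma^{0}=V$, the semigroup law via the split-at-the-$m$-th-vertex and concatenation double inclusion, continuity as an automatic consequence of the discrete topology, and the time-reversed argument for $\Phi_{G}^{-}$. The only presentational difference is that the paper separately treats the cases where one of $\Phi_{G}(m,A)$, $\Phi_{G}(n+m,A)$, $\Phi_{G}(n,\Phi_{G}(m,A))$ is empty (possible since $G$ is not assumed to be an $L-$graph), whereas your element-wise inclusions cover those cases vacuously, which is logically sufficient.
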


\begin{proof}
Note that in the discrete topology every function is continuous. The second item holds by definition of $\Gamma^{0}$ in (\ref{defpathset}): The
vertices reached from $A$ under the flow at time zero, are the elements in $V$
that belong to $A$. To show the third item we first assume that the three sets
$\Phi_{G}(m,A)$,\ $\Phi_{G}(n+m,A)$, and $\Phi_{G}(n,\Phi_{G}(m,A))$\ are
nonempty. Consider $j\in\Phi_{G}(n+m,A)$: There exist $i\in A$ and a path
$\alpha\in\Gamma^{n+m}$ such that $\pi_{0}(\alpha)=i$ and $\pi_{n+m}%
(\alpha)=j$. We split $\alpha$ as the concatenation of two paths $\beta$ and
$\gamma$ with $\ell(\beta)=m$ and $\ell(\gamma)=n$, having, $\pi_{0}(\beta
)=i$, $\pi_{m}(\beta)=k$ and $\pi_{0}(\gamma)=k$, $\pi_{n}(\gamma)=j$ for some
$k\in V$. Observe that by definition we have $k\in\Phi_{G}(m,A)$ and hence
$j\in\Phi_{G}(n,\Phi_{G}(m,A))$. On the other hand, if $j\in\Phi_{G}%
(n,\Phi_{G}(m,A))$ then there exist $k\in\Phi_{G}(m,A)$ and $\beta\in
\Gamma^{n}$ such that $\pi_{0}(\beta)=k$ and $\pi_{n}(\gamma)=j\in\Phi
_{G}(n,\Phi_{G}(m,A))$. Since $k\in\Phi_{G}(m,A)$, there are $i\in A$ and
$\alpha\in\Gamma^{m}$ such that $\pi_{0}(\alpha)=i$ and $\pi_{m}(\alpha)=k.$
The concatenation $\gamma=\left\langle \alpha,\beta\right\rangle
\ $satisfies$\ \pi_{0}(\gamma)=i$ and $\pi_{m+n}(\gamma)=j$, hence $j\in
\Phi_{G}(n+m,A)$.

 To finish the proof for $\Phi_{G}$, we consider the case that (at
least) one of the three sets $\Phi_{G}(m,A)$,\ $\Phi_{G}(n,\Phi_{G}(m,A))$,
and\ $\Phi_{G}(n+m,A)$\ is empty: Note first of all that by the definition of
$\Phi_{G}$ in (\ref{defflow+}) we have $\Phi_{G}(n,\varnothing)=\varnothing$
for all $n\in\mathbb{N}$. (i) If $\Phi_{G}(m,A)=\varnothing$ then we have
$\Phi_{G}(n,\Phi_{G}(m,A))=\varnothing$ by the preceding argument. Now if
$\Phi_{G}(n+m,A)\neq\varnothing$, then we can construct, as in the previous
paragraph, a point $k\in\Phi_{G}(m,A)$ by splitting a path $\alpha\in
\Gamma^{n+m}$ with $\pi_{0}(\alpha)\in A$ and $\pi_{n+m}(\alpha)\in\Phi
_{G}(n+m,A)$. This contradicts $\Phi_{G}(m,A)=\varnothing$ and therefore
$\Phi_{G}(n+m,A)=\varnothing$. (ii) Assume next that $\Phi_{G}(n,\Phi
_{G}(m,A))=\varnothing$. Using the same reasoning from the previous paragraph,
we see that if $j\in\Phi_{G}(n+m,A)$ then $j\in\Phi_{G}(n,\Phi_{G}(m,A))$.
Hence it holds that $\Phi_{G}(n+m,A)=\varnothing$. (iii) If $\Phi
_{G}(n+m,A)=\varnothing$ then there exists no path $\gamma\in\Gamma^{n+m}$
with $\pi_{0}(\gamma)\in A$. But by the reasoning in the previous paragraph,
if $j\in\Phi_{G}(n,\Phi_{G}(m,A))$ then there exist $i\in A$ and $\gamma
\in\Gamma^{m+n}$ such that $\pi_{0}(\gamma)=i $ and $\pi_{m+n}(\gamma)=j$,
which cannot be true, and hence we see that $\Phi_{G}(n,\Phi_{G}%
(m,A))=\varnothing$.

 The proof for $\Phi_{G}^{-}$ follows the same lines.
\end{proof}

\begin{remark}
\label{lflows+}If $G$ is an $L-$graph, then $\Phi_{G}(n,A)\neq
\varnothing$ for $A\neq\varnothing$ and $n\in\mathbb{N}$. This observation may
not hold for the negative semiflow $\Phi_{G}^{-}$ without additional assumptions.
\end{remark}

 One might wonder if the semiflows $\Phi_{G}$ defined in
(\ref{defflow+}) and $\Phi_{G}^{-}$\ from (\ref{defflow-}) can be combined to
a flow on $\mathbb{Z}$. It is not hard to see that in general this is
not possible even if the graph $G$ has additional properties (such as being
an $L-$graph) or if one restricts oneself to graphs that are one communicating class.

\begin{remark}
The semiflow $\Phi_{G}^{-}$ as defined in (\ref{defflow-}) can be interpreted
as the positive semiflow of the graph $G^{T}=(V,E^{T})$, where $(i,j)\in
E^{T}$ iff $(j,i)\in E$. $\Phi_{G}^{-}$ is sometimes called the time-reverse
semiflow of $\Phi_{G}$. Under the corresponding assumptions, all statements
for a positive semiflow also hold for its time-reverse counterpart.
\end{remark}

\subsection{Morse decompositions of semiflows}\label{semiMorsedecomp}
We next turn to some concepts from the theory of dynamical systems and study
their analogues for the semiflows defined in (\ref{defflow+}) and
(\ref{defflow-}).
Note the state space $\mathcal{P}(V)$ of the
semiflow $\Phi_{G}$ is finite with the discrete topology. It suffices to
introduce the concepts for points $A\in\mathcal{P}(V)$. To avoid trivial
situations where $\Phi_{G}(n,A)=\varnothing$ for some $n\in\mathbb{N}$ and
$A\in\mathcal{P}(V)$, $A\neq\varnothing$, we assume that all graphs are
$L-$graphs, compare Remarks \ref{lflows+} above and \ref{l=limitnonempty}%
\ below.

Next we adapt the necessary ingredients for a meaningful Morse decomposition of a semiflow in an $L-$graph.
 
 \textbf{Invariance:} A point $A\in\mathcal{P}(V)$ is said to be (forward)
invariant if $\Phi_{G}(n,A)\subset A$ for all $n\in\mathbb{N}$. Note that
$A\in\mathcal{P}(V)$ is invariant under $\Phi_{G}$ iff $A$ is a forward invariant
set of the underlying graph $G=(V,E)$, compare Definition
\ref{definvarsetgraph}. Hence invariance under $\Phi_{G}$ is a fairly strong
requirement of a set $A\in\mathcal{P}(V)$. As we will see, a meaningful Morse
decomposition of the semiflow $\Phi_{G}$ only requires a weak form of invariance.

\begin{definition}
A point $A\in\mathcal{P}(V)$ is said to be weakly invariant if for all
$n\in\mathbb{N}$ we have $\Phi_{G}(n,A)\cap A\neq\varnothing$.
\end{definition}

 \textbf{Isolated invariance:} For a (forward) invariant set $A\in\mathcal{P}(V) $
one could define {\it forward isolated invariant}.
\hspace{.1cm}But because of the discrete topology, we could choose
$N(A)=A $, and any forward invariant set then satisfies this property. As we
will see, because of the discrete topology employed, a meaningful Morse
decomposition of the semiflow $\Phi_{G}$ does not require the property of
isolated invariance.

\textbf{Limit sets:} To adapt the concept of a limit set from 
 the standard definition to the semiflow $\Phi_{G}$, note that a sequence converges
in the discrete topology iff it is eventually constant. Hence limit sets can
be defined in the following way:
\begin{definition}
The $\omega-$\textit{limit set} of a point $A\in\mathcal{P}(V)$ under $\Phi_{G}%
$\ is defined as
\[
\omega(A)=\left\{  y\in V,%
\begin{array}
[c]{c}%
\text{there\ are\ }t_{k}\rightarrow\infty\text{ }\\
\text{such\ that }y\in\Phi(t_{k},A)
\end{array}
\right\}  \in\mathcal{P}(V)\text{.}%
\]

\end{definition}

\begin{remark}
Note that by definition of the discrete topology we have for $A\in\mathcal{P}(V)$
the fact $\omega(A)=\cup\{\omega(\{i\})$, $i\in A\}$.
\end{remark}

\begin{remark}
\label{l=limitnonempty}The existence of $\omega-$limit sets and the $L-$graph
property are closely related: Let $G$ be a graph and $\Phi_{G}$\ its
associated semiflow. Then $G$ is an $L-$graph iff $\omega(A)\neq\varnothing$
for all $A\in\mathcal{P}(V)$. This observation justifies our
concentration on $L-$graphs in this section.
\end{remark}

For continuous dynamical systems Morse decompositions are required
to contain all of the $\omega$, $\omega^{\ast}-$limit sets of the system.
For semiflows induced by graphs a weaker condition of
recurrence turns out to be appropriate:

\begin{definition}
\label{defrecur+}Let $G$ be an $L-$graph with associated semiflow
$\Phi_{G}$ on $\mathcal{P}(V)$. A one-point set $\{i\}\in
\mathcal{P}(V)$ is called recurrent, if there exists a sequence $n_{l}$ in
$\mathbb{N}$, $n_{l}\rightarrow\infty$, such that $\{i\}\subset\Phi_{G}%
(n_{l},\{i\})$. A set $B\in\mathcal{P}(V)$ is called recurrent if for each $i\in
B$ the one-point set $\{i\}$ is recurrent under $\Phi_{G}$. The set
$\mathcal{R}:=\{i\in V$, $\{i\}$ is recurrent$\}$ is called the recurrent set
of $\Phi_{G}$. If $\mathcal{R}=V$ the semiflow $\Phi_{G}$ is called recurrent.
\end{definition}

Note that by Definition \ref{defrecur+} it holds that $\{i\}\in
\mathcal{P}(V)$ is recurrent iff $i\in\omega(\{i\})$ iff $i\in\lambda$ for some
loop $\lambda$ of $G$. 
\vspace{0.07in}

\textbf{No-cycle condition:} For continuous dynamical systems the no-cycle
property  
(second item in Definition \ref{MorseDecomp}) is essential for the
characterization of a Morse decomposition via an order. 
 For semiflows induced by graphs we can formulate an
analogue of the no-cycle condition either using only the (forward) semiflow
$\Phi_{G}$\ or a combination of $\Phi_{G}$ and $\Phi_{G}^{-}$.

\begin{definition}
\label{defnocycle}Consider the semiflow $\Phi_{G}$ and a finite collection
$\mathcal{A}=\{A_{1},...,A_{n}\}$ of points in $\mathcal{P}(V)$. $\mathcal{A}$ is
said to satisfy the no-cycle condition for $\Phi_{G}$ if for any subcollection
$A_{j_{0}},...,A_{j_{l}}$ of $\mathcal{A}$ with $\omega(A_{j_{\alpha}})\cap
A_{j_{\alpha+1}}\neq\varnothing$ for $\alpha=0,...,l-1$ it holds that
$A_{j_{0}}\neq A_{j_{l}}$.
\end{definition}

\begin{remark}
Alternatively, we can define $A\in\mathcal{P}(V)$ to be a no-return set if for
all one-point sets $\{i\}\in\mathcal{P}(V)$ we have: If $\omega^{\ast}(\{i\})\cap
A\neq\varnothing$ and $\omega(\{i\})\cap A\neq\varnothing$ then $\{i\}\subset
A$, where $\omega^{\ast}(B)$ is the $\omega-$limit set for $B\in\mathcal{P}(V)$
under the negative semiflow $\Phi_{G}^{-}$.
\end{remark}
With these preparations we can now introduce our concept of a Morse
decomposition of the semiflow $\Phi_{G}$.

\begin{definition}
Let $G=(V,E)$ be an $L-$graph. A Morse decomposition of the semiflow $\Phi
_{G}$ on $\mathcal{P}(V)$ is a finite collection of nonempty,
pairwise disjoint and weakly invariant sets $\left\{  \mathcal{M}_{\mu}%
\in\mathcal{P}(V):\mu=1,...,k\right\}  $ such that:
\begin{itemize}
\item $\mathcal{R\subset\cup
}_{\mu=1}^{k}\mathcal{M}_{\mu}$ 

\item  $\left\{  \mathcal{M}_{\mu}%
\in\mathcal{P}(V):\mu=1,...,k\right\}  $ satisfies the no-cycle condition from
Definition \ref{defnocycle}.
\end{itemize}
 The elements of a Morse decomposition are called Morse sets.
\end{definition}

\begin{proposition}
\label{Morseorder}Let $G=(V,E)$ be an $L-$graph and let $$\mathcal{M}=\left\{
\mathcal{M}_{\mu}\in\mathcal{P}(V):\mu=1,...,k\right\} $$ be a finite collection
of nonempty, pairwise disjoint and weakly invariant sets of the semiflow
$\Phi_{G}$ on $\mathcal{P}(V)$. The collection $\mathcal{M}$ is a
Morse decomposition of $\Phi_{G}$ iff the following properties hold:

\begin{itemize}
\item $\mathcal{R\subset\cup}_{\mu=1}^{k}\mathcal{M}_{\mu}. $ 

\item The relation
\textquotedblleft$\preceq$\textquotedblright\ defined by
\[
\mathcal{M}_{\alpha}\,\preceq\mathcal{M}_{\beta}\text{ if}%
\begin{array}
[c]{c}%
\text{there\ are }\mathcal{M}_{j_{0}}=\mathcal{M}_{\alpha},\mathcal{M}_{j_{1}%
},...\mathcal{M}_{j_{l}\text{ }}=\mathcal{M}_{\beta}\text{ in }\mathcal{M}%
\text{ }\\
\text{with }\omega(\mathcal{M}_{j_{i}})\cap\mathcal{M}_{j_{i+1}}%
\neq\varnothing\text{ for }i=0,...,l-1
\end{array}
\]
is a (partial) order on $\mathcal{M}$.

\end{itemize}

 We use the indices $\mu=1,...,k$ in such a way that they reflect
this order, i.e. if $\mathcal{M}_{\alpha}\,\preceq\mathcal{M}_{\beta}$ then
$\alpha\leq\beta$.
\end{proposition}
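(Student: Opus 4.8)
The plan is to exploit the fact that the first stated property is verbatim the first requirement in the definition of a Morse decomposition of $\Phi_G$, so it transfers without argument; consequently the entire content of the proposition is the equivalence, under the standing hypotheses (the $\mathcal{M}_\mu$ are nonempty, pairwise disjoint and weakly invariant), between the no-cycle condition of Definition \ref{defnocycle} and the assertion that $\preceq$ is a partial order. Neither of these two properties refers to the first bullet, so I would prove them equivalent outright. Since a partial order is exactly an antisymmetric preorder, I would first check that $\preceq$ is automatically reflexive and transitive, and then show that the remaining axiom, antisymmetry, is logically equivalent to the no-cycle condition.

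For reflexivity I would use weak invariance: for each $\mu$ we have $\Phi_G(n,\mathcal{M}_\mu)\cap\mathcal{M}_\mu\neq\varnothing$ for every $n$, and since $\mathcal{M}_\mu$ is finite a pigeonhole argument produces a single pair of vertices of $\mathcal{M}_\mu$ joined by paths of arbitrarily large length, whence $\omega(\mathcal{M}_\mu)\cap\mathcal{M}_\mu\neq\varnothing$; this supplies the one-step chain witnessing $\mathcal{M}_\mu\preceq\mathcal{M}_\mu$. Transitivity is the concatenation argument already used for the order on communicating classes in Lemma \ref{ccorder}: given chains realising $\mathcal{M}_\alpha\preceq\mathcal{M}_\beta$ and $\mathcal{M}_\beta\preceq\mathcal{M}_\gamma$, their juxtaposition (the terminal set of the first equals the initial set of the second, namely $\mathcal{M}_\beta$) is a chain realising $\mathcal{M}_\alpha\preceq\mathcal{M}_\gamma$. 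Thus only antisymmetry can fail, and it fails precisely when there are distinct $\mathcal{M}_\alpha\neq\mathcal{M}_\beta$ with $\mathcal{M}_\alpha\preceq\mathcal{M}_\beta$ and $\mathcal{M}_\beta\preceq\mathcal{M}_\alpha$.

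The heart of the proof is to match this failure of antisymmetry with a violation of the no-cycle condition. In one direction, assuming the no-cycle condition, suppose $\mathcal{M}_\alpha\preceq\mathcal{M}_\beta$ and $\mathcal{M}_\beta\preceq\mathcal{M}_\alpha$ with $\mathcal{M}_\alpha\neq\mathcal{M}_\beta$; concatenating the two defining chains yields a single chain whose endpoints are both $\mathcal{M}_\alpha$ and which passes through the distinct set $\mathcal{M}_\beta$, so the sets $A_{j_0}=\mathcal{M}_\alpha,\dots,A_{j_l}=\mathcal{M}_\alpha$ satisfy $\omega(A_{j_i})\cap A_{j_{i+1}}\neq\varnothing$ with $A_{j_0}=A_{j_l}$, contradicting Definition \ref{defnocycle}; hence antisymmetry holds and $\preceq$ is a partial order. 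Conversely, if the no-cycle condition fails there is a chain $A_{j_0},\dots,A_{j_l}$ with $\omega(A_{j_i})\cap A_{j_{i+1}}\neq\varnothing$ and $A_{j_0}=A_{j_l}$ running through some $A_{j_m}$ distinct from $A_{j_0}$; the initial segment gives $A_{j_0}\preceq A_{j_m}$ and the terminal segment gives $A_{j_m}\preceq A_{j_0}$, so antisymmetry fails. Once $\preceq$ is known to be a partial order, the indexing convention $\mathcal{M}_\alpha\preceq\mathcal{M}_\beta\Rightarrow\alpha\leq\beta$ is simply a linear extension (topological sort) of the finite poset $(\mathcal{M},\preceq)$.

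The step I expect to be most delicate is the bookkeeping separating genuine cycles from the reflexive self-connections. Because every weakly invariant Morse set satisfies $\omega(\mathcal{M}_\mu)\cap\mathcal{M}_\mu\neq\varnothing$, a one-step ``cycle'' at a single set is always present and must be read as the reflexive relation rather than as a forbidden cycle; the no-cycle condition is therefore to be applied only to closed chains meeting at least two distinct Morse sets, and it is exactly these that obstruct antisymmetry. Making this distinction precise — and verifying in the forward direction that the chain produced by concatenation genuinely revisits $\mathcal{M}_\alpha$ through the distinct set $\mathcal{M}_\beta$ — is the point requiring the most care.
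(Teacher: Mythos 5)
Your proposal is correct and follows essentially the same route as the paper: reflexivity of $\preceq$ from weak invariance plus a pigeonhole argument giving $\omega(\mathcal{M}_{\mu})\cap\mathcal{M}_{\mu}\neq\varnothing$, transitivity by concatenating chains, and the equivalence of antisymmetry with the no-cycle condition by concatenating (respectively splitting) closed chains through two distinct Morse sets. Your explicit caveat that the no-cycle condition must be read as excluding the trivial one-set ``cycle'' (which every weakly invariant set exhibits) is a genuine point of care that the paper's proof handles only implicitly, but it does not change the underlying argument.
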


\begin{proof}
Assume first that $\mathcal{M}=\left\{  \mathcal{M}_{\mu}\in\mathcal{P}(V)%
:\mu=1,...,k\right\}  $ is a Morse decomposition, we need to show that the
relation \textquotedblleft$\preceq$\textquotedblright\ is an order. Reflexivity: Let $\mathcal{M}_{\mu}\in\mathcal{M}$, then $\mathcal{M}_{\mu}$
is weakly invariant, i.e. $\Phi_{G}(n,A)\cap A\neq\varnothing$ for all
$n\in\mathbb{N}$. Since $\mathcal{M}_{\mu}$ consists of finitely many elements
there is at least one $i\in\mathcal{M}_{\mu}$ such that $i\in\Phi_{G}%
(n_{k},\mathcal{M}_{\mu})\cap\mathcal{M}_{\mu}\neq\varnothing$ for infinitely
many $n_{k}\in\mathbb{N}$. Hence $i\in\omega(\mathcal{M}_{\mu})\cap
\mathcal{M}_{\mu}$ and therefore $\omega(\mathcal{M}_{\mu})\cap\mathcal{M}%
_{\mu}\neq\varnothing$, which shows $\mathcal{M}_{\mu}\preceq\mathcal{M}_{\mu
}$. Antisymmetry: Assume there are $\mathcal{M_{\alpha}}$, $\mathcal{M}%
_{\beta}\in\mathcal{M}$ with $\mathcal{M}_{\alpha}\preceq\mathcal{M}_{\beta}$
and $\mathcal{M}_{\beta}\preceq\mathcal{M}_{\alpha}$. This means there\ are
$\mathcal{M}_{j_{0}}=\mathcal{M}_{\alpha},\mathcal{M}_{j_{1}},...\mathcal{M}%
_{j_{l}\text{ }}=\mathcal{M}_{\beta}$ in $\mathcal{M}$ with $\omega
(\mathcal{M}_{j_{i}})\cap\mathcal{M}_{j_{i+1}}\neq\varnothing$ for
$i=0,...,l-1$ and $\mathcal{M}_{k_{0}}=\mathcal{M}_{\beta},\mathcal{M}_{k_{1}%
},...\mathcal{M}_{k_{m}\text{ }}=\mathcal{M}_{\alpha}$ in $\mathcal{M}$ with
$\omega(\mathcal{M}_{j_{i}})\cap\mathcal{M}_{j_{i+1}}\neq\varnothing$ for
$i=0,...,m-1$. If there were two different sets in the collection
$\mathcal{M}_{j_{0}}=\mathcal{M}_{\alpha},\mathcal{M}_{j_{1}},...\mathcal{M}%
_{j_{l}\text{ }}=\mathcal{M}_{\beta}$, $\mathcal{M}_{k_{0}}=\mathcal{M}%
_{\beta},\mathcal{M}_{k_{1}},...\mathcal{M}_{k_{m}\text{ }}=\mathcal{M}%
_{\alpha}$, then $\mathcal{M}_{\alpha}\neq\mathcal{M}_{\alpha}$, which cannot
hold. Hence all sets in this collection are the same, in particular
$\mathcal{M}_{\alpha}=\mathcal{M}_{\beta}$. Transitivity: This follows
directly from the definition of \textquotedblleft$\preceq$\textquotedblright.

 Assume now that $\mathcal{M}=\left\{  \mathcal{M}_{\mu}%
\in\mathcal{P}(V):\mu=1,...,k\right\}  $ is a finite collection of nonempty,
pairwise disjoint and weakly invariant sets such the relation
\textquotedblleft$\preceq$\textquotedblright\ is an order. We need to show
that $\mathcal{M}$ satisfies the no-cycle condition: If $\mathcal{M}_{j_{0}%
},...,\mathcal{M}_{j_{l}}$ is a subcollection of $\mathcal{A}$ with
$\omega(A_{j_{\alpha}})\cap A_{j_{\alpha+1}}\neq\varnothing$ for
$\alpha=0,...,l-1$ then $\mathcal{M}_{j_{0}}\preceq\mathcal{M}_{j_{l}}$. If
this subcollection is disjoint, then $\mathcal{M}_{j_{0}}\npreceq
\mathcal{M}_{j_{l}}$, in particular $\mathcal{M}_{j_{0}}\neq\mathcal{M}%
_{j_{l}}$.
\end{proof}

As in the case of continuous dynamical systems, Morse decompositions
for semiflows induced by $L-$graphs need not be unique. For instance, the
collection $\{V,\varnothing\}$ always is a Morse decomposition of any
$\Phi_{G}$. As is the case of continuous dynamical systems we can use
intersections of Morse decompositions to refine existing ones, compare
page 8 in \cite{AH06}. Since the sets $V$ and $\mathcal{P}(V)$ are finite, the semiflow $\Phi_{G}$ on $\mathcal{P}(V)$ admits
a (unique) finest Morse decomposition for any $L-$graph $G$. The next
result characterises the finest Morse decomposition.

\begin{theorem}
\label{Morse=cc}Let $G=(V,E)$ be an $L-$graph with associated
semiflow $\Phi_{G}$ on $\mathcal{P}(V)$. For a finite
collection of
nonempty, pairwise disjoint sets $$\mathcal{M}=\left\{  \mathcal{M}_{\mu}%
\in\mathcal{P}(V):\mu=1,...,k\right\}$$ the following statements are
equivalent:

\begin{itemize}
\item $\mathcal{M}$ is the finest Morse decomposition of $\Phi_{G}$.

\item $\mathcal{M}=\mathcal{C}$, the set of communicating classes of $G$,
compare Lemma \ref{ccorder}.
\end{itemize}
\end{theorem}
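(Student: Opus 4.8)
The plan is to prove the single assertion that $\mathcal{C}$ is the finest Morse decomposition of $\Phi_{G}$; since the finest Morse decomposition is unique (as noted just before the theorem), this at once yields both implications of the equivalence. The argument has two parts: first, that $\mathcal{C}$ itself is a Morse decomposition, and second, that $\mathcal{C}$ is finer than every Morse decomposition.

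To establish that $\mathcal{C}$ is a Morse decomposition I will invoke Proposition \ref{Morseorder}, so I must check its standing hypotheses for $\mathcal{C}$: there are finitely many communicating classes (as $V$ is finite), each is nonempty by Definition \ref{defcc}, and distinct classes are disjoint since they form the partition $V_{c}/\hspace{-.1cm}\sim$. Weak invariance of a class $C$ comes from the loop structure: by Lemma \ref{ccloop} every $i\in C$ lies on a loop $\lambda$ of some length $m$ contained in $C$, and following $\lambda$ for $n$ steps lands on a vertex of $\lambda$, which lies in $C$ by Lemma \ref{loopcc}; hence $\Phi_{G}(n,C)\cap C\neq\varnothing$ for all $n$. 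Moreover the inclusion $\mathcal{R}\subset\bigcup\mathcal{C}$ holds, because recurrence of a one-point set $\{i\}$ means $i$ lies on a loop, so $i\in V_{c}$ by Lemma \ref{loopcc} (in fact $\mathcal{R}=V_{c}$, using Lemma \ref{ccloop} for the reverse inclusion).

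The crux is to identify the Morse order of Proposition \ref{Morseorder} on $\mathcal{C}$ with the communicating-class order of Definition \ref{deforder}, which is already a partial order by Lemma \ref{ccorder}. For this I analyse the single-step relation $\omega(C)\cap C'\neq\varnothing$. If it holds, there are $i\in C$ and $j\in C'$ with $j\in\omega(\{i\})$, so in particular a path of length $\geq 1$ runs from $i$ to $j$, giving $C\preceq C'$ in the class order. Conversely, if $C\preceq C'$ there is a path from some $i\in C$ to some $j\in C'$; prepending many traversals of a loop through $i$ in $C$ produces paths from $i$ to $j$ of arbitrarily large length, so $j\in\omega(\{i\})\subset\omega(C)$ and $\omega(C)\cap C'\neq\varnothing$. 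Thus the single-step relation coincides with the class order, and since the class order is transitive the chain-relation defining the Morse order collapses onto it. Lemma \ref{ccorder} then guarantees the Morse order is a partial order, and Proposition \ref{Morseorder} makes $\mathcal{C}$ a Morse decomposition.

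Finally I will show that $\mathcal{C}$ refines every Morse decomposition $\mathcal{N}=\{N_{1},\dots,N_{m}\}$, i.e. each $C\in\mathcal{C}$ lies inside a single $N_{j}$. Since $C\subset V_{c}=\mathcal{R}\subset\bigcup_{j}N_{j}$, every vertex of $C$ lies in some Morse set. If $i\in N_{a}$ and $i'\in N_{b}$ with $i,i'\in C$, the loop-and-path construction above gives $i'\in\omega(\{i\})$ and $i\in\omega(\{i'\})$, hence $N_{a}\preceq N_{b}$ and $N_{b}\preceq N_{a}$; antisymmetry of the Morse order (valid because $\mathcal{N}$ is a Morse decomposition) forces $N_{a}=N_{b}$. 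So each class sits in one Morse set, proving $\mathcal{C}$ is finer than $\mathcal{N}$. Being a Morse decomposition that is finer than every Morse decomposition, $\mathcal{C}$ must equal the unique finest one by a mutual-refinement argument (if two collections of nonempty disjoint sets each refine the other they coincide). I expect the main obstacle to be the order-coincidence step: it is precisely the passage from single connecting paths to arbitrarily long ones, via the loop structure, together with the handling of $\omega$-limit sets of whole classes, that links the dynamical Morse order to the combinatorial reachability order.
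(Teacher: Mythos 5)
Your proposal is correct, but it reaches the ``finest'' direction by a genuinely different route than the paper. The paper splits the equivalence into two implications and proves the forward one by contradiction: if the finest Morse decomposition contained a set that is not a communicating class, one could build a strictly finer Morse decomposition---either by deleting a vertex lying on no loop, or by splitting a set containing non-communicating $x,y$ into $[x]$, $[y]$ and the residual loop set $\mathcal{L}$---the delicate (and in the paper only sketched) point being that these modified collections are again Morse decompositions. You avoid that construction entirely: you verify that $\mathcal{C}$ is a Morse decomposition (this half essentially matches the paper's converse direction, though you make explicit a step the paper glosses, namely that the one-step relation $\omega(C)\cap C'\neq\varnothing$ coincides with the reachability order of Definition \ref{deforder}, via prepending traversals of a loop through $i\in C$, Lemma \ref{ccloop}, to manufacture arbitrarily long paths), and you then show $\mathcal{C}$ refines \emph{every} Morse decomposition $\mathcal{N}$ by observing that two points of one class lying in different sets $N_a,N_b$ would force $N_a\preceq N_b\preceq N_a$, contradicting the antisymmetry guaranteed by Proposition \ref{Morseorder}; uniqueness of the finest decomposition then yields both implications at once. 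Your route buys a cleaner argument---it sidesteps the unchecked claims that the paper's modified collections ($\mathcal{M}_\mu\setminus\{x\}$, and the triple $[x],[y],\mathcal{L}$) satisfy all Morse axioms, and it makes the identification of the dynamical order with the combinatorial order $(\mathcal{C},\preceq)$ of Lemma \ref{ccorder} explicit---at the cost of leaning more heavily on the fact, asserted but not proved in the paper, that the finest Morse decomposition exists, is unique, and refines all others (via intersections); since the theorem statement itself presupposes this, that dependence is legitimate.
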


\begin{proof}
Assume first that $\mathcal{M}=\left\{  \mathcal{M}_{\mu}\in\mathcal{P}(V)%
:\mu=1,...,k\right\}  $ is the finest Morse decomposition of
$\Phi_{G}$ and let $x,y\in\mathcal{M}_{\mu}$ for some $\mu=1,...,k$.
We have to show that $x$ and $y$ communicate. Observe first that
$x\in\mathcal{M}_{\mu}$ implies that there exists a loop $\gamma$ of
the graph $G$ with $x\in\gamma$. If
there is no such loop then $\{\mathcal{M}_{\mu}\backslash\{x\},\mathcal{M}%
_{\alpha}:\alpha\neq\mu\}$ is still a Morse decomposition and hence
$\mathcal{M}$ cannot be the finest one. If $x$ does not communicate
with $y$, take the communicating classes $[x]\neq\varnothing$ and
$[y]\neq\varnothing$, $[x]\cap\lbrack y]=\varnothing$ together with
set $\mathcal{L}=\{\lambda: \lambda\,\, \mbox{is a loop not in}\,
[x]\cup[y]\}$, to form the new Morse decomposition
${\mathcal{M}}^{\prime}=\{\mathcal{[}x],[y],\mathcal{L},\mathcal{M}_{\alpha}\,%
:\,\alpha\neq\mu\}$, which is finer than the given $\mathcal{M}$,
leading to a contradiction.

To see the converse, let $\mathcal{C}=\{C_{1},...,C_{k}\}$
be the set of communicating classes of the graph $G$. The
$C_{\alpha}$ are clearly nonempty, pairwise disjoint and weakly
invariant for all $\alpha=1,...,k$. Recall that
$\{i\}\in\mathcal{P}(V)$ is recurrent iff $i\in\lambda$ for some loop
$\lambda$ in $G$, and therefore we have
$\mathcal{R\subset\cup}_{\mu=1}^{k}C_{\mu}$. Finally, Lemma
\ref{ccorder} shows that the relation
\textquotedblleft$\preceq$\textquotedblright\ defined in Proposition
\ref{Morseorder} is indeed an order relation. Hence, the two ordered
sets $(\mathcal{C},\preceq)$ and $(\mathcal{M},\preceq)$ agree.
\end{proof}

\begin{remark}
\label{Morse=loop}The proofs of Proposition\ \ref{Morseorder} and of Theorem
\ref{Morse=cc} show the relationship between $\omega-$limit sets of $\Phi_{G}$
and loops of $G$: For each $A\in\mathcal{P}(V)$ the limit set $\omega(A)$
contains at least one loop. And vice versa, if $i\in\lambda$ is a vertex of a
loop $\lambda$ of $G$, then $i\in\omega(\{i\})$. This shows that for the
finest Morse decomposition $\mathcal{M}=\left\{  \mathcal{M}_{\mu}\text{, }%
\mu=1,...,k\right\}  $ of $\Phi_{G}$\ we have:
\[
\cup_{\mu=1}^{k}\mathcal{M}_{\mu}=\{i\in\lambda,\lambda\text{ is a loop of
}G\}\subset\cup\{\omega(A),A\in\mathcal{P}(V)\}\text{.}%
\]
This situation is different from the one for continuous dynamical systems,
where Morse sets may contain points that are not contained in limit sets, see
\cite{AH06}, Example 5.11. Indeed, it is not hard to see that for discrete
semiflows not all points in $\omega-$limit sets need to be elements of a Morse set.
\end{remark}

\begin{remark}
\label{Morse&omegalimit}Consider an $L-$graph $G$ with associated semiflow
$\Phi_{G}$. It follows from Theorem \ref{Morse=cc} that
$i\in\cup\{\omega(A)$, $A\in\mathcal{P}(V)\}$ $\backslash$ $\cup\{\mathcal{M}%
_{\mu}$, $\mathcal{M}_{\mu}$ is a finest Morse set$\}$ iff $i$ is a transitory
vertex and there exists a (finest) Morse set $\mathcal{M}$ with $i\in\Phi
_{G}(n,\mathcal{M})$ for some $n\geq1$.
\end{remark}

\subsection{Attractors and recurrence in semiflows}\label{arsemiflow}

Next we adapt the concept of an attractor to the semiflow on an $L-$graph
and analyse the connection with Morse decompositions.

\begin{definition}
\label{defattract+}Let $G=(V,E)$ be an $L-$graph with associated semiflow
$\Phi_{G}$ on $\mathcal{P}(V)$. A point $A\in\mathcal{P}(V)$ is
called an attractor if there exists a set $N\subset V$ with $A\subset N$ such
that $\omega(N)=A$.
\end{definition}

A set $N$ as in Definition \ref{defattract+} is called an
\textit{attractor neighborhood}. Note that in the discrete topology $A$ is a
neighborhood of itself and hence a point $A\in\mathcal{P}(V)$ is an attractor iff
$\omega(A)=A$. We also allow the empty set as an attractor.

A definition of repellers for semiflows of graphs is not obvious,
but the idea of complementary repellers from 
\cite{AH06} carries over with an obvious modification for semiflows:

\begin{definition}
For an attractor $A\in\mathcal{P}(V)$, the set
\[
A^{\ast}=\left\{  i\in V,\;\omega(\{i\})\backslash A\neq\varnothing\right\}
\in\mathcal{P}(V)%
\]
is called the complementary repeller of $A$, and $(A,A^{\ast})$ is called an
attractor-repeller pair.
\end{definition}

 Morse decompositions of semiflows can be characterized by
attractor-repellers pairs, in analogy to 
\cite{AH06}.
\begin{theorem}
\label{attractorMorsegraph}Let $G=(V,E)$ be an $L-$graph with associated
semiflow $\Phi_{G}$ on $\mathcal{P}(V)$. A finite collection of
sets $\mathcal{M}=\left\{  \mathcal{M}_{\mu}\in\mathcal{P}(V):\mu
=1,...,k\right\}  $ defines a Morse decomposition of $\Phi_{G}$ if and only if
there is a strictly increasing sequence of attractors
\[
\varnothing=A_{0}\subset A_{1}\subset A_{2}\subset...\subset A_{n}\subset V,
\]
such that
\[
\mathcal{M}_{n-i}=A_{i+1}\cap A_{i}^{\ast}\text{ for }0\leq i\leq n-1\text{.}%
\]

\end{theorem}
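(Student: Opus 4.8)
The plan is to reproduce, in the discrete semiflow setting, the classical Conley correspondence between Morse decompositions and increasing chains of attractors (compare \cite{AH06}), and to carry it out in two implications. Before either one I would record a few structural facts about attractors and complementary repellers of $\Phi_{G}$. First, for every $N\subseteq V$ the set $\omega(N)$ is itself an attractor, i.e. $\omega(\omega(N))=\omega(N)$; this holds because any vertex reached from $N$ by arbitrarily long paths is reached through a loop, and the loop vertices are recurrent and already lie in $\omega(N)$, cf. Remark \ref{Morse=loop}. Second, each attractor $A$ (so $\omega(A)=A$) is forward invariant, while the complementary repeller $A^{*}$ is backward invariant and disjoint from $A$. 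Third, the assignment $A\mapsto A^{*}$ reverses inclusions: if $A\subseteq B$ are attractors then $B^{*}\subseteq A^{*}$, and $A_{0}^{*}=\varnothing^{*}=V$ since $\omega(\{i\})\neq\varnothing$ for all $i$ by Remark \ref{l=limitnonempty}. These are the discrete analogues of the attractor--repeller calculus of \cite{AH06}.

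For the implication from an attractor chain to a Morse decomposition I would fix the chain $\varnothing=A_{0}\subsetneq A_{1}\subsetneq\dots\subsetneq A_{n}$ with $A_{n}$ the maximal attractor (so $A_{n}\supseteq\mathcal{R}$) and put $\mathcal{M}_{n-i}=A_{i+1}\cap A_{i}^{*}$. Pairwise disjointness is immediate from inclusion-reversal: for $i<j$ one has $A_{i+1}\subseteq A_{j}$, and forward invariance gives $\omega(\{x\})\subseteq A_{i+1}\subseteq A_{j}$ for $x\in A_{i+1}$, whence $A_{i+1}\cap A_{j}^{*}=\varnothing$. Nonemptiness and weak invariance come together: the strict inclusion $A_{i}\subsetneq A_{i+1}$, combined with the fact that an attractor is its own $\omega$-limit, forces a recurrent vertex (a loop) in $A_{i+1}\setminus A_{i}$ (if it held only transitory vertices, forward invariance of $A_{i}$ would pull them into $A_{i}$); such a vertex lies in $A_{i}^{*}$ because it belongs to its own $\omega$-limit, and a loop makes the one-point set weakly invariant. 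The covering $\mathcal{R}\subseteq\bigcup_{\mu}\mathcal{M}_{\mu}$ follows by assigning to each recurrent $r\in A_{n}$ the least index $i+1$ with $r\in A_{i+1}$: then $r\notin A_{i}$ but $r\in\omega(\{r\})$, so $r\in A_{i}^{*}$ and hence $r\in\mathcal{M}_{n-i}$. Finally the no-cycle condition is read off the chain, since $\omega(\mathcal{M}_{n-i})\subseteq A_{i+1}$ while $\mathcal{M}_{n-j}\cap A_{i+1}=\varnothing$ for $j<i$, so the relation $\preceq$ of Proposition \ref{Morseorder} runs only toward smaller attractors and cannot close up; Proposition \ref{Morseorder} then certifies that the collection is a Morse decomposition.

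For the reverse implication I would start from a Morse decomposition $\{\mathcal{M}_{1},\dots,\mathcal{M}_{n}\}$, use Proposition \ref{Morseorder} to order it so that $\mathcal{M}_{\alpha}\preceq\mathcal{M}_{\beta}$ implies $\alpha\leq\beta$, and define the candidate attractors as $\omega$-limits of the down-sets, $A_{i}:=\omega(\mathcal{M}_{n-i+1}\cup\dots\cup\mathcal{M}_{n})$ with $A_{0}:=\varnothing$. Each $A_{i}$ is an attractor by the first structural fact, the chain is strictly increasing because the order forbids flow returning from a lower Morse set to a higher one, and $A_{n}\supseteq\mathcal{R}$ by Theorem \ref{Morse=cc}. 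It then remains to establish $\mathcal{M}_{n-i}=A_{i+1}\cap A_{i}^{*}$: the inclusion $\mathcal{M}_{n-i}\subseteq A_{i+1}$ holds since $\mathcal{M}_{n-i}$ is a summand of $A_{i+1}$ and, being weakly invariant, contains its limiting loops; $\mathcal{M}_{n-i}\cap A_{i}=\varnothing$ together with recurrence gives $\mathcal{M}_{n-i}\subseteq A_{i}^{*}$; and the order is used to exclude the remaining Morse sets from the layer $A_{i+1}\cap A_{i}^{*}$.

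I expect the genuine difficulty to lie entirely in this last recovery step. In the discrete topology an $\omega$-limit set may contain transitory vertices (Remark \ref{Morse&omegalimit}), so the attractors $A_{i}$ can absorb vertices outside $\bigcup_{\mu}\mathcal{M}_{\mu}$, and one must verify that the repeller $A_{i}^{*}$ strips these off again so that the equality $\mathcal{M}_{n-i}=A_{i+1}\cap A_{i}^{*}$ is exact on both sides — recovering neither less nor more than the original Morse set. Controlling this requires pinning down the recurrent content of each layer through Theorem \ref{Morse=cc}, which locates the loops inside the Morse sets, and tracking the transitory vertices via the reachability order of Definition \ref{deforder}. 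By contrast, disjointness and the no-cycle/order condition are essentially formal consequences of the inclusion-reversing attractor--repeller calculus.
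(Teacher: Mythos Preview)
Your plan follows the same Conley-style template as the paper: build an increasing attractor filtration from the Morse sets in one direction, and read off the Morse sets as successive layers $A_{i+1}\cap A_i^{\ast}$ in the other. The forward direction (attractor chain $\Rightarrow$ Morse decomposition) is essentially the paper's argument, and your sketch there is sound.

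The substantive difference is in the reverse construction. You set $A_i:=\omega(\mathcal{M}_{n-i+1}\cup\dots\cup\mathcal{M}_{n})$, whereas the paper takes $A_i:=\mathcal{O}^{+}(\mathcal{M}_{n-i+1}\cup\dots\cup\mathcal{M}_{n})$, equivalently $\{x\in V:\omega^{\ast}(x)\cap(\mathcal{M}_{n-i+1}\cup\dots\cup\mathcal{M}_{n})\neq\varnothing\}$. For the finest decomposition, where each $\mathcal{M}_\mu$ is a communicating class (Theorem~\ref{Morse=cc}), the two coincide; in general they do not, and this is exactly where your recovery step breaks.

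Your claim that $\mathcal{M}_{n-i}\subseteq A_{i+1}$ because $\mathcal{M}_{n-i}$ ``being weakly invariant, contains its limiting loops'', and the companion claim $\mathcal{M}_{n-i}\subseteq A_i^{\ast}$ ``together with recurrence'', both tacitly assume every vertex of $\mathcal{M}_{n-i}$ lies on a loop. Weak invariance only guarantees that \emph{some} vertex does. A Morse set in a coarser decomposition may contain transitory vertices not in its own $\omega$-limit: take $V=\{v,a,b\}$ with edges $v\to a$, $a\to b$, $b\to a$; then $\{V\}$ is a Morse decomposition, yet $\omega(V)=\{a,b\}$ and $v\notin A_1$ under your definition, so the equality $\mathcal{M}_1=A_1\cap A_0^{\ast}$ already fails at the inclusion $\mathcal{M}_1\subseteq A_1$. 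Thus your diagnosis in the last paragraph is inverted: the danger is not that the $\omega$-built attractors absorb too many transitory vertices, but that they miss vertices of $\mathcal{M}_{n-i}$ altogether. Switching to the paper's forward-orbit construction $A_i=\mathcal{O}^{+}(\cdots)$ is the needed repair; the paper then leans on Theorem~\ref{Morse=cc} (that Morse sets contain the communicating classes carrying the loops) to close the argument.
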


\begin{proof}
Recall the indexing convention for Morse sets from Proposition
\ref{Morseorder}.

 Let $\mathcal{M}=\left\{  \mathcal{M}_{\mu}\in\mathcal{P}(V)%
:\mu=1,...,k\right\}  $ be a Morse decomposition of $\Phi_{G}$. In analogy to
the continuous time case we define the sets $A_{k}$ for $k=1,...,n$ as
follows:%
\[
A_{k}=\{x\in V\text{, }\omega^{\ast}(x)\cap(\mathcal{M}_{n}\cup...\cup
\mathcal{M}_{n-k+1})\neq\varnothing\}\text{.}%
\]
Note that for semiflows of graphs we have $A_{k}=\mathcal{O}^{+}%
(\mathcal{M}_{n}\cup...\cup\mathcal{M}_{n-k+1})$. We first need to show that
each $A_{k}$ is an attractor. The inclusion $\omega(A_{k})\subset A_{k}$
follows directly from the characterization above of A$_{k}$ as a positive
orbit. To see that $A_{k}\subset\omega(A_{k})$ pick $x\in A_{k}$. Then there
exists $\mu\in\{n-k+1,...,n\}$ with $x\in\mathcal{O}^{+}(\mathcal{M}_{\mu})$.
According to Theorem \ref{Morse=cc} the set $\mathcal{M}_{\mu}$ is a
communicating class of the graph $G$ and hence every element
$z\in\mathcal{M}_{\mu}$ is in a loop $\gamma$ that is completely contained in
$\mathcal{M}_{\mu}$, compare Lemma \ref{ccloop}. Therefore there is $z\in
A_{k}$ with $x\in\Phi_{G}(n_{l},\{z\})$ for as sequence $n_{l}\rightarrow
\infty$, i.e. $A_{k}\subset\omega(A_{k})$. Hence each $A_{k}$ is an attractor.

 Next we show that $\mathcal{M}_{n-i}=A_{i+1}\cap A_{i}^{\ast}$ for
$0\leq i\leq n-1$. To see that $\mathcal{M}_{n-i}\subset A_{i+1}$ pick
$x\in\mathcal{M}_{n-i}$. Since $\mathcal{M}_{n-i}$ is a communicating class of
the graph $G$ we have $\omega^{\ast}(x)\cap\mathcal{M}_{n-i}\neq\varnothing$
and therefore $x\in A_{i+1}$. To see that $\mathcal{M}_{n-i}\subset
A_{i}^{\ast}$ assume that there exists $x\in\mathcal{M}_{n-i} $ with $x\notin
A_{i}^{\ast}$, i.e. $\omega(x)\backslash A_{i}=\varnothing$ or $\omega
(x)\subset A_{i}$. But $x\in\mathcal{M}_{n-i} $ means $\omega(x)\cap
\mathcal{M}_{n-i}\neq\varnothing$, and by definition we have $\mathcal{M}%
_{n-i}\cap A_{i}=\varnothing$, which is a contradiction. This shows
$\mathcal{M}_{n-i}\subset A_{i+1}\cap A_{i}^{\ast} $ for $0\leq i\leq n-1$. To
see the reverse inclusion, let $x\in A_{i+1}\cap A_{i}^{\ast}$, i.e.
$x\in\mathcal{O}^{+}(\mathcal{M}_{n}\cup...\cup\mathcal{M}_{n-i})$ and
$\omega(x)\backslash A_{i}\neq\varnothing$. Recall that by Remark
\ref{Morse=loop} $\omega(x)$ contains a loop $\gamma$ of the graph $G$, and by
Lemma \ref{ccloop} and Theorem \ref{Morse=cc} each loop is contained in a
Morse set. Hence $\omega(x)\cap(\mathcal{M}_{n}\cup...\cup\mathcal{M}%
_{n-i})\neq\varnothing$, which by definition of a Morse decomposition means
that $x\in\mathcal{M}_{n-i}$.

 Let $\mathcal{M}_{n-i}=A_{i+1}\cap A_{i}^{\ast}$ for $0\leq
i\leq n-1$ be defined as in the statement of the theorem. We have to show that
$\{\mathcal{M}_{1},...,\mathcal{M}_{n}\}$ form a Morse decomposition. We start
by proving that the sets $\mathcal{M}_{n-i}=A_{i+1}\cap A_{i}^{\ast}$ are
nonempty. Note first of all that $A_{1},...,A_{n}\neq\varnothing$ by
assumption. We have by definition of attractor-repeller pairs that
$V=A_{0}^{\ast}\supset A_{1}^{\ast}\supset...\supset A_{n-1}^{\ast}\supset
A_{n}^{\ast}$. Now $A_{n-1}^{\ast}\neq\varnothing$ can be seen like this: If
$A_{n-1}^{\ast}=\varnothing$ then for all $x\in V$ we have $\omega
(x)\backslash A_{n-1}=\varnothing$, i.e. $\omega(x)\subset A_{n-1}$. Hence
there is $m\in\mathbb{N}$ such that for all $\alpha\geq m$ we have $\Phi
_{G}(\alpha,V\backslash A_{n-1})\subset A_{n-1}$ and therefore $A_{n}$ cannot
be an attractor. We conclude that $A_{0}^{\ast},A_{1}^{\ast},...,A_{n-1}%
^{\ast}\neq\varnothing$. Now if $A_{i+1}\cap A_{i}^{\ast}=\varnothing$ then we
have by the same reasoning as before: For all $x\in A_{i+1}$ it holds that
$\omega(x)\backslash A_{i}=\varnothing$, i.e. $\omega(x)\subset A_{i}$ and
$A_{i+1}$ cannot be an attractor.

The sets $\mathcal{M}_{i}$ are pairwise disjoint: Let $\alpha<\beta
$, then $M_{n-\alpha}\cap M_{n-\beta}=A_{\alpha+1}\cap A_{\alpha}^{\ast}\cap
A_{\beta+1}\cap A_{\beta}^{\ast}=A_{\alpha+1}\cap A_{\beta}^{\ast}\subset
A_{\beta}\cap A_{\beta}^{\ast}=\varnothing$.

 The sets $\mathcal{M}_{i}$ are weakly invariant: As above, it
suffices to prove that $\mathcal{M}_{n-i}=A_{i+1}\cap A_{i}^{\ast}$ contains a
loop of the graph $G$. If there is no loop in $A_{i+1}\cap A_{i}^{\ast}$, then
there exists $m\in\mathbb{N}$ such that for all $\alpha\geq m$ we have
$\Phi_{G}(\alpha,A_{i+1}\cap A_{i}^{\ast})\subset A_{i}$ and therefore
$A_{i+1}$ cannot be an attractor.

The collection $\{\mathcal{M}_{1},...,\mathcal{M}_{n}\}$ satisfies the
no-cycle condition: This is just a restatement of the assumption that
$A_{0}\subset A_{1}\subset A_{2}\subset...\subset A_{n}$ is a strictly
increasing sequence of attractors.
 We have shown so far that $\mathcal{M}=\{\mathcal{M}_{1},...,\mathcal{M}_{n}\}$ satisfies the conditions of a Morse decomposition,
except for $\mathcal{R\subset\cup}_{\mu=1}^{n}\mathcal{M}_{\mu}$. Now let
$\mathcal{M}^{\prime}=\{\mathcal{M}_{1}^{\prime},...,\mathcal{M}_{k}^{\prime}\}$
be the finest Morse decomposition of $\Phi_{G}$. Since the recurrence
condition was not used in the first part of the proof of Theorem
\ref{Morse=cc}, and since $\{i\}\in\mathcal{P}(V)$ is recurrent iff $i\in\lambda$
for some loop $\lambda$ of $G$, we know by Lemma \ref{loopcc} that
$\mathcal{R\subset\cup}_{\mu=1}^{k}\mathcal{M}_{\mu}^{\prime}\subset
\mathcal{\cup}_{\mu=1}^{n}\mathcal{M}_{\mu}$. Altogether we see that
$\mathcal{M}=\{\mathcal{M}_{1},...,\mathcal{M}_{n}\}$ is a Morse decomposition.
\end{proof}

\begin{corollary}
\label{Morseset=attr}Let $\mathcal{M}=\left\{  \mathcal{M}_{\mu}\in
\mathcal{P}(V):\mu=1,...,k\right\}  $ be the finest Morse decomposition of a
semiflow $\Phi_{G}$ on $\mathcal{P}(V)$, with order $\preceq$.
Then the maximal (with respect to $\preceq$) Morse sets are attractors.
Furthermore, the smallest (with respect to set inclusion) non-empty attractors
are exactly the maximal (with respect to $\preceq$) Morse sets.
\end{corollary}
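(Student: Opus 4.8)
The plan is to push everything through Theorem~\ref{Morse=cc}, which identifies the finest Morse sets with the communicating classes $\mathcal{C}$ and the order $\preceq$ with the reachability order on $\mathcal{C}$. By Remark~\ref{graphsum} the maximal elements of $(\mathcal{C},\preceq)$ are exactly the forward invariant communicating classes, and by the discussion following Definition~\ref{defattract+} a point $A\in\mathcal{P}(V)$ is an attractor iff $\omega(A)=A$. Before starting I would record one auxiliary identity: for any vertex $i$ lying in a communicating class one has $\omega(\{i\})=\mathcal{O}^{+}(i)$. The inclusion $\omega(\{i\})\subset\mathcal{O}^{+}(i)$ is immediate, and the reverse follows by a padding argument: by Lemma~\ref{ccloop} the vertex $i$ sits on a loop $\lambda$ of length $p\geq 1$ inside its class, so any path of length $m\geq 1$ from $i$ to $j$ can be prefixed by $t$ traversals of $\lambda$ to produce paths from $i$ to $j$ of length $tp+m\to\infty$; hence every $j\in\mathcal{O}^{+}(i)$ lies in $\omega(\{i\})$.

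For the first assertion (maximal Morse sets are attractors), let $C$ be a maximal Morse set. By Theorem~\ref{Morse=cc} and Remark~\ref{graphsum}, $C$ is a forward invariant communicating class, so $\mathcal{O}^{+}(C)\subset C$. Each $i\in C$ is recurrent (it lies on a loop in $C$ by Lemma~\ref{ccloop}), so $C\subset\omega(C)$; conversely $\omega(C)\subset\mathcal{O}^{+}(C)\subset C$. Thus $\omega(C)=C$ and $C$ is an attractor.

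For the second assertion I would prove both inclusions between the two families. If $C$ is a maximal Morse set and $A$ is any nonempty attractor with $A\subset C$, pick $i\in A\subset C$; then the auxiliary identity gives $C\subset\mathcal{O}^{+}(i)=\omega(\{i\})\subset\omega(A)=A\subset C$, forcing $A=C$, so $C$ is minimal among nonempty attractors. Conversely, let $A$ be a minimal nonempty attractor. By Remark~\ref{Morse=loop} and Lemma~\ref{loopcc} the set $\omega(A)=A$ contains a loop lying in some communicating class $C$; choosing $i$ in that loop gives $C\subset\mathcal{O}^{+}(i)=\omega(\{i\})\subset A$. By Remark~\ref{ficcinorbit}, $\mathcal{O}^{+}(i)$ contains a forward invariant communicating class $C'$, and $C'\subset\omega(\{i\})\subset A$. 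By the first assertion $C'$ is a nonempty attractor contained in $A$, so minimality of $A$ yields $A=C'$, a maximal Morse set. Combining the two directions identifies the minimal nonempty attractors with the maximal Morse sets.

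The routine verifications (the orders, forward invariance, and the equivalence attractor $\iff\omega(A)=A$) are all supplied by earlier results; the one genuinely load-bearing step is the auxiliary identity $\omega(\{i\})=\mathcal{O}^{+}(i)$ on communicating classes, since it is what converts \emph{reachable from $A$} into \emph{contained in $\omega(A)$} and thereby lets minimality of an attractor be detected through the order $\preceq$. I expect the main subtlety to be phrasing the padding argument cleanly enough that it applies uniformly to transitory targets as well as to vertices inside a class.
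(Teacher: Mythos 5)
Your proof is correct, but it takes a route that is genuinely different from the paper's in its second half. The paper proves the converse direction by invoking Theorem~\ref{attractorMorsegraph} to conclude that a smallest non-empty attractor $A$ is itself a Morse set, and then argues by contradiction: if $A$ is not $\preceq$-maximal it is not forward invariant, so by Lemma~\ref{pathout} there is $x\in\mathcal{O}^{+}(A)\setminus A$ with $\mathcal{O}^{+}(x)\cap A=\varnothing$, and Remark~\ref{ficcinorbit} produces a maximal communicating class inside $\mathcal{O}^{+}(x)$ which is an attractor. Note, however, that this attractor is \emph{disjoint} from $A$ by construction, even though the paper writes $A^{\prime}\varsubsetneqq A$; as printed, the contradiction with minimality under set inclusion does not quite land and needs a repair. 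Your argument sidesteps both Theorem~\ref{attractorMorsegraph} and this issue: from $i\in A$ you place a forward invariant class $C^{\prime}$ \emph{inside} $A$ via $C^{\prime}\subset\mathcal{O}^{+}(i)=\omega(\{i\})\subset\omega(A)=A$, and minimality then forces $A=C^{\prime}$ directly. The load-bearing identity $\omega(\{i\})=\mathcal{O}^{+}(i)$ for $i$ in a communicating class, proved by your loop-padding, is sound (the padding happens at the source vertex, so transitory targets cause no trouble) and is in fact the same device the paper uses implicitly inside the proof of Theorem~\ref{attractorMorsegraph} when it produces $x\in\Phi_{G}(n_{l},\{z\})$ along a sequence $n_{l}\rightarrow\infty$. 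What the paper's route buys is a tie-in to the general attractor-sequence machinery; what yours buys is a shorter, self-contained argument that also supplies an explicit proof of the claim, asserted without detail in the paper, that a maximal Morse set contains no non-empty attractor other than itself.
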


\begin{proof}
If $M$ is a maximal Morse set of the semiflow $\Phi_{G}$, then, according to
Theorem \ref{Morse=cc} and Proposition \ref{Morseorder}, $M$ is a maximal
communicating class of the graph $G$. Hence $M$ is forward invariant,
$\omega(M)=M$ and $M$ does not contain any attractor, except for the empty set.

 Vice versa, if $A$ is a smallest (with respect to set inclusion)
non-empty attractor, then $A$ is a Morse set according to Theorem
\ref{attractorMorsegraph}. If $A$ is not maximal (with respect to $\preceq$),
then $A$ is not forward invariant for the graph $G$ and hence there exists a
point $x\in\mathcal{O}^{+}(A)\backslash A$ such that $\mathcal{O}^{+}(x)\cap
A=\varnothing$ (by Lemma \ref{pathout}). According to Remark \ref{ficcinorbit}%
, $\mathcal{O}^{+}(x)$ contains a maximal communicating class, which is an
attractor $A^{\prime}\varsubsetneqq A$ and hence $A$ is not a smallest
non-empty attractor.
\end{proof}

 It remains to analyse the behaviour of the semiflow $\Phi_{G}$ on a
Morse set. Definition \ref{defrecur+} and Remark \ref{Morse=loop} already
point at a recurrence property that holds for $\omega-$limit sets: Note that
by Definition \ref{defrecur+} it holds: $\{i\}\in\mathcal{P}(V)$ is recurrent iff
$i\in\omega(\{i\})$ iff $i\in\lambda$ for some loop $\lambda$ of $G$. Hence we
obtain from Remark \ref{Morse=loop} for the finest Morse decomposition
$\mathcal{M}=\left\{  \mathcal{M}_{\mu}\text{, }\mu=1,...,k\right\}  $ of
$\Phi_{G}$%
\begin{equation}
\mathcal{R}=\cup_{\mu=1}^{k}\mathcal{M}_{\mu}\text{.}\label{rec=Morsesets}%
\end{equation}
The recurrent set is partitioned into the disjoint sets of the finest Morse
decomposition under the following natural concept of connectedness.

\begin{definition}
\label{semiflowconnect}A set $B\in\mathcal{P}(V)$ is called connected under
$\Phi_{G}$ if for any $i,j\in B$ there exist $n\in\mathbb{N}$ and a map
$p:\{0,...,n\}\rightarrow B$ with the properties

\begin{itemize}
\item $p(0)=i$, $p(n)=j$

\item $p(m+1)\in\Phi_{G}(1,\{p(m)\})$ for $m=0,...,n-1$.
\end{itemize}

 The flow $\Phi_{G}$ is called strongly connected if the set of
vertices $V$ is connected under $\Phi_{G}$.
\end{definition}

The following result then characterises the behaviour of the semiflow
$\Phi_{G}$ on its Morse sets, compare Theorem 6.4 in \cite{AH06} for continuous
dynamical systems.

\begin{theorem}
\label{attrrec}Let $G=(V,E)$ be an $L-$graph with associated semiflow
$\Phi_{G}$ on $\mathcal{P}(V)$. The recurrent set $\mathcal{R}$
of $\Phi_{G}$ satisfies
\[
\mathcal{R}=\bigcap\left\{  A\cup A^{\ast},\;\,A\text{ is\ an\ attractor}%
\right\}
\]
and the (finest) Morse sets of $\Phi_{G}$ coincide with the $\Phi_{G}%
$-connected components of $\mathcal{R}$.
\end{theorem}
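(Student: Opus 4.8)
The plan is to treat the two assertions separately, relying on three facts already established: the identity $\mathcal{R}=\cup_{\mu=1}^{k}\mathcal{M}_{\mu}$ from (\ref{rec=Morsesets}), the identification (Theorem \ref{Morse=cc}) of the finest Morse sets $\mathcal{M}_{\mu}$ with the communicating classes of $G$, and the characterisation (Definition \ref{defrecur+}) that $\{i\}$ is recurrent iff $i\in\omega(\{i\})$ iff $i$ lies on a loop of $G$.

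For the attractor--repeller formula the inclusion $\mathcal{R}\subset\bigcap\{A\cup A^{\ast}\}$ is immediate: given a recurrent vertex $i$ and an arbitrary attractor $A$, either $i\in A$, or else $i\notin A$, in which case $i\in\omega(\{i\})$ gives $i\in\omega(\{i\})\setminus A$, so $\omega(\{i\})\setminus A\neq\varnothing$ and $i\in A^{\ast}$. For the reverse inclusion I argue by contraposition: if $i$ is not recurrent, I must produce a single attractor $A$ with $i\notin A\cup A^{\ast}$, and I claim $A:=\omega(\{i\})$ works. First, $\omega(\{i\})$ is forward invariant, since $y\in\Phi_{G}(t_{k},\{i\})$ with $t_{k}\to\infty$ forces $\Phi_{G}(1,\{y\})\subset\Phi_{G}(t_{k}+1,\{i\})$; and because $\mathcal{P}(V)$ is finite the orbit $\bigl(\Phi_{G}(t,\{i\})\bigr)_{t}$ is eventually periodic, from which one reads off $\omega(\omega(\{i\}))=\omega(\{i\})$, so that $\omega(\{i\})$ is indeed an attractor (nonempty by the $L$-graph hypothesis, Remark \ref{l=limitnonempty}). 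Since $i$ is not recurrent we have $i\notin\omega(\{i\})=A$, and $\omega(\{i\})\setminus A=\varnothing$ gives $i\notin A^{\ast}$; hence $i\notin\bigcap\{A\cup A^{\ast}\}$.

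For the second assertion I would show that each finest Morse set $\mathcal{M}_{\mu}$ is exactly a $\Phi_{G}$-connected component of $\mathcal{R}$ in the sense of Definition \ref{semiflowconnect}. First, $\mathcal{M}_{\mu}$ is $\Phi_{G}$-connected: for $i,j\in\mathcal{M}_{\mu}$ mutual access yields a path from $i$ to $j$, and Lemma \ref{pathout} forces any such path to remain inside the communicating class (were it to leave, it could never return to reach $j\in\mathcal{M}_{\mu}$). Next, $\mathcal{M}_{\mu}$ is maximal: if a $\Phi_{G}$-connected $B\subset\mathcal{R}$ contained $\mathcal{M}_{\mu}$ together with some $j\notin\mathcal{M}_{\mu}$, the directed $\Phi_{G}$-paths in $B$ between $j$ and any $i\in\mathcal{M}_{\mu}$ (in both directions) would give $i\sim j$, forcing $j\in\mathcal{M}_{\mu}$, a contradiction. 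Finally, since $\mathcal{R}=\cup_{\mu}\mathcal{M}_{\mu}$ with the $\mathcal{M}_{\mu}$ pairwise disjoint, any connected component $K$ meets some $\mathcal{M}_{\mu}$; the union $K\cup\mathcal{M}_{\mu}$ is again $\Phi_{G}$-connected (route through the shared vertex), so maximality of $K$ gives $\mathcal{M}_{\mu}\subset K$, and maximality of $\mathcal{M}_{\mu}$ then gives $K=\mathcal{M}_{\mu}$.

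The routine inclusions are straightforward; the step demanding genuine care is verifying that $\omega(\{i\})$ is an attractor. It is tempting, but false, to assert that $\omega(\{i\})$ consists only of recurrent vertices: an excursion repeatedly spawned from a loop shows $\omega(\{i\})$ may well contain transitory vertices. What rescues the argument is that attractors are characterised by $\omega(A)=A$, and that eventual periodicity of the finite-state orbit yields $\omega(\omega(\{i\}))=\omega(\{i\})$ regardless of whether individual points are recurrent; this is the one place where finiteness of $\mathcal{P}(V)$ is essential. For the connectedness statement the only delicacy is that $\Phi_{G}$-connectedness is a \emph{directed} notion, which is handled by Lemma \ref{pathout} and by the shared-vertex concatenation used above, in conjunction with Remark \ref{Morse=loop}.
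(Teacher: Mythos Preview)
Your proof is correct and follows essentially the same line as the paper's: both directions of the attractor--repeller identity hinge on the attractor $\omega(\{i\})$ (the paper phrases the reverse inclusion directly, you use the contrapositive), and the connectedness statement is unpacked from (\ref{rec=Morsesets}) and Theorem~\ref{Morse=cc}. You supply more detail than the paper does---notably the eventual-periodicity argument giving $\omega(\omega(\{i\}))=\omega(\{i\})$, and the explicit verification that communicating classes are the maximal $\Phi_G$-connected subsets of $\mathcal{R}$---where the paper simply refers back to the proof of Theorem~\ref{attractorMorsegraph} and to Definition~\ref{semiflowconnect}.
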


\begin{proof}
Assume that $x\in\mathcal{R}$, then $x\in\gamma$ for some loop $\gamma$ of the
graph $G$. Let $A$ be an attractor for $\Phi_{G}$, then if $x\in A$ we are
done. Otherwise if $x\notin A$ then it holds that $\gamma\cap A=\varnothing$.
But $\gamma\subset\omega(x)$ and therefore $\omega(x)\backslash A\neq
\varnothing$, which means that $x\in A^{\ast}$. Conversely, if $x\in
\cap\{A\cup A^{\ast},\;\,A$ is\ an\ attractor$\}$, then $x$ is in any
attractor containing $\omega(x)$. Arguing as in the proof of Theorem
\ref{attractorMorsegraph}, there exists a loop $\gamma$ of the graph such that
$x\in\gamma$, which shows that $x\in\mathcal{R}$.

The second statement of the theorem follows directly from Definition
\ref{semiflowconnect} and (\ref{rec=Morsesets}).
\end{proof}

 As discussed in the paragraph about invariance (see Section \ref{semiMorsedecomp}), forward
invariance under $\Phi_{G}$ is a fairly strong requirement for a set $A\subset
V$, and thus it appears that there are few sets to which one can restrict the
semiflow $\Phi_{G}$, namely (unions of positive) orbits. 

\begin{definition}
Let $G=(V,E)$ be an $L-$graph with associated semiflow $\Phi_{G}$ on
$\mathcal{P}(V)$. Let $G^{\prime}=(V^{\prime},E^{\prime})$ be
the subgraph of $G$ for a subset of vertices $V^{\prime}\subset V$. The
resulting semiflow $\Phi_{G^{\prime}}$ on $\mathcal{P}%
(V^{\prime})$ is called the semiflow $\Phi_{G}$ restricted to $V^{\prime}$.
\end{definition}

 Note that if $\mathcal{M}\subset V$ is a Morse set of $\Phi_{G}$,
then the induced graph $(V_{\mathcal{M}}, E_{\mathcal{M}})$ is an $L-$graph. This observation allows us to prove the following fact about Morse sets and recurrence.

\begin{corollary}
Under the conditions of Theorem \ref{attrrec}\ the semiflow $\Phi_{G}$
restricted to any Morse set is recurrent.
\end{corollary}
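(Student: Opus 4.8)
The plan is to reduce everything to the loop characterisation of recurrence together with the identification of Morse sets as communicating classes. By Theorem~\ref{Morse=cc} every (finest) Morse set $\mathcal{M}$ is a communicating class of $G$, and by the remark preceding the corollary the induced subgraph $G'=(V_{\mathcal{M}},E_{\mathcal{M}})$ is again an $L$-graph, so the restricted semiflow $\Phi_{G'}$ is a genuine semiflow to which all the preceding theory applies. In particular the recurrence criterion stated after Definition~\ref{defrecur+} holds for $\Phi_{G'}$: a one-point set $\{i\}$ is recurrent under $\Phi_{G'}$ iff $i\in\omega_{G'}(\{i\})$ iff $i$ lies on some loop of $G'$. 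Since $V_{\mathcal{M}}=\mathcal{M}$, the goal is therefore to show that every vertex of $\mathcal{M}$ lies on a loop of the induced subgraph $G'$, which would give $\mathcal{R}_{G'}=V_{\mathcal{M}}$ and hence that $\Phi_{G'}$ is recurrent by Definition~\ref{defrecur+}.

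First I would fix an arbitrary vertex $i\in V_{\mathcal{M}}=\mathcal{M}$. Because $\mathcal{M}$ is a communicating class of $G$, Lemma~\ref{ccloop} produces a loop $\lambda$ with $i\in\lambda$ such that $\lambda$ lies entirely in $\mathcal{M}$ (recall that the conclusion of Lemma~\ref{ccloop}, via Lemma~\ref{pathout}, guarantees that \emph{all} vertices of the loop belong to $\mathcal{M}$, not merely $i$). The next step is to observe that this loop survives the passage to the induced subgraph: every edge of $\lambda$ joins two consecutive vertices of $\lambda$, both of which lie in $\mathcal{M}=V_{\mathcal{M}}$, so each such edge is by construction retained in $E_{\mathcal{M}}$. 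Consequently $\lambda$ is a loop of $G'$ containing $i$.

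Applying the recurrence criterion to $G'$, the existence of such a loop through $i$ gives $i\in\omega_{G'}(\{i\})$, so $\{i\}$ is recurrent under $\Phi_{G'}$. Since $i\in V_{\mathcal{M}}$ was arbitrary, the recurrent set of $\Phi_{G'}$ equals $V_{\mathcal{M}}$, and therefore $\Phi_{G'}$ is recurrent. The argument is short because the only genuine point requiring care is that the loop furnished by Lemma~\ref{ccloop} is contained in $\mathcal{M}$ at the level of \emph{all} its vertices and edges, so that restricting to the induced subgraph neither deletes the loop nor introduces any obstruction; this is exactly the containment built into Lemma~\ref{ccloop} (and Lemma~\ref{pathout}), and it is the step I would state explicitly rather than treat as obvious. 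Everything else is a direct application of Theorem~\ref{Morse=cc}, the $L$-graph property of the induced subgraph, and the loop description of recurrence.
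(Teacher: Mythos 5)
Your proof is correct and takes essentially the same route as the paper, whose entire proof is the one-line remark that the corollary ``follows directly from Theorem \ref{Morse=cc} and Definition \ref{defrecur+}''; you have simply made the details explicit (Lemma \ref{ccloop} supplying a loop through each vertex that stays inside the class, the loop surviving in the induced $L$-subgraph, and the loop characterisation of recurrence). Note only that, like the paper's terse proof, your argument implicitly reads ``any Morse set'' as ``any \emph{finest} Morse set'' via Theorem \ref{Morse=cc} --- which is the intended reading, since for a coarser Morse set such as $V$ in the trivial decomposition $\{V,\varnothing\}$ the statement fails whenever $G$ has transitory vertices.
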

\begin{proof}
 The proof follows directly from Theorem \ref{Morse=cc} and
Definition \ref{defrecur+}.
\end{proof}

 As we have seen, most of the concepts used to characterise the
global behaviour of continuous dynamical systems can be adapted in a natural
way to the positive semiflow of an $L-$graph, resulting in very similar
characterisations. Indeed, the proofs for semiflows on a finite set are
considerably simpler than the corresponding ones for continuous dynamical
systems. What is missing in the context of semiflows is first of all the group
property of a flow, and hence limit objects for $t\rightarrow-\infty$. This
results in missing some of the invariance properties of crucial sets, such as
limit sets, Morse sets, the (components of) the recurrent set, etc. And
secondly, the use of the discrete topology implies that while all points in
the (finest) Morse sets are limit points, not all $\omega-$limit points of the
semiflow are contained in the (finest) Morse sets. But those exceptional limit
points (and hence the set of all limit points) can be characterised, compare
Remark \ref{Morse&omegalimit}.


\section{Matrices associated with graphs and their semiflows\label{matrix}}

In the previous sections we have analyzed the communication
structure of graphs using two different mathematical languages, that
of graph theory and that of dynamical systems. In this section we
will briefly use yet another language, matrices and linear algebra.
Connections between graphs and nonnegative matrices have been
studies extensively in the literature, compare \cite{BNS89, BP94, R06}. We
will describe some connections and hint at algorithms that allow for
the computation of the objects discussed in the previous sections.

\begin{definition}
\label{defadjmatrix}Given a graph $G=(V,E)$ with $\#V=d$. The
adjacency matrix
$A_{G}=(a_{ij})$ of $G$ is the $d\times d$ matrix with elements%
\[
a_{ij}=\left\{
\begin{array}
[c]{rl}%
1 & \text{if}\,(i,j)\in E\\
0 & \text{otherwise.}%
\end{array}
\right.
\]

\end{definition}

\noindent Vice versa, denote the set of $d\times d$ matrices whose
entries are in $\{0,1\}$ by $M(d,\{0,1\})$. Any matrix $A\in
M(d,\{0,1\})$ is called an adjacency matrix and can be viewed as
representing a graph. 

\begin{definition}
\label{deflmatrix}An adjacency matrix is called an $L-$matrix if
each row has at least one entry equal to $1$.
\end{definition}

\begin{remark}
\label{path&product}Alternatively, the entries $a_{ij}\in A_{G}$ can
be viewed as paths in $G$ of length $1$: $a_{ij}=1$ iff there exists
an edge $\gamma \in\Gamma^{1}$ with $\pi_{0}(\gamma)=i$ and
$\pi_{1}(\gamma)=j$. Continuing this thought we have the following
relationship between paths of length $n\geq1$ and entries of
$A_{G}^{n}$, the $n-$th power of $A_{G}$: $a_{ij}^{(n)}\in
A_{G}^{n}$ is exactly the number of (different) paths
$\gamma\in\Gamma^{n}$ from $i$ to $j$ in $G$. Hence the $i-$th row
of $A_{G}^{n}$ describes exactly the vertices that can be reached
from $i$ via a path of length $n$. In complete analogy, the $i-$th
row of $(A_{G}^{T})^{n}$ describes exactly the vertices from which
$i$ can be reached using a path of length $n$. 
\end{remark}

\noindent The communication concepts developed in Section \ref{orbits&cc} only depend on the existence of
paths connecting certain vertices and not on the number of such
paths. We use the standard Boolean addition $+^{\ast}$\ and multiplication
$\cdot^{\ast}$\ to describe these ideas. We extend the Boolean addition and multiplication for matrices in $M(d,\{0,1\})$ in the obvious way,
denoting by $A^{n\ast}$ the $n-$th Boolean product of $A\in
M(d,\{0,1\})$ with itself. Note that $M(d,\{0,1\})$ is closed under
Boolean addition and multiplication. Since computer calculations
involving $+^{\ast}$ and $\cdot^{\ast}$\ are very fast, we obtain
efficient algorithms for the computation of orbits and communicating
classes.

\noindent Let $G=(V,E)$ be a graph with $\#V=d$ and adjacency matrix
$A$. For
a vertex $i\in V$ its positive and negative orbits are given by%
\begin{align*}
O^{+}(i)  & =\{j\in V\text{, }A_{ij}^{n\ast}=1\text{ for some }n=1,...,d\}\\
O^{-}(i)  & =\{j\in V\text{, }(A^{T})_{ij}^{n\ast}=1\text{ for some
}n=1,...,d\}\text{.}%
\end{align*}
The proof of these facts follows directly from Section
\ref{orbits&cc} and Remark \ref{path&product} above. The
communicating classes of $G$ can now be computed as
$C=\mathcal{O}^{+}(i)\cap\mathcal{O}^{-}(i)$ for $i\in V$, compare
Theorem \ref{cc=orbit}. The order among communicating classes can be
determined directly from the computation of $O^{+}(i)$ (or
$O^{-}(i)$), compare Remark \ref{ccinorbit}\ and Definition
\ref{deforder}. Communicating classes and their order are also
sufficient to compute the quotient graphs $G_{q}$ and $G_{Q}$ of a
given graph $G$. Hence all the objects analyzed in Section
\ref{orbits&cc} can be computed effectively using the matrix ideas
described above.

\noindent The concepts of irreducibility and aperiodicity play an
important role in the analysis of nonnegative matrices. We briefly
introduce these concepts and discuss their use in the analysis of
communication structures in graphs.

\begin{definition}
\label{defmatrixirred}A matrix $A\in M(d,\{0,1\})$ is said to be
irreducible
if it is not permutation similar to a matrix having block-partition form%
\[%
\begin{pmatrix}
A_{11} & A_{12}\\
0 & A_{22}%
\end{pmatrix}
\]
with $A_{11}$ and $A_{22}$ square.
\end{definition}

\begin{lemma}
Let $G$ be a graph with adjacency matrix $A$. Then $A$ is
irreducible iff $G$ consists of exactly one communicating class.
\end{lemma}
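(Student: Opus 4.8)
The plan is to establish the biconditional by proving both directions contrapositively, translating the matrix-theoretic notion of reducibility into the graph-theoretic language of communicating classes via the reachability structure encoded in the powers of $A$, as described in Remark \ref{path&product}.

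First I would prove the forward direction: if $A$ is reducible, then $G$ has more than one communicating class (or fails to be a single one, e.g.\ by having a transitory vertex). Suppose $A$ is permutation similar to a block upper-triangular matrix with square diagonal blocks $A_{11}$ and $A_{22}$ and a zero lower-left block. A permutation similarity simply relabels the vertices, so after relabeling we may partition $V = V_1 \cup V_2$ (corresponding to the row/column indices of $A_{11}$ and $A_{22}$) with both parts nonempty. The zero block means there is no edge from any vertex of $V_2$ into any vertex of $V_1$; hence there is no path from $V_2$ to $V_1$. Consequently no vertex $i \in V_1$ and $j \in V_2$ can satisfy $i \sim j$, since mutual access is impossible. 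It follows that no communicating class can meet both $V_1$ and $V_2$, so $G$ cannot consist of a single communicating class spanning all of $V$.

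For the converse I would again argue contrapositively: if $G$ does not consist of exactly one communicating class, then $A$ is reducible. The cleanest route is to use the order structure from Definition \ref{deforder}. If there is more than one communicating class, pick a forward invariant communicating class $C$ (its existence as a maximal element is guaranteed, though here I only need a proper subset closed under the relevant reachability). Set $V_1 = V \setminus \mathcal{O}^{-}(C)$ and $V_2 = \mathcal{O}^{-}(C) \supseteq C$, or more simply take $V_2$ to be the union of $C$ with everything that can reach $C$ and $V_1$ its complement; by forward invariance of $C$ there are no edges leaving $\mathcal{O}^{-}(C)$ toward $V_1$ in the reverse direction, giving a zero block. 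I would then permute the rows and columns of $A$ so that indices in $V_1$ precede those in $V_2$, which produces the required block upper-triangular form with a zero lower-left block, establishing reducibility. I should check that both $V_1$ and $V_2$ are nonempty, which follows from there being at least two distinct communicating classes (or a transitory vertex together with a class).

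The main obstacle will be handling the subtlety introduced earlier in the paper that vertex communication is \emph{not} an equivalence relation in general, so $V$ need not be covered by communicating classes---transitory vertices exist. I must phrase ``$G$ consists of exactly one communicating class'' carefully: it should mean $V = V_c$ and $V_c/\!\sim$ is a single class, so in particular there are no transitory vertices. The delicate point is exhibiting the correct bipartition $V = V_1 \cup V_2$ with a one-directional reachability barrier when the obstruction to a single class might be a transitory vertex rather than a second class. I expect that choosing $V_2$ as a forward invariant set (closed under $\mathcal{O}^{+}$, which exists by Proposition \ref{lficc} in the $L$-graph case, and whose complement is then nonempty) and $V_1$ as its complement gives the zero block in the right corner, but verifying that this partition genuinely forces reducibility---rather than merely permitting the block form---requires care about which corner vanishes and about nonemptiness of both blocks.
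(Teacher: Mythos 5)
You should first note that the paper does not actually prove this lemma at all --- it defers to \cite{BP94} --- so your argument stands or falls on its own merits. Your forward direction is correct as written. The converse, however, has a genuine gap: you anchor the bipartition at a communicating class $C$, but under the paper's non-reflexive communication relation the negation of ``exactly one communicating class'' includes graphs with \emph{no} class at all (any graph without loops, e.g.\ the single edge $1\to 2$), in which case your $C$ does not exist; Proposition \ref{lficc} cannot rescue this, since it presupposes an $L$-graph, an assumption the lemma does not make. Moreover, even when a class exists, nonemptiness of $V_1$ can fail for your choice of $V_2=\mathcal{O}^-(C)\cup C$: take a loop at $1$ together with the edge $2\to 1$; the unique class is $C=\{1\}$ and $\mathcal{O}^-(C)\cup C=V$, so $V_1=\varnothing$. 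Both defects are repaired by anchoring at a \emph{vertex} rather than a class: if $d\geq 2$ and $G$ is not a single class spanning $V$, there exist $i\neq j$ such that $i$ has no access to $j$ (if every ordered pair of distinct vertices had mutual access, concatenation would produce a loop through each vertex, forcing all of $V$ into one class); then $V_2:=\{j\}\cup\mathcal{O}^-(j)$ contains $j$, excludes $i$, and admits no edge from $V_1=V\setminus V_2$ into $V_2$, exactly as in your backward-closure argument. For $d=1$ the lemma is convention-dependent: $A=(0)$ is irreducible under Definition \ref{defmatrixirred} as literally stated (no block partition exists), yet the lone vertex is transitory, so one must adopt the usual convention that the $1\times 1$ zero matrix is reducible.

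Separately, you misidentify which corner vanishes --- a point you yourself flagged as delicate. Backward closure of $V_2=\mathcal{O}^-(C)\cup C$ guarantees there are no edges from $V_1$ \emph{into} $V_2$; it does not prevent edges leaving $\mathcal{O}^-(C)$ toward $V_1$, since a transitory vertex of $\mathcal{O}^-(C)\setminus C$ may perfectly well have an edge into $V_1$, and forward invariance of $C$ says nothing about $\mathcal{O}^-(C)$. Consequently, listing $V_1$ before $V_2$ yields a block \emph{lower}-triangular matrix (zero upper-right block), not the form required by Definition \ref{defmatrixirred}; you must list $V_2$ first, or note that reversing the index order is itself a permutation similarity exchanging the two triangular forms, so reducibility follows either way. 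With the vertex-anchored partition and the corrected ordering, your contrapositive strategy is sound and amounts to the standard textbook proof the paper cites.
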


\noindent A proof of this lemma can be found, e.g., in \cite{BP94}.
Note that the adjacency matrix of any $L-$graph is permutation
equivalent to a matrix of
the form%
\begin{equation}
A_{Q}=%
\begin{pmatrix}
A_{11} & A_{12} & \cdot & \cdot & \cdot & A_{1l}\\
0 & A_{22} &  &  &  & A_{2l}\\
\cdot &  & \cdot &  &  & \cdot\\
\cdot &  &  & \cdot &  & \cdot\\
\cdot &  &  &  & \cdot & \cdot\\
0 & \cdot & \cdot & \cdot & 0 & A_{ll}%
\end{pmatrix}
\label{matrixcc}%
\end{equation}
where the square blocks $A_{ii}$, $i=1,...,l$ correspond to the
vertices within the communicating class $C_{i}$ for $i=1,...,k$ and
to the transitory vertices, and the blocks $A_{ij}$ for $j>i$
determine the order structure among the communicating classes. Hence
$A_{Q}$ ``is" the adjacency matrix of the extended quotient graph
$G_{Q}$, compare Section \ref{quograph}. For additional
characterizations of irreducible nonnegative matrices we refer to
\cite{R06}, Chapter 9.2, Fact 2.

\begin{definition}
\label{defmatrixperiod}Let $A\in M(d,\{0,1\})$ be irreducible with
associated graph $G$. The period of $A$ is defined to be the
greatest common divisor of the length of loops of $G$. If this
period is $1$, the matrix is said to be aperiodic. We say that a
graph is periodic of period $p$ (or aperiodic) if its adjacency
matrix has this property.
\end{definition}

\begin{lemma}
A matrix $A\in M(d,\{0,1\})$ is aperiodic iff $A^{n}>0$ (has all
elements $>0 $) for some $n\in\mathbb{N}$.
\end{lemma}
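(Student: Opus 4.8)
The plan is to prove both implications by translating the matrix condition $A^n>0$ into the language of walks via Remark \ref{path&product}: since $a_{ij}^{(n)}$ counts the paths of length $n$ from $i$ to $j$, the condition $A^n>0$ is equivalent to the statement that for every ordered pair $(i,j)$ there is a walk of length exactly $n$ from $i$ to $j$. Because aperiodicity is only defined for irreducible matrices (Definition \ref{defmatrixperiod}), I may assume throughout that $A$ is irreducible, which by the preceding lemma means that $G$ is a single communicating class. In particular $G$ is strongly connected, and by Lemma \ref{ccloop} every vertex lies on a loop, so $A$ has no zero rows.

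For the implication $A^n>0 \Rightarrow$ aperiodic, I would first note that, since $A$ has no zero row, prefixing any walk of length $n$ from some $k$ to $j$ by an edge $(i,k)\in E$ produces a walk of length $n+1$ from $i$ to $j$; hence $A^{n+1}>0$ as well. Taking $i=j$ in both $A^n>0$ and $A^{n+1}>0$ yields loops of lengths $n$ and $n+1$ through every vertex. Since the period is the greatest common divisor of all loop lengths and $\gcd(n,n+1)=1$, the period must equal $1$, so $A$ is aperiodic.

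The converse is the substantive direction. For a fixed vertex $i$, let $L_i$ denote the set of lengths of loops based at $i$; concatenation of loops shows $L_i$ is closed under addition. The first key step is to prove that $\gcd(L_i)=1$ for every $i$, i.e. that the \emph{local period} at each vertex coincides with the global period of $G$. This I would establish using strong connectivity: choosing paths $i\to j$ and $j\to i$ of lengths $p,q$, the numbers $p+q$ and $p+\ell+q$ both lie in $L_i$ for any loop length $\ell$ at $j$, so $\gcd(L_i)$ divides $\ell$; running over all loops shows the local gcd is independent of the vertex and equals the period, which is $1$ by hypothesis. The second key step is the number-theoretic fact that an additively closed set of positive integers with gcd $1$ contains all sufficiently large integers; applied to $L_i$ this gives $N_i$ with $A_{ii}^{m}>0$ for all $m\ge N_i$. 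Finally I would splice in strong connectivity: for each pair $(i,j)$ fix a path of length $p_{ij}\le d$, and for $n$ large concatenate a loop at $i$ of length $n-p_{ij}$ with this path to obtain a walk of length $n$ from $i$ to $j$. Choosing any $n\ge \max_i N_i + d$ makes $A_{ij}^n>0$ for all $i,j$ simultaneously, i.e. $A^n>0$.

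I expect the main obstacle to be the combination of the two ingredients in the converse: the equality of local and global periods (which is where strong connectivity and the additive structure of $L_i$ are genuinely used), together with the numerical semigroup lemma that turns ``gcd one'' into ``cofinite''. The remaining bookkeeping---ensuring that a single exponent $n$ works uniformly over all ordered pairs $(i,j)$, which relies on the uniform bound $p_{ij}\le d$ on path lengths in a graph with $d$ vertices---is routine once these two facts are in hand.
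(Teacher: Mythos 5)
Your proof is correct, but note that the paper itself contains no proof of this lemma to compare against: it simply cites Berman--Plemmons \cite{BP94}. Your write-up supplies, in full, the classical argument that the cited reference contains, and all the steps check out. The forward direction is handled cleanly: the no-zero-rows observation upgrades $A^{n}>0$ to $A^{n+1}>0$, producing closed walks of coprime lengths $n$ and $n+1$ through every vertex, so the period divides $\gcd(n,n+1)=1$. For the converse, the two substantive ingredients are exactly where you locate them: (i) the equality of the local gcd $\gcd(L_{i})$ with the global period, obtained by conjugating a loop of length $\ell$ at $j$ by paths $i\to j$ and $j\to i$ of lengths $p,q$ so that $\gcd(L_{i})$ divides $(p+\ell+q)-(p+q)=\ell$ for every loop in $G$; and (ii) the numerical-semigroup fact that an additively closed set of positive integers with gcd $1$ contains all sufficiently large integers. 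The uniform splice with $p_{ij}\le d$ then yields a single exponent $n\ge\max_{i}N_{i}+d$ working for all pairs. Two cosmetic refinements: in the direction $A^{n}>0\Rightarrow$ aperiodic, rather than assuming irreducibility from Definition \ref{defmatrixperiod}, it is cleaner to observe that $A^{n}>0$ itself forces strong connectivity of $G$, hence irreducibility by the preceding lemma, so the hypothesis is automatic in that direction; and in the final step you should say what $p_{ii}$ is when $i=j$ (either the empty path of length $0$, or a shortest loop through $i$, which in a strongly connected graph on $d$ vertices has length at most $d$). Neither affects correctness. Your identification of positivity of $a_{ij}^{(n)}$ with existence of a length-$n$ walk via Remark \ref{path&product} is the right bridge between the matrix statement and the graph statement, and is used correctly throughout.
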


\noindent A proof of this lemma can be found, e.g., in \cite{BP94}.
For additional characterizations of aperiodic matrices we refer to
\cite{R06}, Chapter 9.2, Fact 3. One can extend this definition to
any communicating class of a graph $G$: Let $C\subset V$ be a
communicating class of $G$ and $A_{C}$ its diagonal block in the
representation (\ref{matrixcc}) of the adjacency matrix $A_{G}$.
Note that $A_{C}$ is irreducible and we define the period of $C$ to
be the period of $A_{C}$.

\noindent The rest of this section is devoted to studying some of
the connections between the semiflow of a graph and the adjacency
matrix. Since
the semiflow is a sequence of maps $\Phi_{G}(n,\cdot):\mathcal{P}%
(V)\rightarrow\mathcal{P}(V)$, $n\in\mathbb{N}$, we first need to
define the analogue of $\mathcal{P}(V)$. For a graph $G=(V,E)$ with
$\#(G)=d$, we can proceed as follows:

\noindent For a subset $A\subset V$ let $\chi_{A}$ denote its
characteristic
function, i.e.%
\[
\chi_{A}(i)=\left\{
\begin{array}
[c]{cc}%
1 & \text{ if }i\in A\\
0 & \text{if }i\notin A\text{.}%
\end{array}
\right.
\]
Let $e_{i}$ be the $i-th$ canonical basis vector of
$\mathbb{R}^{d}$. Define
$\iota:\mathcal{P}(V)\rightarrow\mathbb{R}^{d}$ by
\[
\iota(A)=\sum\nolimits_{i=1}^{d}\chi_{A}(i)e_{i}\text{.}%
\]
We denote by $\mathcal{Q}^{d}$ the (vertex set of the) unit cube in
$\mathbb{R}^{d}$. Note that
$\iota:\mathcal{P}(V)\rightarrow\mathcal{Q}^{d}$ is bijective and
hence we can identify $\mathcal{P}(V)$ with $\mathcal{Q}^{d}$ as
sets. We will use the same notation for the two versions of the map
$\iota$.

\noindent With these notations we can express the semiflow
$\Phi_{G}$ in terms of the adjacency matrix $A$: For paths of $G$ of
length $1$, i.e. for edges we
have $\mathcal{O}_{1}^{+}(i):=\{j\in V$, $(i,j)\in E\}=\iota^{-1}%
(\iota(\{i\})^{T}\cdot^{\ast}A)$. The set $\mathcal{O}_{1}^{+}(i)$
is also called the orbit of $i$ at time $1$. Similarly we have for
$W\subset V$: $\mathcal{O}_{1}^{+}(W):=\{j\in V$, $(i,j)\in E$ for
some $i\in W\}=\iota ^{-1}(\iota(W)^{T}\cdot^{\ast}A)$. By Remark
\ref{path&product} we obtain for
paths of length $n\geq1$%
\[
\mathcal{O}_{n}^{+}(W):=\left\{
\begin{array}
[c]{cc}%
j\in V\text{,} & \text{there are }i\in W\text{ and
}\gamma\in\Gamma^{n}\text{
such}\\
& \text{that }\pi_{0}(\gamma)=i\text{ and }\pi_{n}(\gamma)=j
\end{array}
\right\}  =\iota^{-1}(\iota(W)^{T}\cdot^{\ast}A^{n\ast})\text{.}%
\]
The definition of the associated (positive) semiflow $\Phi_{G}$ in
Equation (\ref{defflow+}) now yields the following alternative way
of describing this
semiflow%
\begin{equation}
\Phi_{G}(n,W)=\iota^{-1}((\iota(W)^{T}\cdot^{\ast}A^{n\ast})^{T}%
)\text{.}\label{flowident}%
\end{equation}
This observation justifies the following definition:

\begin{definition}
\label{defmatrixflow}Consider a matrix $A\in M(d,\{0,1\})$ and let
$\mathcal{Q}^{d}\subset\mathbb{R}^{d}$ be the $d-$dimensional unit
cube. The
map%
\[
\Psi_{A}:\mathbb{N}\times\mathcal{Q}^{d}\rightarrow\mathcal{Q}^{d}\text{,}%
\]%
\[
\Psi_{A}(n,q)=(q^{T}\cdot^{\ast}A^{n\ast})^{T}%
\]
is called the positive semiflow of $A$. Similarly, the map
\[
\Psi_{A}^{-}:\mathbb{N}^{-}\times\mathcal{Q}^{d}\rightarrow\mathcal{Q}%
^{d}\text{,}%
\]%
\[
\Psi_{A}^{-}(n,q)=(q^{T}\cdot^{\ast}(A^{T})^{n\ast})^{T}%
\]
is called the negative semiflow of $A$.
\end{definition}

\noindent It follows from (\ref{flowident}), Proposition
\ref{semiflow} and from bijectivity of
$\iota:\mathcal{P}(V)\rightarrow\mathcal{Q}^{d}$ that $\Psi_{A}$ and
$\Psi_{A}^{-}$ are, indeed, semiflows. This allows us to reinterpret
all concepts and results from Sections \ref{graphsemiflow} -
\ref{arsemiflow} for semiflows of square $\{0,1\}-$matrices.
Alternatively, we could have developed the theory for semiflows of
the type $\Psi_{A}$ and then translated the results to graphs. The
key condition in Sections \ref{graphsemiflow}-\ref{arsemiflow} is
for a graph to be an $L-$graph, which translates into $L-$matrices,
see Definition \ref{deflmatrix}.

\noindent To complete this chapter, we mention a few connections
between the semiflow $\Phi_{G}$ of an $L-$graph and concepts from
matrix theory:

\noindent Let $G=(V,E)$ be an $L-$graph with associated semiflow
$\Phi:\mathbb{N}\times\mathcal{P}(V)\rightarrow\mathcal{P}(V)$.
Let $A$ be
the adjacency matrix of $G$ with associated semiflow $\Psi_{A}:\mathbb{N}%
\times\mathcal{Q}^{d}\rightarrow\mathcal{Q}^{d}$. Then it holds:

\begin{enumerate}
\item $G$ has exactly one communicating class iff $\Phi$ has only the trivial
Morse decomposition $\{\varnothing,V\}$ iff $A$ is irreducible.

\item A vertex $i\in V$ is in a communicating class $C\subset V$ iff $\{i\}$
is recurrent under $\Phi$ iff
$(\sum\nolimits_{n=1}^{d-1}A^{n\ast})_{ii}>0$.

\item For two communicating classes $C_{\mu}\preceq C_{\nu}$ holds iff their
corresponding Morse sets satisfy
$\mathcal{M}_{\mu}\preceq\mathcal{M}_{\nu}$ iff the adjacency matrix
$A_{\mu\mu}$ of the subgraph corresponding to $C_{\mu}$ has a
smaller index in the representation (\ref{matrixcc}) than
$A_{\nu\nu}$.
\end{enumerate}

\noindent In the next chapter we will study Markov chains and
interpret many of the results we have obtained so far in that
context. We prefer to include some elementary results and definitions on Markov chains to facilitate exposition.


\section{Characterization of Markov Chains via Graphs and Semiflows}

Markov chains are discrete time stochastic processes for which the
future is conditionally independent of the past, given the presence. If the state space of a Markov chain is a finite set, its
probabilistic behavior can be analyzed using specific graphs and/or matrices. The goal of this section is to utilize the concepts and theory developed in Section 3 for the analysis of finite state Markov chains. This allows us to restate some well-known properties of Markov chains using graphs and semiflows, and to show a few new connections. We end up this work by setting up a \textquotedblleft three-column dictionary\textquotedblright of equivalent objects and results in the three \textquotedblleft languages\textquotedblright.

\subsection{Review of Markov chains}

In this section we present without proofs some standard results in the theory of Markov chains. We refer the reader
to \cite{M94, S99, TK94} for more details on
finite chains, and to \cite{AL06} Chapter 14.1, for a thorough
discussion of chains on countable state spaces. 

\noindent Let $(\Omega,\mathcal{F},\mathbb{P})$ be a probability
space and $S$ a finite set with cardinality $\#(S)=d$. 
A discrete
time stochastic process on $\Omega$ with values in the state space
$S$ is a sequence of random variables $X_{n}:\Omega\rightarrow S$,
$n\in\mathbb{N}$. For each $\omega\in\Omega$ the sequences
$(X_{n}(\omega)$, $n\in\mathbb{N)}$ are called trajectories of the
process. Recall that a Markov chain with values in $S$ is a discrete time stochastic
process satisfying the Markov property, i.e.
\[
\mathbb{P}\{X_{n+1}=j\mid X_{0}=i_{0},...,X_{n-1}=i_{n-1},X_{n}=i\}=\mathbb{P}%
\{X_{n+1}=j\mid X_{n}=i\}
\]
for all times $n\in\mathbb{N}$ and all states
$i_{0},...,i_{n-1},i,j\in S$.

Recall the so-called Chapman-Kolmogorov
equation for the
n-step transition probabilities is given by:
\begin{equation}
p_{n}(i,j)=\mathbb{P}\{X_{n}=j\mid X_{0}=i\}=\underset{k\in S}{\sum}%
p_{r}(i,k)p_{n-r}(k,j)\label{defCKequation}%
\end{equation}
for $1<r<n$.

%
%
%
\begin{definition}
\label{defmccommun}A state $i\in S$ has access to a state $j\in S$
if $p_{n}(i,j)>0$ for some $n\geq0$. A state $i\in S$ communicates
with a state $j\in S$ if $p_{n}(i,j)>0$ and $p_{m}(j,i)>0$ for some
$n,m\geq0$.
\end{definition}
%
%
%
%
\begin{definition}
\label{defmcperiod}For a state $i\in S$ we define its period by
\[
\delta(i)=\gcd\{n\geq1\text{, }p_{n}(i,i)>0\}\text{,}%
\]
where $\gcd$ denotes the greatest common divisor. Then $i$ is called
periodic if $\delta(i)>1$, and aperiodic if $\delta(i)=1$. A Markov
chain is said to be aperiodic if all points $i\in S$ are aperiodic.
\end{definition}
%

\noindent A crucial idea in the analysis of the qualitative behavior
of stochastic processes is that of reachability, i.e., trajectories
starting at one point reach (a neighborhood of) another point. For
Markov chains this idea takes the form of hitting times: For
$A\subset S$ we define the first hitting
time of $A$ as the random variable%
\[
\tau_{A}(\omega):=\inf\{n\geq1\text{, }X_{n}(\omega)\in A\}\text{,}%
\]
with the understanding that $\tau_{A}(\omega)=\infty$ if the $\inf$
does not exist. We often drop the variable $\omega$ and denote the
(conditional)
distributions of the first hitting times by%
\[
f_{n}(i,A):=\mathbb{P}\{\tau_{A}=n\mid X_{0}=i\}\text{.}%
\]
Using this idea we can generalize the Chapman-Kolmogorov equation to random intermediate time points as follows
(recall
that we use the notation $P^{0}=I$)%
\begin{equation}
p_{n}(i,j)=\underset{r=1}{\overset{n}{\sum}}f_{r}(i,j)p_{n-r}(j,j)\text{
\ \ for }n\geq1\text{.}\label{randomCKequation}%
\end{equation}
In particular, the stochastic process with transition probability
matrix $P$ and initial variable $\tau_{A}$ is again a Markov chain
for any $A\subset S$.

\noindent The idea of a first hitting time leads to the definition
of a sequence of random variables of subsequent visits to a state or
set of states: Define
$\tau_{A}^{(m+1)}(\omega):=\inf\{n>\tau_{A}^{(m)}(\omega) $,
$X_{n}(\omega)\in A\}$ for $m\geq1$, with
$\tau_{A}^{(1)}(\omega)=\tau _{A}(\omega)$. With this notation we
can write the number of visits of a set
$A\subset S$ up to time $m\geq1$ as%
\[
N_{A}(m,\omega):=\underset{n=1}{\overset{m}{\sum}}\chi_{A}(X_{n}%
(\omega))\text{,}%
\]
and the total number of visits as
\[
N_{A}(\omega):=\underset{n\geq1}{\sum}\chi_{A}(X_{n}(\omega))=\#\{m\geq
1\text{, }\tau_{A}^{(m)}<\infty\}\text{,}%
\]
where $\chi_{A}$ denotes again the characteristic function of a set
$A$.

\noindent Two other concepts derived from first hitting times play a
role in the analysis of Markov chains: The first one is the
(conditional) probability of reaching a set of states $A$ from a
state $i$, i.e.
\[
\mathbb{P}\{\omega\in\Omega,X_{n}(\omega)\in A\text{ for some
}n\geq1\mid
X_{0}=i\}=\underset{n\geq1}{\sum}f_{n}(i,A)=:F(i,A)\text{.}%
\]
The other useful probabilistic concept is that of moments, where we
will use
only the first moment, i.e. the mean first hitting time%
\[
\mu(i,A):=\underset{n\geq1}{\sum n}f_{n}(i,A)\text{.}%
\]

\noindent With these preparations we can define the ideas of
recurrence and transience. When talking about points $j\in S$ we
often use the notation $f_{n}(i,j)$, $F(i,j)$, $\mu(i,j)$ instead of
$f_{n}(i,\{j\})$ etc.

\begin{definition}
\label{defrecurmc}A state $i\in S$ is called recurrent, if $\mathbb{P}%
\{\tau_{i}<\infty\mid X_{0}=i\}=F(i,i)=1$. States that are not
recurrent are called transient.

\noindent A recurrent state $i\in S$ satisfying
$\mathbb{E}(\tau_{i}\mid X_{0}=i)=\mu(i,i)<\infty$ is called
positive recurrent, the other recurrent states are called null
recurrent. Here we denote by $\mathbb{E}(Y\mid X_{0}=i)$ the
conditional expectation of a random variable $Y$ under the measure
$\mathbb{P}(\cdot\mid X_{0}=i)$.
\end{definition}

\noindent We list some standard results regarding the classification
of states in Markov chains, compare e.g., \cite{M94, S99}
for the proofs.

\begin{theorem}
\label{mcrectrans}Let $(X_{n})_{n\in\mathbb{N}}$ be a Markov chain
on the finite state space $S$.

\begin{enumerate}
\item A state $j\in S$ is recurrent iff
\[
\underset{n\in\mathbb{N}}{\overset{}{\sum}}p_{n}(j,j)=\infty\text{.}%
\]

\item If the state $j\in S$ is recurrent then
\[
\mathbb{P}\{N_{j}=\infty\mid X_{0}=j\}=1\text{.}%
\]

\item A state $j\in S$ is transient iff
\[
\underset{n\in\mathbb{N}}{\overset{}{\sum}}p_{n}(j,j)<\infty\text{.}%
\]

\item If the state $j\in S$ is transient then it holds for any $i\in S$
\[
\mathbb{P}\{N_{j}<\infty\mid X_{0}=i\}=1\text{ \ \ and}%
\]%
\[
\mathbb{E}(N_{j}\mid X_{0}=i)<\infty\text{.}%
\]

\item If the state $j\in S$ is recurrent and periodic of period $\delta$ then%
\[
\lim_{n\rightarrow\infty}p_{n\delta}(j,j)=\frac{\delta}{\mu(j,j)}\text{,}%
\]
in particular, for aperiodic states we have
\[
\lim_{n\rightarrow\infty}p_{n}(j,j)=\frac{1}{\mu(j,j)}\text{.}%
\]

\item If the state $j\in S$ is positive recurrent and aperiodic, then for
$i\in S$ arbitrary we have%
\[
\underset{n\rightarrow\infty}{\lim}p_{n}(i,j)=\frac{F(i,j)}{\mu(j,j)}\text{.}%
\]

\end{enumerate}
\end{theorem}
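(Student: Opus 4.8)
The plan is to reduce all six statements to the analysis of a single pair of power series attached to the state $j$, namely the generating functions $P_{jj}(s)=\sum_{n\geq 0}p_{n}(j,j)\,s^{n}$ and $F_{jj}(s)=\sum_{n\geq 1}f_{n}(j,j)\,s^{n}$, defined for $0\leq s<1$ (recall the convention $p_{0}(j,j)=1$). Specializing the random Chapman--Kolmogorov identity (\ref{randomCKequation}) to $i=j$ and summing against $s^{n}$ yields the renewal relation
\[
P_{jj}(s)=1+F_{jj}(s)\,P_{jj}(s),\qquad\text{hence}\qquad P_{jj}(s)=\frac{1}{1-F_{jj}(s)}.
\]
Since $f_{n}(j,j)\geq 0$, Abel's theorem gives $F(j,j)=\lim_{s\rightarrow 1^{-}}F_{jj}(s)$ and likewise $\sum_{n}p_{n}(j,j)=\lim_{s\rightarrow 1^{-}}P_{jj}(s)=\frac{1}{1-F(j,j)}$. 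Recurrence of $j$ means $F(j,j)=1$ by Definition \ref{defrecurmc}, so the sum diverges; transience means $F(j,j)<1$, so the sum equals $\frac{1}{1-F(j,j)}<\infty$. This settles statements (1) and (3) simultaneously, the two being complementary halves of the recurrent/transient dichotomy.

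For (2) and (4) I would pass from the generating identity to the exact distribution of the number of visits via the strong Markov property. Each successive return to $j$ occurs, independently of the past, with probability $F(j,j)$, so starting from $j$ one has $\mathbb{P}\{N_{j}\geq m\mid X_{0}=j\}=F(j,j)^{m}$ for all $m\geq 1$. When $j$ is recurrent, $F(j,j)=1$ forces this probability to equal $1$ for every $m$, whence $\mathbb{P}\{N_{j}=\infty\mid X_{0}=j\}=1$, which is (2). When $j$ is transient, $F(j,j)<1$ gives a geometric tail, so $N_{j}<\infty$ almost surely and $\mathbb{E}(N_{j}\mid X_{0}=j)=\sum_{m\geq 1}F(j,j)^{m}=\frac{F(j,j)}{1-F(j,j)}<\infty$; conditioning on the first visit to $j$ then yields $\mathbb{E}(N_{j}\mid X_{0}=i)=F(i,j)\bigl(1+\mathbb{E}(N_{j}\mid X_{0}=j)\bigr)<\infty$ for arbitrary $i\in S$, establishing (4).

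Statements (5) and (6) require genuine renewal theory, and this is where the main difficulty lies. Viewing the successive return times to $j$ as a renewal process whose inter-arrival law is $(f_{n}(j,j))_{n\geq 1}$, with mean $\mu(j,j)$ and support having greatest common divisor $\delta$ (the period of $j$ in the sense of Definition \ref{defmcperiod}), one identifies $p_{n}(j,j)$ with the renewal mass at time $n$. The discrete renewal theorem (Erd\H{o}s--Feller--Pollard) then gives $\lim_{n\rightarrow\infty}p_{n\delta}(j,j)=\frac{\delta}{\mu(j,j)}$, which is (5), the aperiodic case $\delta=1$ being the stated special case. For (6) I would decompose along the first visit to $j$ by (\ref{randomCKequation}), writing $p_{n}(i,j)=\sum_{r=1}^{n}f_{r}(i,j)\,p_{n-r}(j,j)$, and pass to the limit termwise: for each fixed $r$ the factor $p_{n-r}(j,j)\rightarrow \frac{1}{\mu(j,j)}$ by (5) in the aperiodic case, while the summable majorant $f_{r}(i,j)$ (with $\sum_{r\geq 1}f_{r}(i,j)=F(i,j)$) dominates the convolution, so a dominated-convergence argument yields $\lim_{n\rightarrow\infty}p_{n}(i,j)=\frac{F(i,j)}{\mu(j,j)}$.

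The hard part will be establishing the renewal limit in (5), in particular controlling the periodic case and justifying the interchange of limit and summation in the convolution appearing in (6); the remaining statements reduce to elementary manipulation of the identity $P_{jj}=1/(1-F_{jj})$ together with the strong Markov property. I note that finiteness of $S$ is not needed for any one of the six assertions individually, though it guarantees that recurrent states exist and that positive recurrence coincides with recurrence (null recurrence being impossible on a finite set), so that the hypothesis of (6) is automatically met on each communicating class.
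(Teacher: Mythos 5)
The paper does not actually prove Theorem \ref{mcrectrans}: it is introduced with the words ``We list some standard results regarding the classification of states in Markov chains, compare e.g., \cite{M94, S99} for the proofs,'' so there is no internal argument to compare against. Your proposal is the standard textbook proof found in exactly those references, and it is correct as a plan: the renewal identity $P_{jj}(s)=1/(1-F_{jj}(s))$, obtained by summing the first-passage decomposition (the $i=j$ case of (\ref{randomCKequation}), together with the convention $p_{0}(j,j)=1$) against $s^{n}$, plus monotone/Abel limits as $s\rightarrow 1^{-}$, settles (1) and (3); the strong Markov property gives the geometric-trials formula $\mathbb{P}\{N_{j}\geq m\mid X_{0}=j\}=F(j,j)^{m}$, from which (2) and (4) follow, and your conditioning identity $\mathbb{E}(N_{j}\mid X_{0}=i)=F(i,j)\bigl(1+\mathbb{E}(N_{j}\mid X_{0}=j)\bigr)=F(i,j)/(1-F(j,j))$ is right; and the first-passage convolution $p_{n}(i,j)=\sum_{r=1}^{n}f_{r}(i,j)p_{n-r}(j,j)$ with the bound $p_{n-r}(j,j)\leq 1$ and $\sum_{r}f_{r}(i,j)=F(i,j)\leq 1$ legitimately justifies the termwise passage to the limit in (6). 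Two caveats. First, your argument for (5) is a reduction, not a proof: the Erd\H{o}s--Feller--Pollard renewal theorem \emph{is} statement (5) once returns to $j$ are viewed as a renewal process, so you have relocated rather than discharged the hard analytic content (Tauberian or coupling arguments); since the paper itself cites the literature for this theorem, that is a defensible stopping point, but you should say explicitly that EFP is being taken as known. Second, a one-line verification is missing: the period $\delta(j)=\gcd\{n\geq 1: p_{n}(j,j)>0\}$ of Definition \ref{defmcperiod} must be identified with the gcd of the support of the inter-return law $(f_{n}(j,j))_{n\geq 1}$ that EFP uses; this holds because $\{n: f_{n}(j,j)>0\}\subset\{n: p_{n}(j,j)>0\}$ while every return time decomposes as a sum of first-return times, so the two gcds divide each other. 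Your closing observation --- that finiteness of $S$ is not needed for any single assertion but rules out null recurrence, making the hypothesis of (6) automatic on recurrent classes --- is correct and is in fact the reason the paper can apply this theorem freely in Facts 11--17.
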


\noindent For irreducible chains the qualitative behavior is uniform
for all points, leading to the following results:

\begin{theorem}
\label{mcconverge}Let $(X_{n})_{n\in\mathbb{N}}$ be an irreducible
Markov chain on the finite state space $S$. We fix $j\in S$, then
for all $i\in S$ it holds that

\begin{enumerate}
\item If $j$ has period $\delta$, then so has $i$.

\item If $j$ is transient (recurrent, positive recurrent), then so is $i$. In
fact, all states are either transient or positive recurrent.

\item $\mathbb{P}\{\underset{m\rightarrow\infty}{\lim}\frac{1}{m}%
N_{j}(m)=\frac{1}{\mu(j,j)}\mid X_{0}=i\}=1$.

\item $\underset{n\rightarrow\infty}{\lim}\frac{1}{n}\underset{k=1}%
{\overset{n}{\sum}}p_{k}(i,j)=\frac{1}{\mu(j,j)}$.

\item If $j$ is periodic of period $\delta$ then $\underset{n\rightarrow
\infty}{\lim}p_{n\delta}(i,j)=\frac{\delta}{\mu(j,j)}$ and
$\underset
{n\rightarrow\infty}{\lim}\frac{1}{\delta}\underset{k=n}{\overset{n+\delta
-1}{\sum}}p_{k}(i,j)=\frac{1}{\mu(j,j)}$, in particular for
aperiodic states
$j$ we have $\underset{n\rightarrow\infty}{\lim}p_{n}(i,j)=\frac{1}{\mu(j,j)}%
$. In all cases convergence is geometric with rate $r<1$, where
$r=\max \{\left\vert \lambda\right\vert $, $\lambda$ is an
eigenvalue of $P$ with $\left\vert \lambda\right\vert <1\}$, i.e.
the ergodicity coefficient of the transition matrix $P$.
\end{enumerate}
\end{theorem}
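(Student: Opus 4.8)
The plan is to read the statement as a package of classical \emph{solidarity} (class) properties together with the ergodic and renewal limit theorems, and to exploit irreducibility to transfer every property from the fixed state $j$ to an arbitrary state $i$, after which the whole analysis is driven by the regeneration structure generated by successive visits to $j$. I would organize the argument so that (1)--(2) are proved by a single communication estimate, (3) by the strong law of large numbers, (4) by taking expectations in (3), and (5) by the renewal theorem together with Perron--Frobenius theory for the rate.

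First I would establish that periodicity and the recurrence/transience dichotomy are class properties. Since the chain is irreducible, for any $i,j$ there are $m,n\ge 1$ with $p_m(i,j)>0$ and $p_n(j,i)>0$, and the elementary inequality $p_{m+k+n}(i,i)\ge p_m(i,j)\,p_k(j,j)\,p_n(j,i)$, valid for every $k\ge 0$ by (\ref{defCKequation}), is the only tool needed. For (1) it forces $\delta(i)$ to divide every $k$ with $p_k(j,j)>0$ once the contribution of $m+n$ is subtracted, whence $\delta(i)\mid\delta(j)$; symmetry gives equality. For (2) the same inequality yields $\sum_k p_k(i,i)\ge p_m(i,j)p_n(j,i)\sum_k p_k(j,j)$, so by the series criteria in items (1) and (3) of Theorem \ref{mcrectrans}, recurrence of $j$ forces recurrence of $i$, and symmetrically for transience. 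The exclusion of null recurrence is the one place where finiteness of $S$ is essential: since $\sum_{j\in S}\frac{1}{m}N_j(m)=1$ for every $m$ and $S$ is finite, not every state can satisfy $N_j(m)/m\to 0$, so at least one state is positive recurrent, and by the solidarity just proved all states are.

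Next I would build the renewal structure at $j$ for (3) and (4). Starting from $X_0=i$, the hitting time $\tau_j$ is a.s.\ finite (irreducibility plus recurrence give $F(i,j)=1$), and by the strong Markov property the inter-visit times $\tau_j^{(m+1)}-\tau_j^{(m)}$ are i.i.d.\ with mean $\mu(j,j)$. The strong law of large numbers gives $\tau_j^{(k)}/k\to\mu(j,j)$ a.s., and inverting this through the sandwich $\tau_j^{(N_j(m))}\le m<\tau_j^{(N_j(m)+1)}$ yields $N_j(m)/m\to 1/\mu(j,j)$ a.s., which is (3); the initial segment up to $\tau_j$ is a.s.\ finite and does not affect the limit. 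Part (4) then follows by taking conditional expectations: since $\mathbb{E}(N_j(m)\mid X_0=i)=\sum_{k=1}^m p_k(i,j)$ and $0\le N_j(m)/m\le 1$ is bounded, bounded convergence transfers the a.s.\ limit to $\frac1m\sum_{k=1}^m p_k(i,j)\to 1/\mu(j,j)$. For the pointwise statements in (5) I would start from the diagonal limit $p_n(j,j)\to 1/\mu(j,j)$ already recorded in item (5) of Theorem \ref{mcrectrans} and feed it into the first-passage decomposition (\ref{randomCKequation}): $p_n(i,j)=\sum_{r=1}^n f_r(i,j)\,p_{n-r}(j,j)$; a dominated-convergence argument on this convolution, using $\sum_r f_r(i,j)=F(i,j)=1$, gives $p_n(i,j)\to 1/\mu(j,j)$ in the aperiodic case. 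The periodic case of period $\delta$ is handled by passing to the cyclic classes of $j$ under $P^\delta$, on each of which the induced chain is aperiodic with mean return time $\mu(j,j)/\delta$, yielding $p_{n\delta}(j,j)\to\delta/\mu(j,j)$ and then the stated limits for $p_{n\delta}(i,j)$ and for the block averages $\frac1\delta\sum_{k=n}^{n+\delta-1}p_k(i,j)$.

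The main obstacle is part (5). The aperiodic pointwise limit is genuinely the Erd\H{o}s--Feller--Pollard renewal theorem, which is substantially deeper than the law-of-large-numbers and bounded-convergence arguments behind (3) and (4), and its periodic refinement requires the cyclic decomposition to be set up carefully so that the lattice structure of the return-time distribution is respected. The quantitative geometric rate is a separate, spectral statement: for a finite irreducible aperiodic stochastic matrix $P$ the eigenvalue $1$ is simple and dominant, and a Jordan-form analysis of $P^n$ shows that $p_n(i,j)$ approaches its limit at rate governed by $r$, the modulus of the largest subdominant eigenvalue (the ergodicity coefficient), with the periodic case reducing to this after restricting to $P^\delta$. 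Establishing simplicity of the eigenvalue $1$ and isolating the subdominant modulus needs the full strength of Perron--Frobenius theory rather than the purely probabilistic renewal estimates used elsewhere, so I would expect that to be the most technical portion of a complete proof.
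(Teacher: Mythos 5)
The paper itself offers no proof of Theorem \ref{mcconverge}: it is quoted as a standard result, with the reader referred to the textbooks \cite{M94, S99} (see the sentence ``We list some standard results \dots compare e.g., \cite{M94, S99} for the proofs''). So there is no internal argument to match your sketch against; what you have reconstructed is essentially the canonical textbook proof, and in outline it is correct. The solidarity inequality $p_{m+k+n}(i,i)\geq p_m(i,j)\,p_k(j,j)\,p_n(j,i)$ does give (1) and the recurrence/transience part of (2) via the series criteria of Theorem \ref{mcrectrans}; the regeneration-plus-SLLN argument gives (3); and bounded convergence applied to $0\leq N_j(m)/m\leq 1$, with $\mathbb{E}(N_j(m)\mid X_0=i)=\sum_{k=1}^m p_k(i,j)$, gives (4). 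Two points deserve tightening. First, positive-recurrence solidarity does not follow from the series inequality alone; the clean route is your finiteness argument (since $\sum_{j\in S}N_j(m)/m=1$ and $S$ is finite, (3) forces $\mu(j,j)<\infty$ for some, hence by Ces\`{a}ro solidarity all, states) --- and note that for a finite irreducible chain the same counting argument rules out transience outright, so ``transient or positive recurrent'' collapses to ``positive recurrent,'' which is what Theorem \ref{mcinvarmeasure}.3 records. Second, in (5) the subsequence limit $p_{n\delta}(i,j)\rightarrow\delta/\mu(j,j)$ holds only when $i$ lies in the cyclic class of $j$ (for a two-state chain of period $2$ one has $p_{2n}(i,j)=0$ for all $n$ when $i\neq j$, while $\delta/\mu(j,j)=1$); only the block average $\frac{1}{\delta}\sum_{k=n}^{n+\delta-1}p_k(i,j)$ converges unconditionally. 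This imprecision is already present in the paper's statement, but since you invoke the cyclic decomposition you should state the restriction explicitly. Finally, since $S$ is finite you could bypass the Erd\H{o}s--Feller--Pollard renewal theorem entirely: the Perron--Frobenius/Jordan-form analysis you use for the geometric rate already yields the pointwise limits in (5), so the spectral argument alone carries the whole of (5), which is the simplification finiteness buys.
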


\noindent For irreducible Markov chains the long term behavior for
$n\rightarrow\infty$ is described by ergodicity and stationarity.
Both types of behavior can be formulated using invariant measures of
the chain.

\begin{definition}
Let $(X_{n})_{n\in\mathbb{N}}$ be a Markov chain on the state space
$S$. A probability distribution $\pi^{\ast}$ on $S$ is called
invariant for the chain if $\pi^{\ast}=\mathcal{D}(X_{n})$ for all
$n\in\mathbb{N}$. Here $\mathcal{D}(\cdot)$ denotes again the
distribution of a random variable.
\end{definition}

\begin{remark}
If $(X_{n})_{n\in\mathbb{N}}$ is a Markov chain with transition
probability matrix $P$ on the finite state space $S=\{1,...,d\}$,
then the distribution $\pi^{\ast}$ on $S$ is invariant iff
$\pi^{\ast}\simeq(\pi_{k}^{\ast}$, $k=1,...,d)\in\mathbb{R}^{d}$
satisfies $\pi^{\ast T}=\pi^{\ast T}P$, i.e. if $\pi^{\ast}$ is a
left eigenvector of $P$ corresponding to the (real) eigenvalue $1$.
For irreducible chains this eigenvalue is a simple root of the
characteristic polynomial of $P$, and it is the only one with
absolute value equal to $1$.
\end{remark}

\begin{remark}
\label{mcstatsol}Note that a Markov chain $(X_{n})_{n\in\mathbb{N}}$
is (strictly) stationary (i.e., all its finite-dimensional
distributions are invariant under time shift) iff its initial
variable $X_{0}$ has distribution $\mathcal{D}(X_{0})=\pi^{\ast}$
for some invariant distribution $\pi^{\ast}$.
\end{remark}

\begin{theorem}
\label{mcinvarmeasure}Let $(X_{n})_{n\in\mathbb{N}}$ be an
irreducible Markov chain on the finite state space $S$ with
transition probability matrix $P$.

\begin{enumerate}
\item The Markov chain has a unique invariant distribution $\pi^{\ast}$\ on
$S$.

\item The invariant distribution $\pi^{\ast}\simeq(\pi_{k}^{\ast}$,
$k=1,...,d)\in\mathbb{R}^{d}$ satisfies
$\pi_{k}^{\ast}=\frac{1}{\mu(k,k)} $, where $\mu(k,k)$ denotes again
the mean first return (hitting) time from $k$ to $k$.

\item In particular, all states are positive recurrent.
\end{enumerate}
\end{theorem}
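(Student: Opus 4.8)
The plan is to construct the invariant distribution explicitly as $\pi_j^{\ast} := 1/\mu(j,j)$ (using the convention $1/\infty = 0$) and to verify all three claims from the Ces\`aro limit formula in Theorem \ref{mcconverge}, item 4, namely $\frac{1}{n}\sum_{k=1}^{n} p_k(i,j) \to 1/\mu(j,j)$ for all $i,j\in S$. Writing $a_n(i,j) = \frac{1}{n}\sum_{k=1}^{n} p_k(i,j)$, the entire argument rests on passing limits through finite sums over $S$, which is legitimate precisely because $S$ is finite.

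First I would settle positive recurrence (item 3). Since $S$ is finite and the chain occupies some state at every time step, the states cannot all be transient: by Theorem \ref{mcrectrans}, item 4, each transient state is visited only finitely often almost surely, yet the total number of visits over all states is infinite, so at least one state is recurrent. The dichotomy in Theorem \ref{mcconverge}, item 2, then forces every state to be positive recurrent. This already gives item 3, and it guarantees $\mu(j,j)<\infty$, hence $\pi_j^{\ast}>0$, for every $j$. That $\pi^{\ast}$ is in fact a probability distribution follows from summing the Ces\`aro averages: because each row of $P^{k}$ sums to one, $\sum_{j} a_n(i,j) = 1$ for every $n$, and moving the limit through the finite sum yields $\sum_{j} 1/\mu(j,j) = 1$.

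Next, for existence (item 1) I would verify invariance $\pi^{\ast T} = \pi^{\ast T} P$ directly from the averages. Starting from the Chapman--Kolmogorov relation $p_{k+1}(i,j) = \sum_{l} p_k(i,l)\,p_1(l,j)$ and averaging over $k=1,\dots,n$, the left-hand side equals $a_n(i,j)$ up to a boundary term of order $1/n$, while the right-hand side equals $\sum_{l} a_n(i,l)\,p_1(l,j)$. Letting $n\to\infty$ and again moving the limit through the finite sum over $l$ gives $\pi_j^{\ast} = \sum_{l} \pi_l^{\ast}\,p(l,j)$, which is exactly invariance.

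Finally, item 2 holds by the very definition of $\pi^{\ast}$, and uniqueness (completing item 1) follows by the same Ces\`aro device: if $\nu$ is any invariant distribution, then $\nu_j = \sum_{i}\nu_i\,p_k(i,j)$ for every $k$, hence $\nu_j = \sum_{i}\nu_i\,a_n(i,j)$ for every $n$, and taking $n\to\infty$ yields $\nu_j = \big(\sum_{i}\nu_i\big)\pi_j^{\ast} = \pi_j^{\ast}$, so $\nu = \pi^{\ast}$. (Alternatively one may invoke the fact, recorded after the definition of an invariant distribution, that for an irreducible chain the eigenvalue $1$ of $P$ is simple, so its normalized left eigenvector is unique.) The step I expect to be the genuine crux is ruling out null recurrence, i.e.\ ensuring the masses $1/\mu(j,j)$ are not all zero; this is where finiteness and irreducibility are indispensable, and in the self-contained version it is the summed Ces\`aro identity $\sum_{j} 1/\mu(j,j)=1$ that does the work. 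Everything else reduces to routine interchanges of a limit with a finite summation.
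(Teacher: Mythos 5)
Your proposal is correct. Note, however, that the paper does not prove this theorem at all: it is presented in the subsection that opens with ``we present without proofs some standard results in the theory of Markov chains,'' with the proofs delegated to the cited references \cite{M94, S99}. So there is nothing internal to compare against; what you have supplied is a derivation of Theorem \ref{mcinvarmeasure} from the other imported results, Theorems \ref{mcrectrans} and \ref{mcconverge}. As such it is a legitimate and clean reduction: the finite-state pigeonhole argument (the chain makes infinitely many visits in total, so not every state can have $N_j<\infty$ a.s., hence some state is recurrent, and the dichotomy of Theorem \ref{mcconverge}.2 upgrades all states to positive recurrent) is exactly the standard way to rule out the transient case, and your three uses of the Ces\`aro averages $a_n(i,j)$ --- normalization $\sum_j \pi_j^{\ast}=1$, invariance via averaged Chapman--Kolmogorov with an $O(1/n)$ boundary term, and uniqueness via $\nu_j=\sum_i \nu_i\,a_n(i,j)$ --- are all sound, with every limit-sum interchange justified by finiteness of $S$. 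The one caveat worth stating explicitly is that the genuine analytic content is concentrated in Theorem \ref{mcconverge}.4, which you take as given; in some textbook developments that Ces\`aro limit is itself proved \emph{from} the existence of the invariant distribution, so your argument is non-circular only under a development (e.g.\ via renewal theory or the strong law for return times) that establishes \ref{mcconverge}.4 independently --- which is indeed how \cite{M94, S99} proceed. Your fallback observation that the identity $\sum_j 1/\mu(j,j)=1$ by itself yields at least one positive recurrent state, after which irreducibility propagates positivity to all states, is a nice touch that makes the dependence on \ref{mcconverge}.2 lighter.
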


\subsection{Markov Chains, Graphs, and Semiflows}

Usually, presentations about finite state Markov chains develop the
theory for irreducible chains, i.e. the chain consists of one
communicating class. According to Theorems \ref{mcconverge} and
\ref{mcinvarmeasure}, the states of an irreducible chain behave
uniformly in their limit behavior and thus no coexistence of
transient and (positive) recurrent states is possible. We are
interested in studying the qualitative behavior of general finite
state Markov chains. Chapter 9.8 of \cite{R06} presents some results
from a matrix point-of-view. We will do so by using the results from
Section 3. In this section, we develop the mechanisms
that allow us to translate many of those results to the context of
Markov chains.

\noindent Let $(X_{n})_{n\in\mathbb{N}}$ be a Markov chain on the
finite state space $S$ with transition probability matrix $P$. We
associate with $P$ a weighted graph $G=(V,E,w)$, where $V=S$,
$(i,j)\in E$ iff $p(i,j)>0$, and $w:E\rightarrow\lbrack0,1]$ defined
by $w((i,j))=p(i,j)$. The adjacency matrix $A_{G}$ is defined as in
Definition \ref{defadjmatrix} for the graph $G=(V,E)$. Note that the
graph $G=(V,E)$ is automatically an $L-$graph. Vice versa, let
$G=(V,E)$ be a graph with a weight function $w:E\rightarrow
\mathbb{R}$ that satisfies the properties (i)
$w:E\rightarrow\lbrack0,1]$ and (ii) $\underset{j\in
V}{\sum}w((i,j))=1$ for all $i\in V$, then $G=(V,E,w)$ can be
identified with the probability transition matrix $P$ of a Markov
chain on the state space $V$\ via $p(i,j)=w((i,j))$.

\noindent With this construction, we can try to interpret all
concepts and results from Section \ref{orbits&cc} in the context of
Markov chains. Basically, the key ingredient in all proofs is the
following simple observation: Let $\gamma=\left\langle
i_{0},...,i_{n}\right\rangle $ be a path in $G=(V,E)$, then the
probability $\mathbb{P}(\gamma)$\ that this path occurs as a (finite
length) trajectory of the chain $(X_{n})_{n\in\mathbb{N}}$ with
transition probability matrix $P$ and initial distribution
$\pi_{0}$\ is given
by the joint probability%
\begin{align}
\mathbb{P}(\gamma)  & =\mathbb{P}\{X_{0}=i_{0},X_{1}=i_{1},...,X_{n-1}%
=i_{n-1},X_{n}=i_{n}\}\label{probpath}\\
& =\pi_{0}(i_{0})p(i_{0},i_{1})\cdot\cdot\cdot p(i_{n-2},i_{n-1}%
)p(i_{n-1},i_{n})\text{.}\nonumber
\end{align}
In particular, we obtain for any finite sequence $i_{0},...,i_{n}$
of vertices in $G$: $\left\langle i_{0},...,i_{n}\right\rangle $ is
a path in $G $ iff $p(i_{\alpha},i_{\alpha+1})>0$ for
$\alpha=0,...,n-1$.

\noindent This observation implies a probabilistic argument that we
will need for our results in the next section. It is closely related
to the no-cycle property of points for semiflows, compare Definition
\ref{defnocycle}.

\begin{lemma}
\label{leaking}Let $P$ be the transition probability matrix of a
Markov chain on the state space $S$, and let $G=(V,E)$ be its
associated graph. Consider a set $B\subset V$ and a point $i\in B$
with the properties

\begin{enumerate}
\item There exists a path $\gamma\in\Gamma^{n}$ with $\pi_{0}(\gamma)=i$ and
$\pi_{n}(\gamma)=:k\notin B$ for some $n\geq1$,

\item $\mathcal{O}^{+}(k)\cap B=\varnothing$.
\end{enumerate}

\noindent Then $\underset{n\rightarrow\infty}{\lim}p_{n}(i,j)=0$ for
all $j\in B$ uniformly at a geometric rate.
\end{lemma}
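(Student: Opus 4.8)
The plan is to recast the statement as an absorption problem and then reduce it to the classical geometric decay of transition probabilities among transient states. First I would introduce the \emph{trap} $D:=\{k\}\cup\mathcal{O}^{+}(k)$. By the definition of the positive orbit, $D$ is forward invariant (once the chain enters $D$ it can never leave it), and hypothesis (2) gives $D\cap B=\varnothing$. Consequently $X_{m}\in B$ forces the trajectory not to have entered $D$ up to time $m$. Since $p_{m}(i,j)=0$ whenever $j\notin\mathcal{O}^{+}(i)$, it suffices to treat the finitely many $j\in B$ that are accessible from $i$ and to bound each such $p_{m}(i,j)$ uniformly.

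The key step, and the one I expect to be the main obstacle, is to show that every $j\in B$ accessible from $i$ is a \emph{transient} state of the chain. Here both hypotheses and the communicating structure of $B$ enter: by (1) and the path--probability formula (\ref{probpath}), $i$ has access to $k$; and since every $j\in B$ has access to $i$, transitivity shows that $j$ has access to $k$. On the other hand (2) says $k$ has no access to $B$, hence none to $j$. If $j$ were recurrent then $F(j,j)=1$ (Definition~\ref{defrecurmc}); but a recurrent state cannot have access to a state from which it is never reached, since a single excursion through $k$ would occur with positive probability and never return, contradicting $F(j,j)=1$. Therefore $j$ is transient. This is precisely the place where one needs every relevant point of $B$ to access the escape vertex $k$; in the intended applications $B$ is a communicating class, so $j\sim i$ holds for all $j\in B$ by Theorem~\ref{cc=orbit}. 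The same reasoning applied to $i$ itself (which has access to $k$ but is not reached from $k$) shows $i$ is transient as well.

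Finally I would extract the geometric rate from the transient block of $P$. Let $T\subset S$ denote the set of all transient states and write $P$ in block form with the substochastic block $Q:=\bigl(p(u,v)\bigr)_{u,v\in T}$ in the upper-left corner; because every recurrent communicating class is closed, there are no transitions from the recurrent part back into $T$, so the block below $Q$ vanishes. For transient $u,v$ every length-$m$ path from $u$ to $v$ stays inside $T$ (a path leaving $T$ lands in a closed recurrent class and cannot return to $v$), whence $p_{m}(u,v)=(Q^{m})_{uv}$. By Theorem~\ref{mcrectrans}(4) we have $\sum_{m\ge 1}(Q^{m})_{uv}=\mathbb{E}(N_{v}\mid X_{0}=u)<\infty$ for all $u,v\in T$, so the Neumann series $\sum_{m}Q^{m}$ converges and the spectral radius of $Q$ satisfies $\rho(Q)<1$. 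Hence there are $C>0$ and $r<1$ (any $r$ with $\rho(Q)<r<1$) such that $\Vert Q^{m}\Vert\le C\,r^{m}$, and since $i$ and every accessible $j\in B$ lie in $T$,
\[
p_{m}(i,j)=(Q^{m})_{ij}\le C\,r^{m}\longrightarrow 0\quad(m\to\infty),
\]
while $p_{m}(i,j)=0\le C\,r^{m}$ for the $j\in B$ not accessible from $i$. Since the bound $C\,r^{m}$ is independent of $j$, this gives convergence to $0$, uniformly over $j\in B$ and at a geometric rate, completing the argument.
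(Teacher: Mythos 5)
Your route is genuinely different from the paper's. The paper argues directly with the escape path: by (\ref{probpath}) the path $\gamma$ gives $p_{n}(i,k)\geq\rho$, hence $\sum_{j\in B}p_{n}(i,j)\leq 1-\rho$, and it then iterates the Chapman--Kolmogorov equation, using $\mathcal{O}^{+}(k)\cap B=\varnothing$ to annihilate the term through $k$, to obtain $p_{mn}(i,j)\leq(1-\rho)^{m}$ and interpolate over intermediate times. You instead reduce to the classical theory of transient states: establish transience, pass to the substochastic block $Q$ indexed by the transient states, deduce $\rho(Q)<1$ from the entrywise summability supplied by Theorem \ref{mcrectrans}(4), and read off the uniform geometric bound from $\Vert Q^{m}\Vert\leq Cr^{m}$. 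Your treatment of $i$ itself is clean and uses only the stated hypotheses: $i$ reaches $k$ with positive probability, hypothesis (2) forbids any return to $B\ni i$, so $F(i,i)<1$ and $i$ is transient by Definition \ref{defrecurmc}.

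The gap you flagged is real, and it is worth locating precisely. Transience of an arbitrary $j\in B$ accessible from $i$ does not follow from hypotheses (1)--(2): nothing in the statement forces $j$ to have access to $i$ or to $k$, and your sentence \textquotedblleft since every $j\in B$ has access to $i$\textquotedblright\ imports an assumption the lemma does not make. In fact the lemma as literally stated is false: take $S=\{i,j,k\}$, $B=\{i,j\}$, $p(i,j)=p(i,k)=\tfrac{1}{2}$, $p(j,j)=p(k,k)=1$. Both hypotheses hold ($\langle i,k\rangle$ is a path of length $1$ and $\mathcal{O}^{+}(k)=\{k\}$ is disjoint from $B$), yet $p_{m}(i,j)=\tfrac{1}{2}$ for all $m\geq1$. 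So the condition you identified --- every relevant point of $B$ must reach an escape vertex with no return --- is exactly the missing hypothesis, not a defect peculiar to your proof; the paper's own computation commits the equivalent sin at the step $\sum_{l\neq k}p_{n}(l,j)\leq 1-\rho$, where the row-wise escape estimate that hypothesis (1) provides only for $l=i$ is silently applied to every intermediate state $l$ (indeed a column sum is bounded as if it were a row sum). In the paper's actual applications (Facts 11 and 16, where $B$ is the complement of the union of the maximal communicating classes, not a communicating class as you guessed) every state of $B$ does admit such an escape path by Remark \ref{ficcinorbit}, so there every $j\in B$ is transient and your $Q$-block argument delivers the uniform geometric decay; and for the conclusion $p_{n}(i,i)\rightarrow0$ needed in Fact 11, your proof is already complete as written, since only the transience of $i$ is used.
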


\begin{proof}
Denote $\mathbb{P}(\gamma)=\rho$, then\ by (\ref{probpath}) we have
$p_{n}(i,k)\geq\rho$ and hence
\[
\underset{j\in B}{\sum}p_{n}(i,j)\leq1-\rho\text{.}%
\]
Using the Chapman-Kolmogorov equation we
compute for all $j\in B$
\begin{align*}
p_{2n}(i,j)  & =\underset{l\in S}{\sum}p_{n}(i,l)p_{n}(l,j)\\
& =\underset{l\neq k}{\sum}p_{n}(i,l)p_{n}(l,j)+p_{n}(i,k)p_{n}(k,j)\\
& \leq\underset{l\neq k}{\sum}p_{n}(i,l)\underset{l\neq k}{\sum}p_{n}(l,j)+0\\
& \leq(1-\rho)\underset{l\neq k}{\sum}p_{n}(l,j)\\
& \leq(1-\rho)^{2}\text{.}%
\end{align*}
Repeating this argument we obtain for all $m\geq1$%
\[
p_{mn}(i,j)\leq(1-\rho)^{m}\text{.}%
\]
By assumption 2 of the lemma, we see that
$p_{mn+\alpha}(i,j)\leq(1-\rho)^{m}$ for $0\leq\alpha\leq n-1$,
which proves the assertion. Note that the argument above even shows
$\underset{n\rightarrow\infty}{\lim}\underset{j\in B}{\sum
}p_{n}(i,j)=0$ at the geometric rate $(1-\rho)$.
\end{proof}

\noindent Next we comment briefly on the connection between Markov
chains and semiflows defined by products of matrices, i.e. {\bf linear
iterated function systems}. The standard connection is constructed as follows: Let $(X_{n}%
)_{n\in\mathbb{N}}$ be a Markov chain on the finite state space
$S=\{1,...d\}$ with transition probability matrix $P$. We identify
the probability measures on $S$ with the set of probability vectors
in $\mathbb{R}^{d}$, defined as
$\mathcal V=\{v\in\mathbb{R}^{d}$, $v_{i}\geq0$ for all $i=1,...,d$ and $%
{\textstyle\sum}
v_{i}=1\}$. Then $\Upsilon:\mathbb{N}\times \mathcal V\rightarrow \mathcal V$, defined
by $\Upsilon(n,v)=(v^{T}P^{n})^{T}$ is a semiflow. If
$v_{0}=\mathcal{D}(X_{0})$, this semiflow describes the evolution of
the $1-$dimensional (and the $n-$dimensional) distributions of the
Markov chain, thanks to the Chapman-Kolmogorov equation. Vice versa, if $\Upsilon$ is a linear,
iterated function system on $\mathbb{R}^{d}$ that leaves $\mathcal V$
invariant, then $\Upsilon$ can be interpreted as a Markov chain on
the state space $S=\{1,...,d\}$. In standard texts, many results
about finite state Markov chains are proved using this connection.

\noindent Our discussion in Section \ref{matrix} suggests another
matrix semiflow associated to a Markov chain, namely the semiflow
$\Psi :\mathbb{N}\times\mathcal{Q}^{d}\rightarrow\mathcal{Q}^{d}$,
$\Psi
(n,q)=(q^{T}\cdot^{\ast}A^{n\ast})^{T}$ on the unit cube $\mathcal{Q}%
^{d}\subset\mathbb{R}^{d}$, compare Definition \ref{defmatrixflow}.
Here $A\in M(d,\{0,1\})$ is defined by $a_{ij}=1$ if $p_{ij}>0$, and
$a_{ij}=0$ otherwise. This semiflow does not propagate the
distributions of the Markov chain, just its reachability or
$\{0,1\}-$structure. It follows from Section \ref{matrix} that all
concepts and results for $L-$graphs can be interpreted in terms of
this semiflow, e.g. Equation (\ref{probpath}) and Lemma
\ref{leaking} have obvious translations to the context of $\Psi$.

\noindent Finally we consider semiflows on power sets: Let $(X_{n}%
)_{n\in\mathbb{N}}$ be a Markov chain on the finite state space
$S=\{1,...d\}$ with transition probability matrix $P$. Let 
$\mathcal{P}(S)$ be the power set of $S$, define the semiflow
$\Phi:\mathbb{N}\times\mathcal{P}(S)\rightarrow\mathcal{P}(S)$,
through $\Phi(n,A)=\{j\in S$, there exists $i\in A$ such that $p_{n}%
(i,j)>0\}$. The flow $\Phi$ is of the type (\ref{defflow+}) and
hence it satisfies all the properties studied in Sections
\ref{graphsemiflow}-\ref{arsemiflow}. In particular, Equation
(\ref{probpath}) and Lemma \ref{leaking} have obvious translations
to the context of $\Phi$.

\noindent Note that, in contrast to the semiflow $\Upsilon$ (and the
weighted graph $(V,E,w)$), the semiflows $\Psi$ and $\Phi$ (and the
graph $G=(V,E)$) do not, by definition, carry all the statistical
(or distributional) information of the Markov chain
$(X_{n})_{n\in\mathbb{N}}$, just the information about certain basic
events (such as paths) occurring with probability zero or with
positive probability. Hence when using this graph or these semiflows
to analyze the Markov chain, we can only hope for some statements of
the kind "an event defined by the Markov chain has positive
probability or probability $0$". We will see in the next section
that the graph and the semiflows do characterize a surprisingly wide
array of properties of the Markov chain.

\subsection{Characterization of Markov Chains via Graphs and
Semiflows\label{dictionary}}

In this section let $(X_{n})_{n\in\mathbb{N}}$ be a Markov chain on
the finite state space $S=\{1,...d\}$ with transition probability
matrix $P$. When we talk about probability measures on $S$ we always
think of $S$ as endowed with the discrete $\sigma-$algebra
$\mathcal{P}(S)$. Associated with
$(X_{n})_{n\in\mathbb{N}}$ are the graph $G=(S,E)$ and the semiflows
$\Phi:\mathbb{N}\times\mathcal{P}(S)\rightarrow\mathcal{P}(S)$ and $\Psi
:\mathbb{N}\times\mathcal{Q}^{d}\rightarrow\mathcal{Q}^{d}$, as
defined in the previous section. The goal of this section is to
develop a ``three-column dictionary" of equivalent objects and
results in the three languages of Markov chains, graphs, and
semiflows. To avoid confusion between the semiflows
$\Upsilon:\mathbb{N}\times \mathcal V\rightarrow \mathcal V$, defined on the set of
probability vectors in $\mathbb{R}^{d}$ and containing all the
probabilistic information of the chain, and
$\Psi:\mathbb{N}\times\mathcal{Q}^{d}\rightarrow \mathcal{Q}^{d}$,
defined on the unit cube in $\mathbb{R}^{d} $ and containing only
the reachability information of the chain, we will formulate our
observations in terms of the semiflow $\Phi$. The results
immediately carry over to $\Psi$ using the correspondence from
Section \ref{matrix}.

\subsubsection{Paths, Orbits, Supports of Transition Probabilities, and First
Hitting Times}

For a probability measure $\mu$ on $(S,\mathcal P(S))$ the support
supp$\mu$ is defined as the smallest subset $S^{\prime}\subset S$
such that $\mu(S^{\prime })=1$. Note that the $n-$step transition
probabilities $p_{n}(\cdot,\cdot)$
define probability measures $P(n,i,\cdot)$ on $S$ via $P(n,i,A):=%
{\textstyle\sum\nolimits_{j\in A}}
p_{n}(i,j)$. That is, each row of the matrix $P^{n}$ ``is" a probability
measure for all $n\in\mathbb{N}$. \vspace{0.25in}

\noindent\textbf{Fact 1:} A finite sequence of points
$(i_{0},...,i_{n})$ in $S$ is a path of $G$ iff
$p_{n}(i_{0},i_{n})>0$ iff $i_{n}\in\Phi (n,\{i_{0}\})$. Each of
these statements is equivalent to $i_{n}\in$ supp$P(n,i_{0},\cdot)$.

\noindent The proof of this fact follows directly from
(\ref{probpath}). An immediate consequence of this fact
is:\vspace{0.25in}

\noindent\textbf{Fact 2:} Fix a point $i\in S$. Then for $j\in S$ we
have: $j\in\mathcal{O}^{+}(i)$ iff $j\in$ supp$P(n,i,\cdot)$ for
some $n\geq1$ iff
$j\in%
{\textstyle\bigcup\nolimits_{n\geq1}}
\Phi(n,\{i\})$.\vspace{0.25in}

\noindent\textbf{Fact 3:} A subset $A\subset S$ is forward invariant
for $G$ iff $A$ is stochastically closed for the Markov chain iff
$\Phi(n,A)\subset A$ for all $n\geq1$.

\noindent The proof of this fact follows directly from Fact 2. This
fact also allows us to characterize certain properties of first
hitting times for Markov chains.\vspace{0.25in}

\noindent\textbf{Fact 4:} Let $i\in S$ and $A\subset S$. The first
hitting time distribution $(f_{n}(i,A)$, $n\geq1)$ satisfies for any
$n\in\mathbb{N}$: $f_{n}(i,A)>0$ iff there exists a path
$\gamma\in\Gamma^{n}$ with $\pi _{0}(\gamma)=i$, $\pi_{n}(\gamma)\in
A$, and $\pi_{m}(\gamma)\notin A$ for all $m=1,...,n-1$.
Furthermore, $F(i,A)>0$ iff $\mathcal{O}^{+}(i)\cap
A\neq\varnothing$.

\noindent This result follows immediately from (\ref{probpath}) and
Fact 2. The definition of the semiflow $\Phi$ in Equation
(\ref{defflow+}) allows directly for an equivalent statement in
terms of $\Phi$. Note that we did not define ``first hitting times"
for graphs or semiflows because this concept is hardly, if ever,
used in graph theory. However, if we define for the graph $G=(S,E)$
the notion $\sigma_{A}(i):=\inf\{n\geq1$, there exists $\gamma
\in\Gamma^{n}$ with $\pi_{0}(\gamma)=i$ and $\pi_{n}(\gamma)\in A\}$
as the first hitting time of a set $A\subset S$ for paths starting
in $i\in S$, then we obviously have from Fact 4 that
$\sigma_{A}(i)=n$ implies $f_{n}(i,A)>0$, but the converse is, in
general, not true.

\noindent As a final observation for this section, we note that
Definition \ref{defmcperiod} of the period of a state $i\in S$ only
uses the property of $p_{n}(i,i)>0$. Hence according to Fact 1, this
is really a pathwise property of the graph $G$: Consider the graph
$G=(S,E)$ and a vertex $i\in S$, we define the period of $i$ as
$\eta(i)=\gcd\{n\geq1$, there exists a path $\gamma\in\Gamma^{n}$
with $\pi_{0}(\gamma)=i$ and $\pi_{n}(\gamma)=i\}$. Then, by Fact 1,
we have $\eta(i)=\delta(i)$, the period of $i$ as a state of the
Markov chain. It seems, however, that periods of vertices are
rarely, if ever, used in graph theory.


\subsubsection{Communication and Communicating Classes}\label{nonref}

We have defined communicating classes for graphs in Definition
\ref{defcc} and for Markov chains after Definition
\ref{defmccommun}. These definitions differ slightly: For graphs we
required paths of length $n\geq1$, while we followed standard
practice for Markov chains and allowed $n=0$. The Markov chain
practice renders the communication relation $\sim$ an equivalence
relation, but it also leads to trivial communicating classes of the
form $\{i\}$ with $p_{n}(i,i)=0$ for all $n\geq1$. In the context of
graphs we called such vertices transitory, compare Definition
\ref{deftransitory}. Hence we obtain the following
results.\vspace{0.25in}

\noindent\textbf{Fact 5:} Consider two points $i,j\in S$. $i$ and
$j$ communicate via the graph $G$ iff $i$ and $j$ communicate via
the Markov chain and $p_{n}(i,j)>0$ and $p_{m}(j,i)>0$ for some
$n,m\geq1$.\vspace{0.25in}

\noindent\textbf{Fact 6:} A point $i\in S$ is a transitory vertex of
the the graph $G$ iff $p_{n}(i,i)=0$ for all $n\geq1$.
\vspace{0.25in}

\noindent\textbf{Fact 7:} A set $A\subset S$ is weakly invariant for
the semiflow $\Phi$ iff for all $n\in\mathbb{N}$ there exist
$i_{n},j_{n}\in A$ with $p_{n}(i_{n},j_{n})>0$. \vspace{0.25in}

\noindent\textbf{Fact 8:} A subset $C\subset S$ is a communicating
class of the graph $G$ iff $C$ is a nontrivial communicating class
of the Markov chain iff $C$ is a finest Morse set of the semiflow
$\Phi$.\vspace{0.25in}

\noindent\textbf{Fact 9:} A subset $C\subset S$ is a maximal
communicating class of the graph $G$ iff $C$ is a stochastically
closed communicating class of the Markov chain iff $C$ is a minimal
(with respect to set inclusion) attractor of $\Phi$.\vspace{0.25in}

\noindent\textbf{Fact 10:} A point $i\in S$ satisfies $i\in C$ for
some communicating class $C$ of the graph $G$ iff there exists a
sequence $n_{k}\rightarrow\infty$ with $p_{n_{k}}(i,i)>0$ iff
$i\in\omega(A)$, the $\omega-$limit set of some $A\subset S$ under
the semiflow $\Phi$ iff $\{i\}$ is a recurrent point of $\Phi$.

\noindent As we will see below, recurrence for semiflows (as defined
in Definition \ref{defrecur+}) and for Markov chains (as defined in
Definition \ref{defrecurmc}) are two different concepts, as are the
concepts of transitory points of semiflows and transient points of
Markov chains.

\subsubsection{Recurrence, Transience, and Invariant Measures for Markov
Chains\label{rtimformc}}

Let $M\subset S$ be a stochastically closed set for Markov chain
$(X_{n})_{n\in\mathbb{N}}$ with transition probability matrix $P$.
The
restriction of $P$ to $M$ defines a new Markov chain $(X_{n}^{M}%
)_{n\in\mathbb{N}}$, whose transition probability matrix we denote
by $P^{M}$. Since $M$ is stochastically closed, we have for all
$i,j\in M$ that $p_{n}(i,j)=p_{n}^{M}(i,j)$ holds for all
$n\in\mathbb{N}$. Therefore we conclude from Theorem
\ref{mcrectrans}: $i\in M$ is a transient (recurrent, positive
recurrent) point for $X$ iff it is transient (recurrent, positive
recurrent) for the restricted chain $X^{M}$. And Theorems \ref{mcconverge}%
\ and \ref{mcinvarmeasure} imply that $\mu$ is an invariant
probability measure of $X$ with supp$\mu\cap M\neq\varnothing$ iff
$\mu$ is an invariant probability measure for $X^{M}$. With these
preparations we can characterize the long term behavior of states in
a Markov chain:\vspace{0.25in}

\noindent\textbf{Fact 11:} A point $i\in S$ is a transient point of
the Markov chain $(X_{n})_{n\in\mathbb{N}}$ iff $i\in
S\backslash\cup C$, $C$ a maximal communicating class of the graph
$G=(S,E)$.

\begin{proof}
Maximal communicating classes $C$ of $G$ are forward invariant by
Remark \ref{graphsum}. Hence the Markov chain $X^{C}$ is irreducible
and all states $i\in C$ are (positive) recurrent by Theorem
\ref{mcinvarmeasure}.3. From the observation above we then have that
$i$ is (positive) recurrent for $X$, which proves the $\Rightarrow$
direction. Vice versa, if $i\in S\backslash\cup C$, then Lemma
\ref{leaking} with $B=\cup C$ implies $\underset{n\rightarrow
\infty}{\lim}p_{n}(i,i)=0$ at a geometric rate. Hence $\underset
{n\in\mathbb{N}}{\sum}p_{n}(i,i)<\infty$ and therefore the state $i$
is transient by Theorem \ref{mcrectrans}.3.
\end{proof}

\noindent Note that the transitory points for the graph $G=(S,E)$
are a subset of the transient states of the Markov chain: Only those
points are transitory that are not element of any communicating
class.\vspace{0.25in}

\noindent\textbf{Fact 12:} A point $i\in S$ is a recurrent (and
positive recurrent) point of the Markov chain
$(X_{n})_{n\in\mathbb{N}}$ iff $i\in\cup C$, $C$ a maximal
communicating class of the graph $G=(S,E)$.

\noindent The proof of this fact is immediate from Fact 11 and
Theorems \ref{mcrectrans} and \ref{mcinvarmeasure}.3. Note that the
semiflow concept of recurrence is different from that for Markov
chains: Under the semiflow $\Phi$ the points in any communicating
class are exactly the $\Phi-$recurrent points.

\noindent We now turn to invariant distributions of the Markov chain
$(X_{n})_{n\in\mathbb{N}}$. If $\mu_{1}$ and $\mu_{2}$ are two
invariant probability measures of a Markov chain, then any convex
combination $\mu=\alpha\mu_{1}+(1-\alpha)\mu_{2}$ for
$\alpha\in\lbrack0,1]$ is obviously again an invariant probability
measure. An invariant probability measure $\mu$ is called extreme if
it cannot be written as a convex combination $\mu
=\alpha\mu_{1}+(1-\alpha)\mu_{2}$ of two different invariant
probability measures $\mu_{1}\neq\mu_{2}$ for
$\alpha\in(0,1)$.\vspace{0.25in}

\noindent\textbf{Fact 13:} Each maximal communicating class
$C_{\nu}$ of the graph $G=(S,E)$ is the support of exactly one
invariant distribution $\mu _{\nu}$ of the Markov chain
$(X_{n})_{n\in\mathbb{N}}$. The chain has exactly $l$ extreme
invariant distributions $\mu_{1},...\mu_{l}$, where $l$ is the
number of maximal communicating classes of the graph $G$. Any
invariant distribution $\mu$ of the chain is a convex combination
$\mu=\underset{\nu =1}{\overset{l}{\sum}}\alpha_{\nu}\mu_{\nu}$ with
$\sum\alpha_{\nu}=1$.

\begin{proof}
Maximal communicating classes $C_{\nu}$ of $G$ are stochastically
closed, hence the Markov chain restricted to $C_{\nu}$\ is
irreducible and therefore $C_{\nu}$\ is the support of a unique
invariant distribution $\mu_{\nu}$ by Theorem
\ref{mcinvarmeasure}.1. Now the other claims follow directly from
Fact 9. \vspace{0.25in}
\end{proof}

\noindent\textbf{Fact 14:} The Markov chain
$(X_{n})_{n\in\mathbb{N}}$ has a
unique (in the distributional sense) stationary solution $(X_{n}^{\nu}%
)_{n\in\mathbb{N}}$ on each maximal communicating class $C_{\nu}$ of
the graph $G=(S,E)$, obtained by taking initial random variables
$X_{0}$ with distribution $\mathcal{D}(X_{0})=\mu_{\nu}$. All other
stationary solutions are convex combinations of the $X^{\nu}$,
$\nu=1,...,l$.

\noindent This fact is a combination of Remark \ref{mcstatsol} and
Fact 13.

\subsubsection{Global Behavior and Multistability}

This section is devoted to the convergence behavior of a Markov
chain $(X_{n})_{n\in\mathbb{N}}$ as $n\rightarrow\infty$. We have
already characterized the transient and (positive) recurrent points.
It remains to clarify how the chain behaves starting in any of these
points. When one deals with difference (or differential) equations,
one first looks for fixed points (and other simple limit sets) and
then tries to find the initial values from which the system
converges towards these fixed points (or more generally, limit
sets). The study of the global behavior of dynamical systems
clarifies these issues, compare, e.g., \cite{AH06} for the case of
flows, and Section 3 for an adaptation to
specific semiflows related to $L-$graphs and to Markov chains.

\noindent Markov chains, in general, do not have fixed points.
According to Fact 10, their long term behavior is determined by the
communicating classes. And Fact 12 suggests that the maximal
communicating classes determine the behavior of a Markov chain
$(X_{n})_{n\in\mathbb{N}}$ as $n\rightarrow\infty$. The following
facts show that this is, indeed, true. The special case of a fixed
point is recovered when a maximal communicating class consists of
exactly one state; such states are called absorbing.\vspace{0.25in}

\noindent\textbf{Fact 15:} Consider the Markov chain $(X_{n})_{n\in\mathbb{N}%
}$ with transition probability matrix $P$, initial random variable
$X_{0}$ and initial distribution $\mathcal{D}(X_{0})=\pi_{0}$. Let
$C$ be a maximal communicating class of the graph $G=(S,E)$ with
invariant probability $\mu^{\ast}$. Assume that $\pi_{0}$ is
concentrated on $C$, i.e. $\pi_{0}(i)>0
$ iff $i\in C$. Then $$\underset{n\rightarrow\infty}{\lim}\frac{1}{n}%
\underset{k=1}{\overset{n}{\sum}}\mathcal{D}(X_{k})=\mu^{\ast}$$ in
the distributional sense, i.e. as vectors in $\mathbb{R}^{d}$. If,
furthermore, $C$ has period $\delta$ then
$$\underset{n\rightarrow\infty}{\lim}\frac
{1}{\delta}\underset{k=n}{\overset{n+\delta-1}{\sum}}\mathcal{D}(X_{k}%
)=\mu^{\ast}$$

\noindent In particular for $C$ aperiodic we have $\underset
{n\rightarrow\infty}{\lim}\mathcal{D}(X_{n})=\mu^{\ast}$. All
convergences are at a geometric rate. This fact follows immediately from Theorems
\ref{mcconverge}.5 and \ref{mcinvarmeasure}.2, and the remarks at
the beginning of Section \ref{rtimformc}.\vspace{0.25in}

\noindent\textbf{Fact 16:} Consider the Markov chain $(X_{n})_{n\in\mathbb{N}%
}$. Let $C=\cup C_{\nu}$, $C_{\nu}$ a maximal communicating class of
the graph $G=(S,E)$, and $D:=S\backslash C$. Pick $i\in D$. Then the
first hitting time $\tau_{C}$ satisfies
$\mathbb{P}\{\tau_{C}<\infty\mid X_{0}=i\}=1$ and
$\mathbb{E}(\tau_{C})<\infty$.

\begin{proof}
Assume, under the given assumptions, that
$\mathbb{P}\{\tau_{C}<\infty\mid X_{0}=i\}<1$, i.e.
$\mathbb{P}\{\tau_{C}=\infty\mid X_{0}=i\}>0$. Then there exists a
state $j\in S\backslash C$ with $P\{N_{j}=\infty\mid X_{0}=j\}>0$.
By the characterization from Theorem \ref{mcrectrans}.4 the state
$j$ cannot be transient, and hence by the dichotomy in Theorem
\ref{mcconverge}.2 $j$ is recurrent, which contradicts Fact 12.
Hence $\mathbb{P}\{\tau_{C}<\infty\mid X_{0}=i\}=1$ and then
$\mathbb{E}(\tau_{C})<\infty$ follows from geometric convergence in
Lemma \ref{leaking}.
\end{proof}

\noindent We define the probability that the chain, starting in
$D=S\backslash
C$, hits the maximal communicating class $C_{\nu}$ by $p_{\nu}:=\mathbb{P}%
\{X_{\tau_{C}}\in C_{\nu}\mid X_{0}\in D\}$. Note that $\sum
p_{\nu}=1$. Then the long-term behavior of
$(X_{n})_{n\in\mathbb{N}}$ with $\mathcal{D}(X_{0})$ concentrated in
$D$ is as follows: $X_{n}$ will leave the set $D$ of transient
states in finite time (even with finite expectation) and enter into
the set $C$ of (positive) recurrent points, where it may enter one
or more of the $C_{\nu}$'s depending on whether $p_{\nu}$ is
positive or not. By the random version of the Chapman-Kolmogorov
equation (\ref{randomCKequation}), the process continues as a Markov
chain, and hence in each $C_{\nu}$ follows the behavior described in
Fact 15. In particular, if all $C_{\nu}$ are aperiodic, we
obtain:\vspace{0.25in}

\noindent\textbf{Fact 17:} Consider the Markov chain $(X_{n})_{n\in\mathbb{N}%
}$ with maximal communicating classes $C_{1},...,C_{l}$ and extreme
invariant measures $\mu_{1},...,\mu_{l}$. Assume that all $C_{\nu}$,
$\nu=1,...,l$ are aperiodic. Then we have as limit behavior of the
chain $$\underset
{n\rightarrow\infty}{\lim}\mathcal{D}(X_{n})=\overset{l}{\underset{\nu=1}%
{\sum}}p_{\nu}\mu_{\nu}$$

 \noindent where the $p_{\nu}$ are as defined above.

\noindent This fact leads to the definition of multistable states: A
state $i\in S$ is called multistable for the Markov chain
$(X_{n})_{n\in\mathbb{N}} $ if there exist maximal communicating
classes $C_{1}$ and $C_{2}$ with $C_{1}\neq C_{2}$ such that
$p_{\nu}>0$ for $\nu=1,2$. We now obtain the following fact as a
criterion for the existence of multistable states. 

%

\noindent\textbf{Fact 18:} Let $(X_{n})_{n\in\mathbb{N}}$ be a
Markov chain with connected graph $G$. Then the chain has
multistable states iff $G$ has at least two maximal communicating
classes.


\noindent Multistable, and specifically bistable states play an
important role in many applications of stochastic processes in the
natural sciences and in engineering see \cite{CRK96}.

{}

\end{document}